\documentclass{amsart}

\usepackage{amsmath,amssymb}
\usepackage{mathrsfs}
\usepackage{mathtools}
\usepackage{stmaryrd}
\usepackage{enumerate}
\usepackage{tikz-cd}
\usepackage[all]{xy}
\usepackage{aliascnt}
\usepackage[colorlinks=true, linkcolor=blue, citecolor=blue]{hyperref}

\usepackage{enumerate}

\usepackage{combelow}

\newtheorem{maintheorem}{Theorem}

\newaliascnt{maincorollary}{maintheorem}
\newtheorem{maincorollary}[maincorollary]{Corollary}
\aliascntresetthe{maincorollary}

\newtheorem{theorem}{Theorem}

\newaliascnt{lemma}{theorem}
\newtheorem{lemma}[lemma]{Lemma}
\aliascntresetthe{lemma}

\newaliascnt{corollary}{theorem}
\newtheorem{corollary}[corollary]{Corollary}
\aliascntresetthe{corollary}

\newaliascnt{proposition}{theorem}
\newtheorem{proposition}[proposition]{Proposition}
\aliascntresetthe{proposition}

\newaliascnt{conjecture}{theorem}

\aliascntresetthe{conjecture}

\newaliascnt{question}{theorem}

\aliascntresetthe{question}

\theoremstyle{definition}

\newaliascnt{definition}{theorem}
\newtheorem{definition}[definition]{Definition}
\aliascntresetthe{definition}

\newaliascnt{remark}{theorem}
\newtheorem{remark}[remark]{Remark}
\aliascntresetthe{remark}

\newaliascnt{example}{theorem}

\aliascntresetthe{example}

\newaliascnt{notation}{theorem}
\newtheorem{notation}[notation]{Notation}
\aliascntresetthe{notation}

\newtheorem*{acknowledgements}{Acknowledgements}

\newif\ifhascomments \hascommentstrue
\ifhascomments
  \newcommand{\matt}[1]{{\color{red}[[\ensuremath{\spadesuit\spadesuit\spadesuit} #1]]}}
  \newcommand{\jeremy}[1]{{\color{red}[[\ensuremath{\clubsuit\clubsuit\clubsuit} #1]]}}
\else
  \newcommand{\matt}[1]{}
  \newcommand{\jeremy}[1]{}
\fi

\renewcommand{\setminus}{\smallsetminus}

\newcommand{\Z}{\mathbb{Z}}

\newcommand{\Q}{\mathbb{Q}}
\newcommand{\C}{\mathbb{C}}
\newcommand{\R}{\mathbb{R}}

\newcommand{\cX}{\mathcal{X}}
\newcommand{\cY}{\mathcal{Y}}
\newcommand{\cZ}{\mathcal{Z}}
\newcommand{\cW}{\mathcal{W}}

\newcommand{\cA}{\mathcal{A}}
\newcommand{\cC}{\mathcal{C}}
\newcommand{\cD}{\mathcal{D}}
\newcommand{\cU}{\mathcal{U}}
\newcommand{\cO}{\mathcal{O}}
\newcommand{\cV}{\mathcal{V}}
\newcommand{\cF}{\mathcal{F}}
\newcommand{\cG}{\mathcal{G}}

\newcommand{\cJ}{\mathscr{J}}

\newcommand{\cK}{\mathcal{K}}
\newcommand{\cE}{\mathcal{E}}

\newcommand{\sL}{\mathscr{L}}
\newcommand{\sM}{\mathscr{M}}

\newcommand{\sJ}{\mathscr{J}}

\newcommand{\bL}{\mathbb{L}}

\newcommand{\bG}{\mathbb{G}}
\newcommand{\bA}{\mathbb{A}}

\newcommand{\Var}{\mathbf{Var}}
\newcommand{\Stack}{\mathbf{Stack}}
\newcommand{\MHS}{\mathbf{pMHS}}
\newcommand{\MHM}{\mathbf{MHM}}

\newcommand{\diff}{\mathrm{d}}
\newcommand{\red}{\mathrm{red}}
\newcommand{\id}{\mathrm{id}}

\newcommand{\str}{\mathrm{str}}
\newcommand{\Hdg}{\mathrm{Hdg}}
\newcommand{\even}{\mathrm{even}}
\newcommand{\odd}{\mathrm{odd}}
\newcommand{\orb}{\mathrm{orb}}

\newcommand{\lPuiseux}{\{\!\{}
\newcommand{\rPuiseux}{\}\!\}}

\DeclareMathOperator{\rep}{rep}

\DeclareMathOperator{\e}{e}
\DeclareMathOperator{\Spec}{Spec}
\DeclareMathOperator{\Ext}{Ext}

\DeclareMathOperator{\Hom}{Hom}

\DeclareMathOperator{\GL}{GL}

\DeclareMathOperator{\uHom}{\underline{\Hom}}

\DeclareMathOperator{\HD}{HD}
\DeclareMathOperator{\wt}{wt}
\DeclareMathOperator{\shft}{shft}

\tikzset{cong/.style={draw=none,edge node={node [sloped, allow upside down, auto=false]{$\cong$}}},
         Isom/.style={above,every to/.append style={edge node={node [sloped, allow upside down, auto=false]{$\sim$}}}}}

\title{A cohomological interpretation for stringy Hodge numbers}

\author{Jiahui Huang, Matthew Satriano, and Jeremy Usatine}

\thanks{MS was partially supported by an NSERC Discovery Grant. JU was partially supported by the Simons Foundation Travel Support for Mathematicians program and NSF DMS-2502347.}

\address{Jiahui Huang, Department of Pure Mathematics, University of Waterloo}
\email{j346huan@uwaterloo.ca}

\address{Matthew Satriano, Department of Pure Mathematics, University of Waterloo}
\email{msatriano@uwaterloo.ca}

\address{Jeremy Usatine, Department of Mathematics, Florida State University}
\email{jusatine@fsu.edu}

\begin{document}

\begin{abstract}
We obtain a cohomological interpretation for Batyrev's stringy Hodge numbers in the full generality in which they are defined. In a previous paper, the second and third authors used motivic integration to define the stringy Hodge--Deligne invariant of a smooth Artin stack $\mathcal{X}$ and proved that when $\mathcal{X}$ is a crepant resolution of a variety $Y$ with log-terminal singularities, the generating function for the stringy Hodge numbers of $Y$ is equal to the stringy Hodge--Deligne invariant of $\mathcal{X}$. In this paper, we introduce a cohomology theory $H_{\mathrm{str}}^*(\mathcal{X})$ that computes the stringy Hodge--Deligne invariant of $\mathcal{X}$. Since, by previous work of the second and third authors, all varieties with log-terminal singularities admit a crepant resolution by an Artin stack, this gives a cohomological interpretation for stringy Hodge numbers of any variety with log-terminal singularities. We also show that in the special case where $\mathcal{X}$ is Deligne--Mumford, $H_{\mathrm{str}}^*(\mathcal{X})$ coincides with the orbifold cohomology of $\mathcal{X}$.
\end{abstract}

\maketitle

\setcounter{tocdepth}{1}

\tableofcontents

\section{Introduction}

\numberwithin{theorem}{section}
\numberwithin{lemma}{section}
\numberwithin{corollary}{section}
\numberwithin{proposition}{section}
\numberwithin{conjecture}{section}
\numberwithin{question}{section}
\numberwithin{remark}{section}
\numberwithin{definition}{section}
\numberwithin{example}{section}
\numberwithin{notation}{section}

The \emph{topological mirror symmetry test} predicts that if $Y, Y^\vee$ is a mirror pair of $d$-dimensional smooth Calabi--Yau varieties, then
\[
	h^{p,q}(Y) = h^{d-p, q}(Y^\vee)
\]
for all $p,q$. However in many of the known constructions for mirror pairs, $Y$ and $Y^\vee$ are not both smooth, and in that case the topological mirror symmetry test as stated does not work. This problem can be fixed when $Y$ and $Y^\vee$ admit crepant resolutions, as then their crepant resolutions form a mirror pair of smooth Calabi--Yau varieties to which the topological mirror symmetry test then may be applied. Unfortunately, the condition of admitting a crepant resolution is a rather restrictive one. For example if $Y$ is singular, $\Q$-factorial, and terminal, then $Y$ does not admit a crepant resolution. Nonetheless, Batyrev found a beautiful solution to this problem, namely to still consider singular Calabi--Yau varieties but to replace the usual Hodge numbers with his notion of \emph{stringy Hodge numbers}. 

Let $Y$ be a complex variety with log-terminal singularities. In \cite{Batyrev}, Batyrev introduced the \emph{stringy Hodge--Deligne invariant}\footnote{In Batyrev's original paper and often elsewhere in the literature, the stringy Hodge--Deligne invariant is referred to as the stringy $E$-function.} $\HD_{\str}(Y)$ of $Y$ and called its (sign-corrected) coefficients the stringy Hodge numbers of $Y$. Many examples have since demonstrated the utility of using these stringy Hodge numbers in order to formulate the topological mirror symmetry test, and more generally, these stringy Hodge numbers have become a well studied invariant in their own right. However, there remained a desire to find a cohomology theory that computes them (see e.g., \cite{BatyrevDais, Borisov}). To our knowledge, this problem of finding such a cohomology theory has remained wide open. In this paper, we propose a solution to this problem in full generality. In other words, we define such a cohomology theory, show that it indeed computes Batyrev's stringy Hodge numbers, and furthermore, we show that orbifold cohomology arises as a special case of our cohomology theory.

More precisely, by a ``cohomology theory that computes the stringy Hodge numbers of $Y$'', we mean a cohomology theory, endowed with a mixed Hodge structure, whose Hodge--Deligne invariant coincides with $\HD_{\str}(Y)$. In order to motivate our construction, we first mention some special cases where such a cohomology theory is known to exist. Although Batyrev's original definition for $\HD_{\str}(Y)$ does not describe it as the Hodge--Deligne invariant of some cohomology theory, the definition does immediately imply that some special cases have such a description. When $Y$ is smooth, $\HD_{\str}(Y)$ coincides with the usual Hodge--Deligne polynomial of $Y$ and therefore is the Hodge--Deligne invariant of the compactly supported cohomology of $Y$. More generally, and importantly from the perspective of mirror symmetry, if $Y$ admits a crepant resolution of singularities $X \to Y$, then $\HD_{\str}(Y)$ coincides with the Hodge--Deligne polynomial of $X$ and is therefore the Hodge--Deligne invariant of the compactly supported cohomology of $X$. However as we have mentioned, the condition of admitting a crepant resolution is a restrictive one, so these special cases are far from the general story.

In \cite{Yasuda} (see \cite{Yasuda2004} for a special case), Yasuda showed that if $\cX \to Y$ is a crepant resolution of singularities by a Deligne--Mumford stack $\cX$, then $\HD_{\str}(Y)$ coincides with the Hodge--Deligne invariant of the (compactly supported) orbifold cohomology of $\cX$ in the sense of Chen--Ruan \cite{ChenRuan}. Although we are not aware of any precise statements on whether or not admitting a crepant resolution by a Deligne--Mumford stack is rare, there are plenty of examples of log-terminal $Y$ that do not admit such a resolution. For example, if $Y$ is 1-Gorenstein and $\HD_{\str}(Y)$ is not a polynomial (see e.g., \cite{BatyrevMoreau, SchepersVeys} for such examples), then $Y$ does not admit a resolution of singularities by a Deligne--Mumford stack. Furthermore, there are examples where $Y$ is 1-Gorenstein and $\HD_{\str}(Y)$ is a polynomial but $Y$ nonetheless does not admit a crepant resolution of singularities by a Deligne--Mumford stack (see e.g., \cite[Remark 5.9]{vandenBergh}). In summary, in order to obtain crepant resolutions of singularities for all log-terminal varieties, Deligne--Mumford stacks are not sufficient.

In \cite{SatrianoUsatine3}, the second and third author showed that any log-terminal variety $Y$ admits a crepant resolution of singularities $\cX \to Y$ by an Artin stack. However at that time (to our knowledge), there was not even a proposal for a cohomology theory associated to $\cX$ whose Hodge--Deligne invariant coincides with $\HD_{\str}(Y)$. As an intermediate step in finding such a cohomology theory, the same authors in \cite{SatrianoUsatine6} used a certain motivic integral over twisted arcs of $\cX$ to define the \emph{stringy Hodge--Deligne invariant} $\HD_{\str}(\cX)$ of $\cX$ and showed that $\HD_\str(\cX) = \HD_{\str}(Y)$. The remaining goal was therefore to find a cohomology theory for $\cX$ whose Hodge--Deligne invariant coincides with $\HD_\str(\cX)$. In this paper we accomplish that goal, and as a result, we obtain a cohomological interpretation for stringy Hodge numbers \emph{in full generality}. 

Before we describe the construction, we highlight one appealing aspect to the approach we have taken. Not only do we obtain a cohomological interpretation for stringy Hodge numbers, but we do so by returning to an idea deeply entrenched in the historical mirror symmetry literature: if one wants to work with a singular Calabi--Yau variety, one should replace it with a crepant resolution.

\subsection{Main results}

We begin by recalling the definition of the stringy Hodge--Deligne invariant of an Artin stack. For the meaning of the motivic integral $\int_{\cA_\cX} \bL^{-\wt_\cX} \diff\nu_\cX \in \widehat{\sM}_\C$ and the ring homomorphism $\HD: \widehat{\sM}_\C: \to \Z \lPuiseux u^{-1}, v^{-1} \rPuiseux$, we refer to \autoref{preliminariesMotivicIntegration} and \autoref{preliminariesGrothendieckRings}, respectively, below.

\begin{definition}
Let $\cX$ be a smooth finite type equidimensional Artin stack over $\C$ with affine diagonal and properly stable good moduli space. Then $\bL^{-\wt_\cX}$ is integrable on $\cA_\cX$ by \autoref{weightFunctionIsIntegrable} below, so we may define the \emph{stringy class} of $\cX$ to be
\[
	\e_{\str}(\cX) = \bL^{\dim\cX} \int_{\cA_\cX} \bL^{-\wt_\cX} \diff\nu_\cX,
\]
and we may define the \emph{stringy Hodge--Deligne invariant} of $\cX$ to be
\[
	\HD_{\str}(\cX) = \HD(\e_{\str}(\cX)) \in \Z \lPuiseux u^{-1}, v^{-1} \rPuiseux.
\]
\end{definition}

We will now set some notation that will be used to define our desired cohomology theory. We refer to \autoref{preliminariesMotivicIntegration} below for the definitions of $\cA_\cX$, $\wt_\cX$, and $\sJ_n(\cX)$.

\begin{notation}
If $\cX$ is a smooth finite type equidimensional Artin stack over $\C$ with affine diagonal and properly stable good moduli space, then for any $n \in \Z_{\geq 0}$ and $w \in \Q$, we will let $\cA_{\cX, n}^w$ denote the image of $\cA_\cX \cap \wt_\cX^{-1}(w)$ in $|\sJ_n(\cX)|$. Then by \autoref{theoremStringyClassWellDefined}(\ref{CanonicalSetOfTwistedArcsHasConstructibleImage}) below, each $\cA^w_{\cX,n}$ is a quasi-compact locally constructible subset. Therefore we can canonically associate to $\cA^w_{\cX,n}$ an Artin stack $\cY^{\cA^w_{\cX,n}}$ over $\C$ as described in \autoref{notationForCanonicalDecompostion} below, and we will set $\cY_{\cX, n}^w = \cY^{\cA^w_{\cX,n}}$. In particular, $\cY_{\cX,n}^w$ is finite type over $\C$ by \autoref{propositionThatCanonicalDecompositionIsDecomposition} below and is a disjoint union of locally closed substacks of $\sJ_n(\cX)$ whose supports partition $\cA^w_{\cX,n}$. 
\end{notation}

\begin{remark}
The main reason we consider the stacks $\cY_{\cX,n}^w$, and more specifically their compactly supported cohomology, is in order to canonically assign mixed Hodge structures to the constructible subsets $\cA^w_{\cX, n}$. We expect that if one instead considers an alternative ``nice'' way of canonically associating mixed Hodge structures to constructible subsets, then the ideas in this paper will still apply and thus one will still obtain a cohomological interpretation for stringy Hodge numbers.
\end{remark}

The following is the main structural theorem that will allow us to define our cohomology theory. See \autoref{definitionOfStackCohomology} below for the definition of compactly supported cohomology, with mixed Hodge structure, of a finite type Artin stack over $\C$.

\begin{maintheorem}\label{stringyCohomologyStabilizes}
Let $\cX$ be a smooth finite type equidimensional Artin stack over $\C$ with affine diagonal and properly stable good moduli space, let $w \in \Q$, and let $i \in \Z$. Then there exists some $m \in \Z_{\geq 0}$ such that for all $n \geq m$,
\[
	H_c^{i+2n\dim\cX}(\cY_{\cX,n}^w)(n\dim\cX) \cong H_c^{i+2m\dim\cX}(\cY_{\cX, m}^w)(m\dim\cX).
\]
\end{maintheorem}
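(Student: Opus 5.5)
The plan is to show that, for $n$ large, the truncation maps $\sJ_{n+1}(\cX) \to \sJ_n(\cX)$ restrict to morphisms $\cY^w_{\cX,n+1} \to \cY^w_{\cX,n}$ (after refining the canonical decompositions if needed) that are piecewise affine bundles of rank $\dim\cX$. This is the stack analogue of the classical fact that truncations of arcs on a smooth variety are Zariski-locally trivial $\bA^{\dim}$-fibrations. Granting this, compactly supported cohomology satisfies
\[
H_c^{j+2\dim\cX}(\cY^w_{\cX,n+1}) \cong H_c^{j}(\cY^w_{\cX,n})(-\dim\cX).
\]
Iterating this isomorphism from a fixed $m$ gives the stated stabilization for all $n \ge m$. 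Note that the isomorphism is needed in every degree $i$, uniformly in $n\ge m$, so $m$ can be taken independent of $i$.

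\textbf{Step 1 (stabilization of the sets).} Use \autoref{theoremStringyClassWellDefined} and the results behind it. The set $\cA_\cX\cap\wt_\cX^{-1}(w)$ is a cylinder, i.e.\ the preimage of $\cA^w_{\cX,m}$ under $\theta_m$ for some $m$. The twisted arcs in it have uniformly bounded "height" or stabilizer data. Concretely, they factor through finitely many $\mu_\ell$-twisted or tame-stabilizer strata in which the relevant deformation theory is controlled. This gives $\cA^w_{\cX,n+1} = \theta^{-1}_{n+1,n}(\cA^w_{\cX,n})$ for all $n\ge m$.

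\textbf{Step 2 (fiber structure).} For $n\ge m$, show that $\theta_{n+1,n}$ restricted over $\cA^w_{\cX,n}$ is, after a stratification into locally closed pieces compatible with the canonical decomposition of \autoref{notationForCanonicalDecompostion}, a vector bundle torsor. On $\cA^w_{\cX,n}$ the relevant arcs $\mathrm{Spec}\,\C[t]/t^{n+1}\to\cX$ lift uniquely up to a torsor under the tangent space. Because we are past the stabilization bound, the automorphism group of the jet does not change when it is lifted. The lifts therefore form an affine space of dimension $\dim\cX$ over each point, with no change in stabilizer dimension. Since torsors under vector bundles on stacks induce the Tate-twist shift on $H_c$ by homotopy invariance in the setting of \autoref{definitionOfStackCohomology}, and $H_c$ of a stratified stack is computed by the long exact sequences, I would aim to get an honest map $\cY^w_{\cX,n+1}\to\cY^w_{\cX,n}$. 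The canonical decomposition is defined from $\cA$ via its constructible structure, and preimages of strata should be strata, so $\cY^w_{\cX,n+1}$ is exactly the pullback. The cohomology isomorphism then follows directly, rather than only on Hodge--Deligne classes.

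\textbf{Main obstacle.} The hardest step is Step 2 for Artin (not Deligne--Mumford) stacks. The jet stacks $\sJ_n(\cX)$ can have positive-dimensional stabilizers, which may jump. We must also check that the canonical decomposition is functorial under $\theta_{n+1,n}$, so that $\cY^w_{\cX,n+1}$ really is the pullback of $\cY^w_{\cX,n}$. For the first issue I would try to use the properly stable good moduli space to reduce, via the Luna-type étale slice / local structure theorem, to quotient stacks $[V/G]$ with $G$ reductive. There one can compute $\sJ_n([V/G])$ as $[\sJ_n(V)/\sJ_n(G)]$. The kernel of $\sJ_{n+1}(G)\to\sJ_n(G)$ is a vector group, which acts freely or trivially in a controlled way on the stable locus. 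The difference of relative dimensions $\dim V-\dim G=\dim\cX$ then gives the affine fibration.
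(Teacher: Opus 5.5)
\textbf{There is a genuine gap: Step 1 is false in general, and the rest of your argument rests on it.}

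\emph{Why Step 1 fails.} The set $\cA=\cA_\cX\cap\wt_\cX^{-1}(w)$ is bounded, since only finitely many twisting orders $\ell$ occur. But it is typically \emph{not} a cylinder. So in general $\cA^w_{\cX,n+1}\neq(\theta^{n+1}_n)^{-1}(\cA^w_{\cX,n})$ for every $n$, and $\cY^w_{\cX,n+1}$ is not a pullback of $\cY^w_{\cX,n}$. The reason is that $\cA_\cX$ is the image of the arcs of the canonical reduction of stabilizers $\cX'\to\cX$, which is not proper. Whether an arc lifts depends on its contact with the unstable locus to arbitrarily high order.

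For example, take $\cX=[\bA^2/\bG_m]$ with weights $(1,-1)$. Then $\cX'=[\bA^1/\mu_2]$, mapping by $x\mapsto(x,x)$. An untwisted arc $(a(t),b(t))$ lies in $\cA_\cX$ if and only if $\mathrm{ord}\,a=\mathrm{ord}\,b$. For every $n$, the arcs $(t^{n+1},t^{n+1})\in\cA_\cX$ and $(t^{n+1},t^{n+2})\notin\cA_\cX$ have the same $n$-jet. This is exactly why \autoref{theoremStringyClassWellDefined} only gives measurability, constructibility of $\theta_n(\cA)$, and $\nu_\cX(\cA)=\lim_n\nu_\cX(\theta_n^{-1}(\theta_n(\cA)))$.

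\emph{Uniformity in $i$ also fails.} Your claim that $m$ can be chosen independently of $i$ is wrong for the same reason. In this example, the stratum of jets with $\mathrm{ord}\,a=\mathrm{ord}\,b=n+1$ is isomorphic to $\bG_m\times B\bG_a^{n+1}$. It first appears at level $n+1$ and contributes a nonzero class in degree $i=-3-4n$, that is, to $H_c^{-1-2n}$ at level $n+1$. The corresponding level-$n$ group $H_c^{-3-2n}$ vanishes.

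\emph{The missing idea} is a way to control the discrepancy between $\cC_n:=\theta_n(\cA)$ and the cylinder set $\cD:=(\theta^n_m)^{-1}(\cC_m)\supset\cC_n$. The paper does this as follows.
\begin{enumerate}
\item Fix $i$. Use \autoref{theoremStringyClassWellDefined}(c) to choose $m$, depending on $i$, with $\Vert\nu_\cX(\theta_m^{-1}(\cC_m))-\nu_\cX(\theta_n^{-1}(\cC_n))\Vert<\exp(-\dim\cX+(i-1)/2)$ for all $n\ge m$. This says $\dim(\cD\setminus\cC_n)<n\dim\cX+(i-1)/2$.
\item Compare the canonical decompositions of $\cC_n$ and $\cD$ (\autoref{differenceOfCanonicalDecompositionPieces}). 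Excision and the vanishing $H^j_c=0$ for $j>2\dim$ then give $H_c^{i+2n\dim\cX}(\cY^{\cC_n})\cong H_c^{i+2n\dim\cX}(\cY^{\cD})$ (\autoref{cohomologyOfConstructibleSetsWithSmallDifference}).
\item Only for the genuine cylinder $\cD$ does your Step 2 mechanism apply. By \autoref{theoremSmoothnessOfTruncation} and \autoref{canonicalDecompositionSmoothMap}, $\cY^{\cD}$ is an iterated base change of $\cY^{\cC_m}$ along the smooth maps $\theta^{r+1}_r$.
\end{enumerate}

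\emph{Your fiber description is also inaccurate.} The fibers of $\theta^{r+1}_r$ are $\bA^r\times B\bG_a^s$ with $r-s=\dim\cX$, and $s>0$ does not disappear for large $n$. In the example, the jet $(0,0)$ lies in $\cC_n$ for all $n$ and has stabilizer $\sL_n(\bG_m)$ of dimension $n+1$; the fiber over it is $\bA^2\times B\bG_a$. So the input you need is the co-unit isomorphism $f_!f^!\Q_\cZ\to\Q_\cZ$ for such stacky affine fibrations (\autoref{theoremGysinIsIsomorphism}). Homotopy invariance for vector bundle torsors, or an ``affine space with constant stabilizer'' claim, is not enough.
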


\autoref{stringyCohomologyStabilizes} allows us to make this paper's main definition.

\begin{definition}
Let $\cX$ be a smooth finite type equidimensional Artin stack over $\C$ with affine diagonal and properly stable good moduli space. For each $i \in \Z$ and $w \in \Q$, we define
\[
	H_{\str}^{i,w}(\cX) = H_c^{i+2m\dim\cX}(\cY_{\cX, m}^w)(m\dim\cX),
\]
where $m$ satisfies the conclusion of \autoref{stringyCohomologyStabilizes}. Then $H_{\str}^{i,w}(\cX)$ is a polarizable mixed Hodge structure and is well defined up to isomorphism by \autoref{stringyCohomologyStabilizes}. Furthermore, for each $i \in \Q$ we define
\begin{align*}
	&H_{\str}^{\even, i}(\cX) = \bigoplus_{w \in \Q,\text{ $i+2w$ is an even integer}} H_{\str}^{i+2w, w}(\cX)(w),\\
	&H_{\str}^{\odd, i}(\cX) =  \bigoplus_{w \in \Q,\text{ $i+2w$ is an odd integer}} H_{\str}^{i+2w, w}(\cX)(w),
\end{align*}
and
\[
	H_{\str}^i(\cX) = H_{\str}^{\even, i}(\cX) \oplus H_{\str}^{\odd, i}(\cX).
\]
\end{definition}

\begin{remark}
Since $\cX$ has a properly stable good moduli space, the function $\wt_\cX$ takes only finitely many values on $\cA_\cX$ by \cite[Proposition 4.19]{SatrianoUsatine6} and the fact that the canonical reduction of stabilizers of $\cX$, in the sense of \cite{EdidinRydh}, is tame, so the direct sums defining $H_{\str}^{\even, i}(\cX)$ and $H_{\str}^{\odd, i}(\cX)$ are finite.
\end{remark}

For the next remark, we refer to \autoref{preliminariesGrothendieckRings} below for the definition of the ring $\Z \llparenthesis u^{-1}, v^{-1} \rrparenthesis$.

\begin{remark}\label{remarkWeighFunctionIntegral}
If $\HD_{\str}(\cX) \in \Z \llparenthesis u^{-1}, v^{-1} \rrparenthesis$, then $\wt_\cX(\cA_\cX) \subset \Z$ by \autoref{propositionIntegralWeightFunction} below. Therefore $H_{\str}^{i,w}(\cX) = 0$ unless $w \in \Z$, so $H_{\str}^{\even, i}(\cX) = 0$ unless $i$ is an even integer, and $H_{\str}^{\odd, i}(\cX) = 0$ unless $i$ is an odd integer. Note that if $\cX$ is a crepant resolution of a $\Q$-Gorenstein irreducible finite type scheme over $\C$ whose stringy Hodge--Deligne invariant is contained in $\Z \llparenthesis u^{-1}, v^{-1} \rrparenthesis$, then $\HD_{\str}(\cX) \in \Z \llparenthesis u^{-1}, v^{-1} \rrparenthesis$ by \cite[Corollary 1.9]{SatrianoUsatine6}, so the previous statement applies.
\end{remark}

Our next main theorem shows that our cohomology theory does indeed compute the stringy Hodge--Deligne invariant of an Artin stack. We refer to \autoref{preliminariesGrothendieckRings} below for the definition of the ring homomorphism $\chi_{\Hdg}: \widehat{\sM}_\C \to \widehat{K_0(\MHS)}$.

\begin{maintheorem}\label{theoremStringyCohomologyGivesStringyHD}
Let $\cX$ be a smooth finite type equidimensional Artin stack over $\C$ with affine diagonal and properly stable good moduli space. Then for each $w \in \Q$,
\[
	\chi_{\Hdg}(\bL^{\dim\cX}\nu_\cX(\cA_\cX \cap \wt_\cX^{-1}(w))) = \sum_{i \in \Z} (-1)^{i}[H_{\str}^{i,w}(\cX)].
\]
In particular,
\[
	\HD_{\str}(\cX) = \sum_{i \in \Q} \left(\HD(H_{\str}^{\even, i}(\cX)) -  \HD(H_{\str}^{\odd, i}(\cX))\right).
\]
Furthermore if $\HD_{\str}(\cX) \in \Z \llparenthesis u^{-1}, v^{-1} \rrparenthesis$, then
\[
	\chi_{\Hdg}(\e_{\str}(\cX)) = \sum_{i \in \Z} (-1)^i [H^i_{\str}(\cX)],
\]
and in particular
\[
	\HD_{\str}(\cX) = \sum_{i \in \Z}(-1)^i \HD(H_{\str}^i(\cX)).
\]
\end{maintheorem}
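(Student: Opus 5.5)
We prove the first identity and deduce the rest formally. Fix $w$, and set $C = \cA_\cX \cap \wt_\cX^{-1}(w)$. The motivic measure is computed through truncations: by the stabilization underlying \autoref{theoremStringyClassWellDefined}, there is $m$ such that for all $n \geq m$ we have
\[
\nu_\cX(C) = \e(\cA^w_{\cX,n})\,\bL^{-(n+1)\dim\cX}.
\]
Here $\e(\cA^w_{\cX,n})$ denotes the class of the quasi-compact locally constructible subset $\cA^w_{\cX,n}\subset|\sJ_n(\cX)|$, and we take $m$ large enough that the same $n$ work in \autoref{stringyCohomologyStabilizes}. Whatever precise normalization the paper uses for $\nu_\cX$, the point is that
\[
\bL^{\dim\cX}\nu_\cX(C) = \e(\cY^w_{\cX,n})\,\bL^{-n\dim\cX}
\]
for $n \gg 0$. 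This is consistent with the twist by $(n\dim\cX)$ and the shift $2n\dim\cX$ in \autoref{stringyCohomologyStabilizes}. By \autoref{propositionThatCanonicalDecompositionIsDecomposition}, $\cY^w_{\cX,n}$ is a finite type stack that is a disjoint union of locally closed substacks partitioning $\cA^w_{\cX,n}$, so its class equals $\e(\cA^w_{\cX,n})$.

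Next we need that $\chi_{\Hdg}$ of the class of a finite type Artin stack $\cY$ over $\C$ (with affine stabilizers, which holds here since $\cX$ has affine diagonal) equals $\sum_i(-1)^i[H^i_c(\cY)]$ in $\widehat{K_0(\MHS)}$, with cohomology as in \autoref{definitionOfStackCohomology}. I would take this from the preliminaries if it is stated there. Otherwise I would prove it by stratifying $\cY$ into quotient stacks $[V/\GL_r]$. Compactly supported cohomology is additive in the Grothendieck group of mixed Hodge structures via the long exact sequence for open/closed decompositions. For a quotient, one uses $[V] = [\cY][\GL_r]$ together with the Künneth formula for the $\GL_r$-torsor $V\to\cY$. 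Torsors under $\GL_r$ are Zariski locally trivial ("special"), so cohomologically $H_c(V)\cong H_c(\cY)\otimes H_c(\GL_r)$ in $K_0$, and $[H_c(\GL_r)]$ is invertible in the completion. A subtlety is that $H^i_c$ of a stack can be nonzero in infinitely many (negative) degrees. The alternating sum must therefore be interpreted in the completion, and convergence comes from the weights going to $-\infty$. This is the step I expect to be the main technical obstacle.

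Applying $\chi_{\Hdg}$ and using that $\chi_{\Hdg}(\bL^{-n\dim\cX})$ corresponds to the Tate twist $(n\dim\cX)$, which preserves parity since the degree shift is $2n\dim\cX$, we get
\[
\chi_{\Hdg}(\bL^{\dim\cX}\nu_\cX(C)) = \sum_j(-1)^j[H^j_c(\cY^w_{\cX,n})(n\dim\cX)] = \sum_i(-1)^i[H^{i+2n\dim\cX}_c(\cY^w_{\cX,n})(n\dim\cX)].
\]
Each summand stabilizes to $[H^{i,w}_\str(\cX)]$ by \autoref{stringyCohomologyStabilizes}. However, $m$ depends on $i$, so one must justify passing to the limit termwise in the completed ring. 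I would argue that for fixed $w$ the terms with $i$ outside any bounded-below range have weight tending to $-\infty$ uniformly in $n$, since $\cY^w_{\cX,n}$ has dimension bounded by $n\dim\cX+\text{const}$ because $\cA^w_{\cX,n}$ has bounded relative dimension. Then only finitely many $i$ matter modulo each filtration step. This yields the first identity.

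For the remaining claims, sum over the finitely many values $w$ of $\wt_\cX$ on $\cA_\cX$, using $\e_\str(\cX)=\sum_w\bL^{\dim\cX}\bL^{-w}\nu_\cX(C_w)$. Multiplication by $\bL^{-w}$ becomes the twist $(w)$, which multiplies $\HD$ by $(uv)^{-w}$. Reindex with $i' = i-2w$, so that $H^{i,w}_\str(w)$ contributes to $H^{\even/\odd,\,i-2w}_\str$ according to the parity of $i$. Then apply $\HD$, recalling that for fractional $w$ the sign $(-1)^i$ is recorded by the even/odd splitting. When $\HD_\str(\cX)\in\Z\llparenthesis u^{-1},v^{-1}\rrparenthesis$, \autoref{propositionIntegralWeightFunction} gives $w\in\Z$. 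The parity of $i+2w$ then equals that of $i$, the sign becomes $(-1)^{i}$, and the last two formulas follow.
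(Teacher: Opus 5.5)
Your architecture matches the paper's: truncate, use $\chi_{\Hdg}$ of a stack $=$ alternating sum of $H_c$, stabilize degree by degree, and control the tail by weights. However, your first step is false as stated. You assert that for all $n\ge m$ one has $\nu_\cX(C)=\e(\cA^w_{\cX,n})\bL^{-(n+1)\dim\cX}$, i.e.\ that $C$ behaves like a cylinder. In general it does not. $\cA_\cX$ is only measurable, because $\theta_n^{-1}(\theta_n(C))$ also contains arcs whose $n$-jets lie over the non-stable locus but which do not lift to $\cX'$. Accordingly, \autoref{theoremStringyClassWellDefined}(c) only gives convergence $\nu_\cX(\theta_n^{-1}(\theta_n(C)))\to\nu_\cX(C)$, not stabilization. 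For example, take $\cX=[\bA^2/\bG_m]$ with weights $\pm1$, so that $\cX'=[\bA^1/\mu_2]$. The untwisted arcs in $\cA_\cX$ are, up to the zero arc, those with $\mathrm{ord}_t x=\mathrm{ord}_t y<\infty$. This set is not a cylinder: $\theta_n^{-1}(\theta_n(-))$ of it always contains the arcs with $\mathrm{ord}_t x=n+1$, $\mathrm{ord}_t y=n+2$, which are not in $\cA_\cX$. Hence your displayed identity $\chi_{\Hdg}(\bL^{\dim\cX}\nu_\cX(C))=\sum_j(-1)^j[H^j_c(\cY^w_{\cX,n})(n\dim\cX)]$ is false in general, since the right-hand side genuinely depends on $n$.

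The repair uses only ingredients you already invoke, and it is exactly the paper's $\varepsilon$-argument in \autoref{theoremCohomologicalInterpretationForCertainMeasurableSets}. Combining \autoref{theoremStringyClassWellDefined}(c), \autoref{propositionThatCanonicalDecompositionIsDecomposition}, continuity of $\chi_{\Hdg}$ and \autoref{ChiHdgDoesWhatWeExpect}, the left side equals $\lim_n S_n$, where $S_n=\sum_i(-1)^i[H_c^{i+2n\dim\cX}(\cY^w_{\cX,n})(n\dim\cX)]$. So you must prove $S_n\to\sum_i(-1)^i[H^{i,w}_{\str}(\cX)]$, rather than evaluate an eventually constant sequence. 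Given $N$:
\begin{enumerate}
\item[(i)] Terms with $i>2\dim\cX$ vanish, because $\dim\cY^w_{\cX,n}\le(n+1)\dim\cX$ (\autoref{theoremSmoothnessOfTruncation} and \autoref{cohomologyVanishingAboveTwiceDimension}).
\item[(ii)] Terms with $i<N$, and likewise the classes $[H^{i,w}_{\str}(\cX)]$ with $i<N$, have norm at most $\exp(N/2)$ uniformly in $n$, because $h^{p,q}(H^j_c)=0$ for $p+q>j$ (\autoref{pqPieceVanishesAboveDegree}).
\item[(iii)] The finitely many remaining $i$ stabilize by \autoref{stringyCohomologyStabilizes}.
\end{enumerate}
Your stated reason for the uniform tail bound is misattributed. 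The dimension bound only kills large $i$; smallness for $i\ll 0$ comes from the weight bound in (ii).

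The identity $\chi_{\Hdg}(\e(\cY))=\sum_i(-1)^i[H^i_c(\cY)]$ is \autoref{ChiHdgDoesWhatWeExpect}, so you need not reprove it. Your K\"unneth argument for $\GL_r$-torsors over a stack base would require a Leray argument with convergence in the completion. The paper avoids this: it proves that the co-unit $f_!f^!\Q\to\Q$ is an isomorphism for stacky affine-space fibrations (\autoref{theoremGysinIsIsomorphism}). This gives a ring map out of Ekedahl's $K_0(\Stack_\C)$, which it then compares with $\chi_{\Hdg}$ via Ekedahl's localization theorem.

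Your summation over $w$ and the even/odd bookkeeping in the last paragraph are fine.
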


The following corollary, which immediately follows from \autoref{remarkWeighFunctionIntegral}, \autoref{theoremStringyCohomologyGivesStringyHD}, and \cite[Corollary 1.9 and Proposition 12.3]{SatrianoUsatine6}, provides a cohomological interpretation for stringy Hodge numbers in full generality. 

\begin{maincorollary}\label{corollaryStringyCohomologyCrepantResolution}
Let $Y$ be a log-terminal irreducible finite type scheme over $\C$, and let $\cX \to Y$ be a crepant resolution with $\cX$ a smooth finite type irreducible Artin stack over $\C$ with affine diagonal (such a resolution exists by \cite[Theorem 1.1]{SatrianoUsatine3}, or see \cite[Theorem 1.2]{SatrianoUsatine6} for the statement in this level of generality). Then
\[
	\HD_{\str}(Y) = \sum_{i \in \Q} \left(\HD(H_{\str}^{\even, i}(\cX)) -  \HD(H_{\str}^{\odd, i}(\cX))\right).
\]
Furthermore\footnote{In the original definition for stringy Hodge numbers \cite[Definition 3.8]{Batyrev}, Batyrev assumed the stronger condition that the stringy Hodge--Deligne invariant is contained in $\Z[u,v]$.} if $\HD_{\str}(Y) \in \Z \llparenthesis u^{-1}, v^{-1} \rrparenthesis$, for example if $Y$ is 1-Gorenstein, then
\[
	\HD_{\str}(Y) = \sum_{i \in \Z}(-1)^i \HD(H_{\str}^i(\cX)).
\]
\end{maincorollary}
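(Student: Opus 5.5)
The corollary should follow by combining \autoref{theoremStringyCohomologyGivesStringyHD} with the comparison $\HD_{\str}(\cX) = \HD_{\str}(Y)$ from \cite{SatrianoUsatine6}. The only real work is checking that the hypotheses of those results are met.

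First, I verify that $\cX$ satisfies the standing hypotheses of \autoref{theoremStringyCohomologyGivesStringyHD}. By assumption $\cX$ is smooth, finite type, irreducible (hence equidimensional) and has affine diagonal. For a crepant resolution $\cX \to Y$ in the sense of \cite{SatrianoUsatine6}, \cite[Proposition 12.3]{SatrianoUsatine6} should supply the remaining condition, namely that $\cX$ has a properly stable good moduli space; I expect that space to be $Y$ itself or closely related to it. This is the step I consider the main obstacle. It amounts to matching the paper's definition of crepant resolution with the hypotheses here, and I would cite that proposition directly rather than reprove it.

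Second, \cite[Corollary 1.9]{SatrianoUsatine6} gives $\HD_{\str}(\cX) = \HD_{\str}(Y)$ for a crepant resolution of a log-terminal $Y$. Substituting this into the second displayed identity of \autoref{theoremStringyCohomologyGivesStringyHD} yields the first formula of the corollary.

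Third, assume $\HD_{\str}(Y) \in \Z \llparenthesis u^{-1}, v^{-1} \rrparenthesis$. Then $\HD_{\str}(\cX) = \HD_{\str}(Y)$ lies in the same ring, as noted in \autoref{remarkWeighFunctionIntegral}. The last statement of \autoref{theoremStringyCohomologyGivesStringyHD} therefore gives $\HD_{\str}(\cX) = \sum_{i}(-1)^i \HD(H^i_{\str}(\cX))$, and this equals $\HD_{\str}(Y)$.

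Finally, for the example: if $Y$ is 1-Gorenstein, the discrepancies of $Y$ are integers. Batyrev's formula from a log resolution then expresses $\HD_{\str}(Y)$ as a finite sum of Hodge--Deligne polynomials times products of factors $(uv-1)/((uv)^{a_i+1}-1)$ with $a_i \in \Z_{\ge 0}$. Each such factor expands in $\Z \llparenthesis u^{-1}, v^{-1} \rrparenthesis$, so $\HD_{\str}(Y)$ lies in that ring.
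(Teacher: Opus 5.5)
Your proposal is correct and follows the paper's argument: the paper says the corollary follows immediately from \autoref{remarkWeighFunctionIntegral}, \autoref{theoremStringyCohomologyGivesStringyHD}, and \cite[Corollary 1.9 and Proposition 12.3]{SatrianoUsatine6}, and, as you anticipated, Proposition 12.3 is cited for the properly-stable-good-moduli-space hypothesis. Your check of the 1-Gorenstein example via Batyrev's formula (integral discrepancies $a_j \geq 0$, so each factor expands as $(uv)^{-a_j}$ times a power series in $(uv)^{-1}$) is correct and fills in a detail the paper leaves implicit.
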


Our final main result shows that in the special case where $\cX$ is Deligne--Mumford, our cohomology theory coincides with the orbifold cohomology of $\cX$. See \autoref{subsectionOrbifoldCohomologyPreliminaries} below for the definitions of $H_{\orb}^{\even,i}(\cX), H_{\orb}^{\odd,i}(\cX), H_{\orb}^i(\cX)$, and $\HD_{\orb}(\cX)$.

\begin{maintheorem}\label{theoremStringyIsOrbifold}
Let $\cX$ be a smooth finite type equidimensional tame Artin stack over $\C$ with affine diagonal. Then for all $i \in \Q$,
\[
	H_{\str}^{\even,i}(\cX) \cong H_{\orb}^{\even,i}(\cX), \qquad \text{and} \qquad H_{\str}^{\odd,i}(\cX) \cong H_{\orb}^{\odd,i}(\cX).
\]
In particular,
\[
	\HD_{\str}(\cX) = \HD_{\orb}(\cX),
\]
and for all $i \in \Q$,
\[
	H_{\str}^i(\cX) \cong H_{\orb}^i(\cX).
\]
\end{maintheorem}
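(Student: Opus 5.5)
We plan to compare both sides sector by sector. For a tame stack $\cX$ (which, having affine diagonal and being tame, is Deligne--Mumford with linearly reductive finite stabilizers, and whose coarse space is a properly stable good moduli space), twisted arcs of $\cX$ are classified by the cyclotomic inertia stack $I_\mu\cX = \bigsqcup_{r} \uHom^{\mathrm{rep}}(B\mu_r, \cX)$. We will use the known structure of twisted jets for tame stacks (as in Yasuda): for each $n$, the truncation map $\sJ_n(\cX) \to I_\mu\cX$ (evaluating a twisted jet at its closed point) exists on the twisted part, and over each connected component $\cX_{(g)}$ of $I_\mu\cX$ the stack of twisted $n$-jets is an affine bundle (in the stack sense, Zariski locally trivial after pulling back to an atlas) of relative dimension $n\dim\cX - \mathrm{age}(g)$ plus the dimension of the fixed locus correction, i.e.\ of rank $n\dim\cX - \dim\cX_{(g)}$ up to the age shift. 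The first step is to identify $\cA_\cX$ with all twisted arcs, check that $\wt_\cX$ is constant on the arcs lying over each $\cX_{(g)}$ and equal to the age $a(g)$ (suitably normalized so that $\wt_\cX = -\mathrm{age}$ or $\mathrm{age}$, matching the shift convention in the definition of $H_{\orb}$), and hence that $\cA^w_{\cX,n}$ is the union of the strata of $\sJ_n(\cX)$ over those sectors with $a(g) = w$.

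Second, we must relate the canonically associated stack $\cY^w_{\cX,n}$ (from \autoref{notationForCanonicalDecompostion}) to these geometric strata. Since $\cA^w_{\cX,n}$ is a union of connected components of the twisted jet stack, which is a closed-and-open (in particular locally closed, smooth) substack, we will argue using \autoref{propositionThatCanonicalDecompositionIsDecomposition} that the canonical decomposition of this locally constructible set yields a stack whose compactly supported cohomology agrees with that of the union of the jet strata; if the canonical construction chooses a different stratification, we use that $H_c$ of stacks is additive via the long exact sequences and that a stratification of a smooth stack whose pieces are the pieces themselves gives the same $H_c$ only at the level of Grothendieck classes — so here we expect to need that the canonical decomposition of an open-and-closed smooth substack is the substack itself (reduced structure), which we will verify from the definition.

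Third, compute cohomology: for an affine bundle $\cV \to \cX_{(g)}$ of rank $d_g$, $H_c^{k}(\cV) \cong H_c^{k-2d_g}(\cX_{(g)})(-d_g)$. With $d_g = n\dim\cX - (\dim\cX - \dim\cX_{(g)})$ adjusted by the age, substituting $k = i+2n\dim\cX$ and twisting by $(n\dim\cX)$ gives an answer independent of $n$ (this reproves \autoref{stringyCohomologyStabilizes} here with $m=0$ or $1$), namely $H^{i,w}_{\str}(\cX) \cong \bigoplus_{a(g)=w} H_c^{i - 2\dim\cX_{(g)}+2\dim\cX}(\cX_{(g)})(\dim\cX - \dim\cX_{(g)})$ up to the precise shift. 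Assembling with the Tate twist $(w)$ and degree $i+2w$ in the definition of $H^{\even,i}_{\str}$ then matches the Chen--Ruan degree shift by $2\,\mathrm{age}$ and the corresponding twist in the definition of $H_{\orb}^{\even,i}$, parity being preserved since $i+2w$ and the orbifold degree differ by an even integer. The statements about $\HD$ and $H^i$ follow by summing.

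The main obstacle is the bookkeeping in the first and third steps: pinning down exactly the relative dimension of twisted jets over $\cX_{(g)}$ (which involves the eigenvalue decomposition of the tangent space under $\mu_r$ and rounding of weights at level $n$, so the fiber dimension is $\sum_j \lfloor n - c_j/r\rfloor$-type expressions) and showing that it equals $n\dim\cX - \mathrm{age}(g)$ minus the correct constant so that all signs and twists line up with the chosen conventions for $\wt_\cX$ and for orbifold cohomology. I would first compute this on a quotient $[V/\mu_r]$ for a linear representation, then globalize using the étale-local structure of tame stacks.
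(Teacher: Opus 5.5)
Your architecture matches the paper's. Since $\cX$ is tame, $\cA_\cX=|\sJ(\cX)|$, and each $\cA^w_{\cX,n}$ is a clopen union of sectors of the smooth stack $\sJ_n(\cX)$; as you anticipate, its canonical decomposition is just the corresponding open substack. One then pushes $H_c$ down to cyclotomic inertia along smooth maps with affine-space fibers and matches shifts.

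But there is a genuine gap: the step you defer as ``bookkeeping'' is where the content of the theorem lies, and as written it fails. With the paper's convention (truncation at $t^{n+1}$, not Yasuda's fractional levels), each $\mu_\ell$-eigencoordinate of a twisted $n$-jet contributes exactly $n+1$ monomials $t^{b/\ell}$, and only invariant coordinates contribute a constant term. So over a sector $\cY\subset I_{\mu_\ell}(\cX)$ the relative dimension is exactly $(n+1)\dim\cX-\dim\cY$, with no floor functions and no age. Hence
\[
H^{i,w}_{\str}(\cX)\cong\bigoplus_{w(\cY)=w}H_c^{i-2(\dim\cX-\dim\cY)}(\cY)(\dim\cY-\dim\cX),
\]
which is the opposite shift from yours; the top degree $i=2\dim\cX$ must land on $H_c^{2\dim\cY}(\cY)$.

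Plugging this into $H^{\even,i}_{\str}$ gives $H_c^{i+2w-2c}(\cY)(w-c)$ with $c=\dim\cX-\dim\cY$. Identifying this with $H_c^{i-2\shft_\cX(\cY)}(\cY)(-\shft_\cX(\cY))$ requires exactly $-\shft_\cX(\cY)=w(\cY)+\dim\cY-\dim\cX$, which is \autoref{lemmaShiftAndWeight}. This is not a choice of normalization: $\wt_\cX$ is not $\pm\shft_\cX$, and with either choice the degrees fail to match in general. Rather, $w(\cY)=\sum_{b=1}^{\ell-1}\frac{b}{\ell}d_b(\cY)$ and $\shft_\cX(\cY)=\sum_{b=1}^{\ell-1}\frac{\ell-b}{\ell}d_b(\cY)$ are the ages of a generator and of its inverse. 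They sum to $\dim\cX-d_0(\cY)$, and you still need $\dim\cY=d_0(\cY)$, which the paper extracts from \autoref{propositionSubJetDimension} with $n=1$.

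Second, with this paper's $\cD^\ell_n$ one has $\sJ^\ell_0(\cX)\neq I_{\mu_\ell}(\cX)$. So \autoref{theoremSmoothnessOfTruncation} only brings you down to $\sJ_0(\cX)$, and the whole codimension shift comes from $\pi_\ell\colon\sJ^\ell_0(\cX)\to I_{\mu_\ell}(\cX)$. This is not a standard truncation map and needs its own argument. The paper (\autoref{propComparingCohomologyInertiaToZerothJet}) filters it through $\cZ^\ell_n=\uHom^{\rep}_k(\cC^\ell_{n,k},\cX)$, shows the fibers of $\cY^\ell_{n+1}\to\cY^\ell_n$ are $\bA^{d_n(\cY)}$, gets smoothness from \autoref{miracleFlatnessForStacks}, and applies \autoref{theoremGysinIsIsomorphism}.

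Your alternative of computing on $[V/\mu_r]$ and globalizing \'etale locally can work. You would then also have to prove that twisted jets commute with \'etale base change. Note that \autoref{theoremGysinIsIsomorphism} needs only smoothness plus affine-space fibers, so you do not need local triviality of the bundle.
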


\acknowledgements{We thank Takehiko Yasuda for the conversation that led to \autoref{YasudaShiftConvention}. We also thank Dan Abramovich, Donu Arapura, Oishee Banerjee, and Dori Bejleri for helpful conversations.}

\section{Preliminaries}

For the remainder of this paper, we will let $k$ be an algebraically closed field of characteristic 0, and for any stack $\cY$, we will let $|\cY|$ denote its associated topological space.

\subsection{Grothendieck rings of varieties and stacks}\label{preliminariesGrothendieckRings}

We will set some notation that will be used throughout this paper. We will let $\widehat{\sM}_k$ denote the ring obtained from the Grothendieck ring of varieties over $k$ by inverting the class of $\bA^1_k$ and then completing with respect to the dimension filtration, we will let $\bL \in \widehat{\sM}_k$ denote the class of $\bA^1_k$, and we will let $\Vert \cdot \Vert: \widehat{\sM}_k \to \R_{\geq 0}$ denote the norm given by the dimension filtration, normalized so that the class of any finite type scheme $Y$ over $k$ has norm equal to $\exp(\dim Y)$. We will let $\widehat{K_0(\MHS)}$ denote the ring obtained from the Grothendieck ring of polarizable mixed Hodge structures by completing with respect to the filtration induced by weight of a mixed Hodge structure, and for any polarizable mixed Hodge structure $V$, we will let $[V]$ denote its class in $\widehat{K_0(\MHS)}$. We will let $\Vert \cdot \Vert: \widehat{K_0(\MHS)} \to \R_{\geq 0}$ denote the norm given by the filtration, normalized so that the class of any pure Hodge structure of weight $2d$ has norm equal to $\exp(d)$. We will let $\chi_{\Hdg}: \widehat{\sM}_\C \to \widehat{K_0(\MHS)}$ denote the unique continuous ring homomorphism that takes the class of any finite type scheme $Y$ over $\C$ to $\sum_{i \in \Z_{\geq 0}} (-1)^i[H^i_c(Y)]$. Note that for any $\Theta \in \widehat{\sM}_\C$, we have $\Vert \Theta \Vert \geq \Vert \chi_{\Hdg}(\Theta) \Vert$.

We will let $\Z\llparenthesis x, y \rrparenthesis$ denote the result of completing $\Z[x^{\pm 1}, y^{\pm 1}]$ with respect to the filtration induced by lowest total degree. Therefore elements of $\Z\llparenthesis x, y \rrparenthesis$ are of the form $\sum_{n, m \in \Z} a_{n,m}x^n y^m$ with $a_{n,m} \in \Z$ such that for any $d \in \Z$, we have $a_{n,m} = 0$ for all but finitely many $n,m$ with $n + m \leq d$. We will then set
\[
	\Z \lPuiseux x, y \rPuiseux = \bigcup_{m \in \Z_{>0}} \Z\llparenthesis x^{1/m}, y^{1/m} \rrparenthesis.
\]
For any polarizable mixed Hodge structure $V$, we will denote its Hodge--Deligne invariant by
\[
	\HD(V) = \sum_{p,q \in \Z} h^{p,q}(V)u^p v^q \in \Z[u^{\pm 1}, v^{\pm 1}],
\]
where $h^{p,q}(V)$ is the dimension of the $(p,q)$ piece of $V$. 
\begin{remark}
In the statements of \autoref{theoremStringyCohomologyGivesStringyHD} and \autoref{corollaryStringyCohomologyCrepantResolution}, $H^{\even,i}_{\str}(\cX)$ and $H^{\odd,i}_{\str}(\cX)$ may have pieces in fractional degree. For such a fractional mixed Hodge structure $V$, we set
\[
	\HD(V) = \sum_{p,q \in \Q} h^{p,q}(V)u^p v^q \in \bigcup_{m \in \Z_{>0}}\Z[u^{\pm 1/m}, v^{\pm 1/m}].
\]
\end{remark}

We then let $\HD: \widehat{\sM}_\C \to \Z\llparenthesis u^{-1}, v^{-1} \rrparenthesis$ denote the composition of $\chi_{\Hdg}$ with the unique continuous ring homomorphism $\widehat{K_0(\MHS)} \to \Z\llparenthesis u^{-1}, v^{-1} \rrparenthesis$ that takes the class of any polarizable mixed Hodge structure $V$ to $\HD(V)$. In particular, $\HD: \widehat{\sM}_\C \to \Z\llparenthesis u^{-1}, v^{-1} \rrparenthesis$ takes the class of any finite type scheme over $\C$ to its Hodge--Deligne polynomial. For any $m \in \Z_{>0}$ we will also let $\HD: \widehat{\sM}_\C[\bL^{1/m}] \to \Z \lPuiseux u^{-1}, v^{-1} \rPuiseux$ denote the ring homomorphism that agrees with $\HD: \widehat{\sM}_\C \to \Z\llparenthesis u^{-1}, v^{-1} \rrparenthesis$ on $\widehat{\sM}_\C$ and takes $\bL^{1/m}$ to $(uv)^{1/m}$.

We will let $K_0(\Stack_k)$ denote the Grothendieck ring of Artin stacks over $k$ in the sense of \cite{Ekedahl}. There is a unique ring homomorphism $K_0(\Stack_k) \to \widehat{\sM}_k$ that takes the class in $K_0(\Stack_k)$ of any finite type scheme over $k$ to its class in $\widehat{\sM}_k$. For any finite type stack $\cY$ over $k$ with affine geometric stabilizers, we will let $\e(\cY) \in \widehat{\sM}_k$ denote the image under $K_0(\Stack_k) \to \widehat{\sM}_k$ of the class of $\cY$ in $K_0(\Stack_k)$. We recall the following proposition that will be useful throughout this paper.

\begin{proposition}[{\cite[Proposition 4.4]{SatrianoUsatine5}}]\label{RecallingNormAndDimension}
If $\cY$ is a finite type Artin stack over $k$ with affine geometric stabilizers, then
\[
	 \Vert \e(\cY) \Vert = \exp(\dim\cY).
\]
\end{proposition}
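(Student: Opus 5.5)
The plan is to reduce to global quotient stacks by $\GL_n$, where the class can be computed explicitly. The upper and lower bounds on the norm will then be proved separately: the upper bound from the dimension filtration, the lower bound by passing to Hodge--Deligne invariants, where no cancellation can happen among top-dimensional pieces.

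\emph{Step 1 (stratification).} Since $\cY$ is finite type with affine geometric stabilizers and $k$ has characteristic $0$, Kresch's stratification result gives a finite partition of $|\cY|$ into locally closed substacks $\cY_j \cong [X_j/\GL_{n_j}]$, with $X_j$ a finite type scheme (even quasi-projective). By the scissor relations in $K_0(\Stack_k)$, $\e(\cY) = \sum_j \e(\cY_j)$. We also have $\dim \cY = \max_j \dim \cY_j$ and $\dim\cY_j = \dim X_j - n_j^2$.

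\emph{Step 2 (quotients by $\GL_n$).} Because $\GL_n$ is special, Ekedahl's results give $\e([X/\GL_n]) = \e(X)\,\e(\GL_n)^{-1}$ in $\widehat{\sM}_k$. Moreover,
\[
\e(\GL_n)=\bL^{n^2}\prod_{i=1}^n(1-\bL^{-i}).
\]
Each factor $1-\bL^{-i}$ is a unit of norm $1$ whose inverse $\sum_{r\ge0}\bL^{-ir}$ also has norm $1$. The norm is non-archimedean and submultiplicative, so $\Vert \e([X/\GL_n])\Vert \le \Vert\e(X)\Vert \exp(-n^2)$. For a scheme $X$ we have $\Vert \e(X)\Vert\le \exp(\dim X)$ directly from the definition of the dimension filtration. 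Summing over strata, the ultrametric inequality yields the upper bound $\Vert\e(\cY)\Vert\le\exp(\dim\cY)$.

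\emph{Step 3 (lower bound, the main point).} The danger is cancellation among the top-dimensional strata. To rule it out, I will pass to Hodge--Deligne invariants, using $\Vert\Theta\Vert\ge\Vert\chi_{\Hdg}(\Theta)\Vert$ and the compatibility of the norm with the total degree in $u,v$.

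For a finite type scheme $X$ of dimension $d$, $\HD(X)$ has top-degree term $c\,(uv)^d$, where $c>0$ is the number of $d$-dimensional irreducible components. This comes from $H^{2d}_c$, which is pure of type $(d,d)$. Next, $\HD(\GL_n)^{-1} = (uv)^{-n^2}\prod_i(1-(uv)^{-i})^{-1}$ has leading term $(uv)^{-n^2}$ with coefficient $1$. Hence $\HD(\cY_j)$ has leading term $c_j (uv)^{\dim \cY_j}$ with $c_j>0$.

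Summing over $j$, the coefficient of $(uv)^{\dim\cY}$ in $\HD(\e(\cY))$ is $\sum_{\dim\cY_j=\dim\cY}c_j>0$, and no terms of higher degree occur. Thus $\HD(\e(\cY))$ has norm exactly $\exp(\dim\cY)$, which gives $\Vert\e(\cY)\Vert\ge\exp(\dim\cY)$.

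The main obstacle I anticipate is Step 1, the existence of the $\GL_n$-quotient stratification for stacks with merely affine geometric stabilizers. I would cite Kresch's theorem, or alternatively reduce via the Ekedahl/Bergh-type result that such stacks are stratified by quotients of schemes by linear algebraic groups, and then use that $[X/G]\cong[(X\times^G\GL_n)/\GL_n]$ after embedding $G\subset\GL_n$.
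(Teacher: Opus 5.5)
Your argument is correct when $k=\C$. The paper gives no proof of its own here; it quotes \cite[Proposition 4.4]{SatrianoUsatine5}. Your route is: stratify by $\GL_n$-quotients, get the upper bound from the dimension filtration, and rule out cancellation by a top-coefficient positivity computation. This matches the argument the paper invokes for the closely related \autoref{lemmaDiagonalTermsOfHDPolyOfStack} via \cite[Lemma 4.2]{SatrianoUsatine5}, and that lemma's content is essentially your Step 3.

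The one step that does not go through as written is the choice of realization. The maps $\chi_{\Hdg}$ and $\HD$ are only defined over $\C$ in this paper. The proposition, however, is stated for an arbitrary algebraically closed field $k$ of characteristic $0$, and it is used in that generality, e.g.\ in \autoref{containedConstructiblesBound} and \autoref{representableConstructibleBound}. Such a $k$ need not embed in $\C$, and in any case base change maps go the wrong way for bounding a norm in $\widehat{\sM}_k$ from below. So Step 3 needs either a realization defined on $\widehat{\sM}_k$ itself, or no realization at all. For the first option, you could use a Hodge--Deligne polynomial built from Bittner's presentation of $K_0(\Var_k)$ and the algebraic Hodge numbers $\dim H^q(X,\Omega^p_X)$ of smooth projective $X$.

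The second option costs nothing given your Step 2. Let $d=\dim\cY$ and $N=\max_j n_j^2$. Since $\prod_{i=1}^{n}(1-\bL^{-i})^{-1}-1$ has norm at most $e^{-1}$, your estimates give
\[
\Vert \e(\cY)-\sum_{\dim\cY_j=d}\e(X_j)\bL^{-n_j^2}\Vert<\exp(d).
\]
On the other hand, $\bL^{N}\sum_{\dim\cY_j=d}\e(X_j)\bL^{-n_j^2}$ is the class of the scheme $\bigsqcup_{\dim\cY_j=d}X_j\times\bA^{N-n_j^2}_k$, which has dimension $d+N$. Multiplication by the unit $\bL$ multiplies norms by exactly $e$. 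A scheme class has norm $\exp$ of its dimension by the normalization in \autoref{preliminariesGrothendieckRings}. Hence the sum has norm exactly $\exp(d)$, and the strict ultrametric inequality gives $\Vert\e(\cY)\Vert=\exp(d)$. This pushes the no-cancellation issue among top-dimensional strata down to the scheme case, where a disjoint union absorbs it, and it works over any $k$.
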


\begin{remark}
In \autoref{RecallingNormAndDimension}, we are using the convention that $\dim\emptyset = -\infty$ and $\exp(-\infty) = 0$.
\end{remark}

\subsection{Motivic integration for Artin stacks}\label{preliminariesMotivicIntegration}

In this subsection, we recall the notation we will use for motivic integration over twisted arcs of Artin stacks. We refer to \cite{SatrianoUsatine6} for further details. Throughout this subsection, let $\cX$ be a finite type Artin stack over $k$ with affine diagonal.

For any $n \in \Z_{\geq 0}$ and $\ell \in \Z_{>0}$, we will let $\cD^\ell_{n}$ denote the stack quotient $[\Spec( k[t^{1/\ell}]/(t^{n+1}) ) / \mu_\ell]$, where $\xi \in \mu_\ell$ acts on $k[t^{1/\ell}]/(t^{n+1})$ by $f(t^{1/\ell}) \mapsto f(\xi t^{1/\ell})$. We then let
\[
	\sJ_n^\ell(\cX) = \uHom^{\rep}_k(\cD^\ell_{n}, \cX)
\]
be \emph{the stack of twisted $n$-jets of $\cX$ of order $\ell$}, which is a finite type Artin stack over $k$ with affine diagonal \cite[Remark 4.4]{SatrianoUsatine6}, and for any $m \leq n$, we let $\theta^n_m: \sJ_n^\ell(\cX) \to \sJ_m^\ell(\cX)$ denote the truncation map induced by the closed immersion $\cD^\ell_m \hookrightarrow \cD^\ell_n$. In the special case where $\ell = 1$, we will sometimes use the notation $\sL_n(\cX) = \sJ_n^1(\cX)$. We will also set
\[
	\sJ_n(\cX) = \bigsqcup_{\ell \in \Z_{>0}} \sJ_n^\ell(\cX),
\]
and each $\theta^n_m: \sJ_n(\cX) \to \sJ_m(\cX)$ will denote the obvious map. We will denote the \emph{stack of twisted arcs of $\cX$ of order $\ell$} by
\[
	\sJ^\ell(\cX) = \varprojlim_n \sJ_n^\ell(\cX)
\]
and let each $\theta_n: \sJ^\ell(\cX) \to \sJ_n(\cX)$ denote the canonical map. We will also set
\[
	\sJ(\cX) = \bigsqcup_{\ell \in \Z_{>0}} \sJ^\ell(\cX)
\]
and each $\theta_n: \sJ(\cX) \to \sJ_n(\cX)$ will denote the obvious map. We call a subset $\cC \subset |\sJ(\cX)|$ \emph{bounded} if $\cC \cap |\sJ^\ell(\cX)| = \emptyset$ for all but finitely many $\ell$. We call a subset $\cC \subset |\sJ(\cX)|$ a \emph{cylinder} if it is the preimage along $\theta_n$ of a locally constructible subset of $|\sJ_n(\cX)|$ for some $n$. 

For the remainder of this subsection, we will additionally assume that $\cX$ is equidimensional, smooth over $k$, and has a good moduli space. Then to each bounded cylinder $\cC \subset \sJ(\cX)$, there is an associated \emph{motivic volume} $\nu_\cX(\cC) \in \widehat{\sM}_k$ as defined in \cite[Definition 4.13]{SatrianoUsatine6}. We will also use the notion of \emph{measurable} subset $\cA \subset |\sJ(\cX)|$ and its \emph{motivic volume} $\nu_\cX(\cA)$. Measurable subsets are those subsets that are approximated by bounded cylinders in a precise sense, and we refer to \cite[Definition 4.23]{SatrianoUsatine6} for details. If $m \in \Z_{>0}$, $\cA \subset |\sJ(\cX)|$, and $f: \cA \to (1/m)\Z \cup \{\infty\}$ is a function such that $f^{-1}(w)$ is measurable for all $w \in (1/m)\Z$ and such that
\[
	\sum_{w \in (1/m)\Z} \bL^w \nu_\cX(f^{-1}(w))
\]
converges in $\widehat{\sM}_k[\bL^{1/m}]$, we say $\bL^f$ is \emph{integrable} (on $\cA$) and set
\[
	\int_\cA \bL^{f} \diff\nu_\cX = \sum_{w \in (1/m)\Z} \bL^w \nu_\cX(f^{-1}(w)).
\]

We will let $\wt_\cX: |\sJ(\cX)| \to \Q$ denote the \emph{weight function} as defined in \cite[Definition 4.20]{SatrianoUsatine6}. Finally when $\cX$ has a stable good moduli space, we will let $\cA_\cX \subset |\sJ(\cX)|$ denote the image of $|\sJ(\cX')|$ in $|\sJ(\cX)|$, where $\cX' \to \cX$ is the canonical reduction of stabilizers of Edidin--Rydh \cite{EdidinRydh}.

\subsection{Compactly supported cohomology of Artin stacks}

We will need a notion of compactly supported cohomology, endowed with a polarizable mixed Hodge structure, for finite type Artin stacks over $\C$. For this notion, we will use Tubach's theory of (the derived category of) mixed Hodge modules for Artin stacks. In this subsection, we will collect the necessary properties we will use from this theory. We claim no originality to the results in this subsection and emphasize that the statements here are all either explicitly stated in \cite{Tubach2} or, at the very least, are immediate consequences of results in loc. cit.

\begin{notation}
In \cite{Tubach1}, Tubach defines for any finite type scheme $X$ over $\C$ an $\infty$-categorical enhancement $D^b(\MHM(X))$ of Saito's bounded derived category of mixed Hodge modules, and in \cite{Tubach2}, extends the indization of $D^b(\MHM(-))$ (along with the accompanying six functor formalism) to Artin stacks. Following Tubach's notation, if $\cX$ is a locally finite type Artin stack over $\C$, we let $D_H(\cX)$ denote the resulting $\infty$-category, we let $D_{H,c}(\cX)$ denote the $\infty$-category of \emph{cohomologically constructible mixed Hodge modules} over $\cX$ as in \cite[Definition 3.5]{Tubach2}, and for any object $K$ of $D_H(\cX)$ and $n \in \Z$, we let $H^n(K)$ denote the $n$th cohomology object with respect to the $t$-structure in \cite[Proposition 3.4]{Tubach2}. Furthermore we let $\Q_\cX \in D_H(\cX)$ denote $s^*\Q$, where $s: \cX \to \Spec(\C)$ is the structure map. We will identify the homotopy category of the heart of $D^b(\MHM(\Spec(\C))$ with the category $\MHS$ of polarizable mixed Hodge structures using the canonical equivalence.
\end{notation}

\begin{remark}\label{remarkCohomologyObjectIsMixedHodgeStructure}
By the definition of $D_{H,c}(\Spec(\C))$, for any object $K \in D_{H,c}(\Spec(\C))$, each cohomology object $H^n(K)$ is in the heart of $D^b(\MHM(\Spec(\C)))$ and is therefore a polarizable mixed Hodge structure.
\end{remark}

\begin{definition}\label{definitionOfStackCohomology}
Let $\cX$ be a locally finite type Artin stack over $\C$, and let $i \in \Z$. The $i$th \emph{cohomology with rational coefficients} of $\cX$ is
\[
	H^i(\cX) = H^i(s_*\Q_\cX),
\]
and the $i$th \emph{compactly supported cohomology with rational coefficients} of $\cX$ is
\[
	H^i_c(\cX) = H^i(s_!\Q_\cX),
\]
where $s: \cX \to \Spec(\C)$ is the structure map. By \cite[Theorem 3.8]{Tubach2}, $s_*\Q_\cX$ and $s_!\Q_\cX$ are objects of $D_{H,c}(\Spec(\C))$, so $H^i(\cX)$ and $H^i_c(\cX)$ are polarizable mixed Hodge structures by \autoref{remarkCohomologyObjectIsMixedHodgeStructure}.
\end{definition}

\begin{proposition}\label{excisionExactSequence}
Let $\cX$ be a locally finite type Artin stack over $\C$, let $\cK \in D_H(\cX)$, let $i: \cZ \to \cX$ be a closed substack, and let $j: \cU \to \cX$ be the inclusion of the open complement of $\cZ$ in $\cX$. Then 
\[
	j_!j^!\cK \xrightarrow{\text{co-unit}} \cK \xrightarrow{\text{unit}} i_*i^*\cK
\]
is a distinguished triangle. In particular, there is a long exact sequence
\[
	\dots \to H_c^i(\cU) \to H_c^i(\cX) \to H_c^i(\cZ) \to H_c^{i+1}(\cU) \to \cdots.
\]
\end{proposition}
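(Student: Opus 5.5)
The plan is to reduce the statement to the corresponding fact for schemes via the six-functor formalism of \cite{Tubach2}, which we take as given for Artin stacks. The triangle $j_!j^!\cK \to \cK \to i_*i^*\cK$ is the standard localization sequence. It exists for schemes in Saito's theory, and Tubach's enhancement $D^b(\MHM(-))$ carries it. So the main task is to extend it to Artin stacks.

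The first step is to check that the composite of the co-unit and the unit vanishes and that the resulting square is cartesian. Both can be tested after applying the conservative family of pullbacks $p^*$ along smooth morphisms $p: X \to \cX$ from schemes. This works because Tubach defines $D_H(\cX)$ as a limit over the smooth site, with $*$-pullbacks (equivalently $!$-pullbacks up to shift and twist) as transition maps. Being a cofiber sequence in a stable $\infty$-category is detected by a jointly conservative family of exact functors, so this reduction is enough.

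The second step is base change for the functors involved. For a smooth cover $p: X \to \cX$, form the base changes $Z = X \times_\cX \cZ$ and $U = X\times_\cX \cU$. Then:
\begin{itemize}
\item smooth base change gives $p^* i_* \simeq i'_* p_Z^*$ and $p^* j_! \simeq j'_! p_U^*$;
\item since $j$ is an open immersion, $j^! = j^*$, and this commutes with $p^*$.
\end{itemize}
Under these identifications, the units and co-units are also compatible. So $p^*$ of the sequence is the localization sequence on $X$ for $p^*\cK$, which is a cofiber sequence by the scheme case (after ind-completion, since $D_H$ is the indization and colimits preserve cofiber sequences). I expect the main obstacle to be verifying that these base change equivalences are compatible with the adjunction maps. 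This should be part of the formalism in \cite{Tubach2}, constructed via Liu--Zheng-style gluing.

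The final step produces the long exact sequence. Take $\cK = \Q_\cX$ and apply $s_!$, where $s$ is the structure map. We have:
\begin{itemize}
\item $s_!j_!j^*\Q_\cX = (s\circ j)_!\Q_\cU$;
\item $s_!i_*i^*\Q_\cX = (s\circ i)_!\Q_\cZ$, using $i_* = i_!$ for a closed immersion.
\end{itemize}
This gives a distinguished triangle in $D_H(\Spec \C)$. Taking the cohomology objects $H^n$ with respect to the $t$-structure then yields the stated long exact sequence of mixed Hodge structures.
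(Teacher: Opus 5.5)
Your argument is correct in outline, but it takes a longer route than the paper for the first statement. The paper does not reprove the localization triangle for stacks. It cites \cite[Theorem 3.1(5)]{Tubach2}, where the triangle $j_!j^!\cK \to \cK \to i_*i^*\cK$ is already part of the formalism for $D_H$ of Artin stacks. It then derives the long exact sequence exactly as in your last step, using $j^! \cong j^*$ (purity, \cite[Theorem 3.1(7)]{Tubach2}) and $i_* \cong i_!$ (\cite[Theorem 3.1(4)]{Tubach2}).

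You instead re-derive the stacky triangle from Saito's scheme-level triangle by smooth descent. The ingredients are conservativity of the smooth pullbacks $p^*$, base change for $i_*$ and $j_!$, and compatibility of these equivalences with the units and co-units. This has the merit of explaining where the statement comes from. However, it draws on the same formalism: base change, purity, conservativity, and the coherence of base change with adjunction maps, which you yourself defer to \cite{Tubach2}. Once that formalism is granted, the localization triangle is already available, so the descent argument is redundant.

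One point of precision if you keep it: conservativity detects equivalences, not null-homotopies. The cleaner way to handle the composite is to note that $\mathrm{Map}(j_!j^!\cK, i_*i^*\cK) \simeq \mathrm{Map}(i^*j_!j^!\cK, i^*\cK)$ is contractible. This holds because $i^*j_! \simeq 0$ by base change, as $\cZ \times_\cX \cU$ is empty. You then get a canonical map from the cofiber of the co-unit to $i_*i^*\cK$, and only its being an equivalence needs to be checked after applying the $p^*$.
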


\begin{proof}
The first statement follows immediately from \cite[Theorem 3.1(5)]{Tubach2}. The second statement follows from the first by setting $\cK = \Q_\cX$ and using that $j^! \cong j^*$ by \cite[Theorem 3.1(7)]{Tubach2} and that $i_* \cong i_!$ by \cite[Theorem 3.1(4)]{Tubach2}.
\end{proof}

\begin{proposition}\label{cohomologyVanishingAboveTwiceDimension}
Let $\cX$ be a finite type Artin stack over $\C$. If $i > 2\dim\cX$, then $H_c^i(\cX) = 0$.
\end{proposition}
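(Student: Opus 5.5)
Since $\dim\cX$ is finite, the statement is really about stacks, not cohomology. I will reduce it to the corresponding vanishing for schemes by combining a stratification argument with the exact sequence of \autoref{excisionExactSequence}, and then use a smooth atlas together with the six-functor formalism to pass from schemes to stacks.

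\emph{Step 1: reduce to quotient-type pieces by noetherian induction.} A finite type Artin stack over $\C$ with affine stabilizers admits a stratification by locally closed substacks of the form $[U/\GL_N]$ with $U$ quasi-projective. Without the affine stabilizer assumption, I would use instead a stratification by gerbes over schemes, or by stacks having a smooth surjection from a scheme of relative dimension $r$. Choose a nonempty open substack $\cU\subset\cX$ of this nice form, with closed complement $\cZ$. The long exact sequence of \autoref{excisionExactSequence},
\[
H_c^i(\cU)\to H_c^i(\cX)\to H_c^i(\cZ),
\]
shows that it suffices to prove vanishing for $\cU$ and $\cZ$ when $i>2\dim\cX$. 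Since $\dim\cU,\dim\cZ\le\dim\cX$ and $\cZ$ is smaller, noetherian induction reduces us to the case $\cX=\cU$.

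\emph{Step 2: handle the nice piece.} Take a smooth surjection $p\colon V\to\cX$ from a scheme $V$ of constant relative dimension $r$. Then $\dim V=\dim\cX+r$. I would try to realize $H_c^*(\cX)$ as a limit or descent of compactly supported cohomology of the Čech nerve $V_\bullet$. The cleanest route uses the identity $p^!\cong p^*[2r](r)$ from Tubach's formalism (\cite[Theorem 3.1]{Tubach2}) together with the co-unit $p_!p^!\Q_\cX\to\Q_\cX$.

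In the quotient case $\cX=[U/G]$ with $G=\GL_N$, I would instead take $V=U\times^G E$ with $E\to B\GL_N$ the frame bundle approximation. More simply, use the $G$-torsor $U\to[U/G]$: the map $U\to\cX$ is a $G$-torsor, and
\[
H_c^*(U)\cong H_c^*(\cX)\otimes H_c^*(G)
\]
in a suitable sense. The top degree of $H_c^*(G)$ is $2\dim G$ and its class is nonzero. So if $H_c^i(\cX)$ were nonzero for some $i>2\dim\cX$, then taking the top Künneth-type component would give $H_c^{i+2\dim G}(U)\neq 0$ with
\[
i+2\dim G>2\dim U.
\]
This contradicts the scheme case, where compactly supported cohomology of a variety vanishes above twice its dimension.

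\emph{Main obstacle.} The hard step is making the torsor and Künneth argument, or its Leray spectral sequence replacement, rigorous inside Tubach's $D_H$. I would first try the Leray spectral sequence for $\pi_!$ with $\pi\colon U\to[U/G]$. Its fibres are $G$, and $R^{2\dim G}\pi_!\Q\cong\Q(-\dim G)$ is the top term. The spectral sequence has a corner at the top row, so the maximal nonzero $H_c^i(\cX)$ survives to $H_c^{i+2\dim G}(U)$. The subtle point is that $H_c^*(\cX)$ can be unbounded below for stacks, but only upper boundedness is needed here, so the corner argument still works.
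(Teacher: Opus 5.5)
Your Step 2 has a genuine gap. The corner argument in the Leray spectral sequence for $\pi\colon U\to[U/G]$ needs a \emph{largest} degree $i_0$ with $H^{i_0}_c(\cX)\neq 0$. In other words, it presupposes that $H^*_c(\cX)$ is bounded above. You say only upper boundedness is needed, and that is right, but nothing in your argument supplies it, and for stacks it is essentially the content of the proposition. Without it the spectral sequence gives no conclusion. For $p>2\dim\cX$ you only learn that $E_\infty^{p,2\dim G}=0$. This means the top-row term $H^p_c(\cX)(-\dim G)$ must be killed by differentials into subquotients of $H^{p+r}_c(\cX)\otimes H^{2\dim G-r+1}_c(G)$ with $r\geq 2$, so a descending induction on $p$ has no starting point.

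The case $G=\bG_m$ makes this transparent. Let $U=L\setminus 0$ with $L$ the associated line bundle. Then \autoref{excisionExactSequence} together with $H^i_c(L)\cong H^{i-2}_c(\cX)(-1)$ gives the Gysin sequence
\[
H^i_c(U)\to H^{i-2}_c(\cX)(-1)\to H^i_c(\cX)\to H^{i+1}_c(U).
\]
So the vanishing of $H^i_c(U)$ for $i>2\dim U$ only says that $H^{i-2}_c(\cX)(-1)\to H^i_c(\cX)$ is an isomorphism for large $i$. That is perfectly compatible with nonvanishing in every large even degree.

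Two secondary issues:
\begin{enumerate}
\item The Künneth isomorphism you first write is false in general; it already fails for $\C^N\setminus 0\to\mathbb{P}^{N-1}$. The Leray version is the right replacement, but it does not fix the boundedness problem.
\item The proposition assumes no affine stabilizers, so the stratification by $[U/\GL_N]$ is unavailable in general, and your gerbe alternative is not carried out.
\end{enumerate}

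The missing input is a duality or descent statement, and this is what the paper uses. By \autoref{excisionExactSequence} one reduces to $\cX$ smooth and equidimensional. Verdier duality plus purity then identify $H^i_c(\cX)$ with the dual of $H^{2\dim\cX-i}(\cX)$, up to a Tate twist. So it suffices that $H^j(\cX)=0$ for $j<0$, which holds because $H^*(\cX)$ is the cohomology of a simplicial scheme (the \v{C}ech nerve of an atlas).

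Alternatively, the route you mention at the start of Step 2 and then abandon works directly, with no stratification and no stabilizer hypothesis:
\begin{enumerate}
\item Take an atlas $p\colon V\to\cX$ of constant relative dimension $r$, with \v{C}ech nerve $V_\bullet$ of finite type algebraic spaces and structure maps $s_n\colon V_n\to\Spec(\C)$.
\item By $!$-descent and purity, $s_!\Q_\cX$ is the geometric realization of the objects $(s_n)_!\Q_{V_n}((n+1)r)[2(n+1)r]$.
\item Each of these is concentrated in degrees $\le 2\dim V_n-2(n+1)r=2\dim\cX$.
\item $D^{\le 2\dim\cX}$ is closed under colimits, so $H^i_c(\cX)=0$ for $i>2\dim\cX$.
\end{enumerate}
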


\begin{proof}
By \autoref{excisionExactSequence}, we may assume that $\cX$ is smooth and equidimensional. Then by Verdier duality \cite[Paragraph after Definition 3.11]{Tubach2} and the purity isomorphism \cite[Theorem 3.1(7)]{Tubach2}, it is sufficient to show that $H^i(\cX) = 0$ for all $i < 0$. But this follows from the definition of $D_H(\cX)$, which in particular gives that $H^i(\cX)$ is the $i$th cohomology of a simplicial scheme.
\end{proof}

\begin{proposition}\label{pqPieceVanishesAboveDegree}
Let $\cX$ be a finite type Artin stack over $\C$ with affine geometric stabilizers. If $p + q > i$, then $h^{p,q}(H_c^i(\cX)) = 0$.
\end{proposition}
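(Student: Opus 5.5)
Throughout, ``weight'' means the weight filtration of a mixed Hodge structure. The statement says that $H_c^i(\cX)$ has weights $\leq i$, since $h^{p,q}\neq 0$ only in $\mathrm{Gr}^W_{p+q}$. For finite type schemes (or algebraic spaces) this is Deligne's theorem from Hodge III. The plan is to reduce the stack case to it using a stratification by quotient stacks and then a Borel-type approximation.

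First, stratify. Since $\cX$ has affine geometric stabilizers, by Kresch's result $\cX$ admits a stratification by locally closed substacks of the form $[U/\GL_N]$ with $U$ a quasi-projective scheme. In the long exact sequence of \autoref{excisionExactSequence}, $H_c^i(\cX)$ sits between $H_c^i(\cU)$ and $H_c^i(\cZ)$. Morphisms of mixed Hodge structures are strict for the weight filtration, so the property ``weights $\leq i$'' is preserved by subobjects, quotients and extensions. By noetherian induction on strata, it therefore suffices to treat $\cX = [U/G]$ with $G=\GL_N$ and $U$ a finite type scheme. Further stratifying $U$ by smooth $G$-stable locally closed pieces, we may also assume $U$ is smooth and equidimensional.

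Second, use Verdier duality, as in the proof of \autoref{cohomologyVanishingAboveTwiceDimension}. For $\cX$ smooth of dimension $d$, $H_c^i(\cX)\cong H^{2d-i}(\cX)^\vee(-d)$. So it suffices to show that $H^j(\cX)$ has weights $\geq j$ for all $j$. To see this, take approximations $V$ given by open subsets of representations of $G$ whose complements have large codimension $c$ and on which $G$ acts freely. Then $U\times^G V$ is a smooth scheme (or algebraic space), and $U\times^G V\to \cX$ is a smooth morphism with fibers open in affine space. Its complement in the vector bundle $U\times^G \mathrm{Rep}$ has high codimension, so pullback gives an isomorphism $H^j(\cX)\cong H^j(U\times^G V)$ for $j<2c-1$, compatible with mixed Hodge structures. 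I expect the main obstacle to be justifying this Hodge compatibility in Tubach's framework. I would derive it from homotopy invariance for vector bundles and excision, \autoref{excisionExactSequence}, both available in \cite{Tubach2}. By Deligne, the cohomology of a smooth variety has weights $\geq j$ in degree $j$, which proves the claim.

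Alternatively, one could avoid duality and work directly with compact supports. Using the same approximation, $H_c^{i+2\dim V}(U\times^G V)(\dim V)$ agrees with $H_c^i(\cX)$ in a range of degrees via the vector bundle $[U\times\mathrm{Rep}/G]\to\cX$. Compactly supported cohomology of a variety has weights $\leq$ degree, and the Tate twist shifts both the degree and the weight by $2\dim V$. This gives weights $\leq i$ for $H_c^i(\cX)$, and hence the vanishing $h^{p,q}(H_c^i(\cX))=0$ for $p+q>i$.
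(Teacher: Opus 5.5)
Your proposal is correct, but it takes a genuinely different route from the paper. The paper's proof is a one-line citation of \cite[Proposition 3.22]{Tubach2}, which already supplies the weight bound for compactly supported cohomology of finite type stacks with affine stabilizers inside Tubach's formalism. You instead rederive the bound from Deligne's theorem for varieties, using Kresch's stratification into quotients $[U/\GL_N]$, strictness of morphisms of mixed Hodge structures along the excision sequence, and Borel-type approximation.

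Of your two variants, the compact-support one is the more economical, since it needs neither smoothness nor duality. Besides Kresch and Deligne (extended to the algebraic space $U\times^{G}V$ by stratifying it into schemes), it uses only three results from the paper:
\autoref{excisionExactSequence};
\autoref{cohomologyVanishingAboveTwiceDimension}, to kill $H_c$ of the complement $[U\times(\mathrm{Rep}\setminus V)/\GL_N]$ in the relevant degrees once the codimension $c$ is large relative to $i$;
and the vector-bundle case of the co-unit isomorphism, \autoref{propCoUnitIsoLocallyAffineBundle}.
None of these depends on the statement, so there is no circularity with its later use in \autoref{ChiHdgDoesWhatWeExpect}.

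In the duality variant, the Hodge compatibility you flag is automatic, because every map comes from a morphism in $D_H$. The input you actually need is the vanishing, in degrees $j<2c$, of the cohomology of $[U\times\mathrm{Rep}/\GL_N]$ with supports in that complement. This does not follow from homotopy invariance and excision alone. It follows from duality on the smooth ambient stack combined with \autoref{cohomologyVanishingAboveTwiceDimension}.

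The citation buys brevity. Your argument buys a self-contained proof that shows exactly where the affine-stabilizer hypothesis enters, namely in Kresch's stratification. Without that hypothesis the bound fails: for $BA$ with $A$ an abelian variety, $H^2(BA)\cong H^1(A)$ has weight $1<2$, which by duality violates the bound for $H_c$.
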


\begin{proof}
This follows immediately from \cite[Proposition 3.22]{Tubach2}
\end{proof}

\subsection{Orbifold cohomology}\label{subsectionOrbifoldCohomologyPreliminaries}

In this subsection, we will recall the definition of orbifold cohomology that will be used in this paper. Let $\cX$ be a smooth finite type equidimensional tame Artin stack over $\C$ with affine diagonal.

\begin{notation}
Let $\ell \in \Z_{>0}$, let $k'$ be a field extension of $k$, and let $f: B\mu_\ell \otimes_{k} k' \to \cX$ be a morphism. The pullback $f^*T_\cX$ of the tangent bundle of $\cX$ is a rank $\dim\cX$ vector bundle on $B\mu_\ell \otimes_k k'$ and is therefore a $\dim\cX$-dimensional vector space over $k'$ equipped with a $\mu_\ell$-action. There exist $a_1, \dots, a_{\dim\cX} \in \{1, \dots, \ell\}$ and a basis $v_1, \dots, v_{\dim\cX}$ for $f^*T_\cX$ such that for all $i \in \{1, \dots, \dim\cX\}$, each $\xi \in \mu_\ell$ sends $v_i$ to $\xi^{-a_i} v_i$. Therefore,
\[
	f^*T_\cX \cong \bigoplus_{i=1}^{\dim\cX} \cO_{B\mu_{\ell} \otimes_k k'}(a_i).
\]
We set
\[
	\shft_\cX(f) = \dim\cX - (1/\ell)\sum_{i =1}^{\dim\cX}a_i.
\]
It is well known that if $k''$ is a field extension of $k$ and $g: B\mu_\ell \otimes_{k} k'' \to \cX$ is a morphism in the same connected component of the cyclotomic inertia stack $I_{\mu_\ell}(\cX)$ as $f$, then $\shft_\cX(f) = \shft_\cX(g)$. See e.g., \cite[Definition 3.8]{Yasuda2004} or \cite[Proof of Proposition 4.19]{SatrianoUsatine6}. Thus if $\cY$ is a connected component of $I_{\mu_\ell}(\cX)$, we set
\[
	\shft_\cX(\cY) = \shft_\cX(f),
\]
where $f: B\mu_\ell \otimes_{k} k' \to \cX$ is a point of $\cY$.
\end{notation}

\begin{remark}\label{YasudaShiftConvention}
We note that in \cite{Yasuda2004, Yasuda}, the action that Yasuda considered on the tangent space is the inverse of the action we consider above (see e.g., \cite[Proof of Lemma 68]{Yasuda}), so the number $\shft_\cX(\cY)$ coincides with the shift defined in \cite{Yasuda2004, Yasuda}.
\end{remark}

\begin{definition}
For each $i \in \Q$, set
\[
	H^{\even, i}_{\orb}(\cX) = \bigoplus_{\cY} H_c^{i-2\shft_\cX(\cY)}(\cY)(-\shft_\cX(\cY)),
\]
where $\cY$ varies over connected components of cyclotomic inertia $\bigsqcup_{\ell \in \Z_{>0}} I_{\mu_\ell}(\cX)$ of $\cX$ such that $i-\shft_\cX(\cY)$ is an even integer, set
\[
	H^{\odd, i}_{\orb}(\cX) = \bigoplus_{\cY} H_c^{i-2\shft_\cX(\cY)}(\cY)(-\shft_\cX(\cY)),
\]
where $\cY$ varies over connected components of cyclotomic inertia $\bigsqcup_{\ell \in \Z_{>0}} I_{\mu_\ell}(\cX)$ of $\cX$ such that $i-\shft_\cX(\cY)$ is an odd integer, and set
\[
	H^i_{\orb}(\cX) = H^{\even, i}_{\orb}(\cX) \oplus H^{\odd, i}_{\orb}(\cX).
\]
Furthermore, set
\[
	\HD_{\orb}(\cX) = \sum_{i \in \Q} \left(\HD(H_{\orb}^{\even, i}(\cX)) -  \HD(H_{\orb}^{\odd, i}(\cX))\right).
\]
\end{definition}

\begin{remark}
If in addition we assume $\cX$ is proper, then each $\cY$ above is proper, so $H^*_c(\cY) \cong H^*(\cY)$, and since we are taking cohomology with rational coefficients, $H^*(\cY) \cong H^*(Y)$, where $Y$ is the coarse moduli space of $\cY$. Thus in this case, our definition of $H^*_{\orb}(\cX)$ coincides with the algebraic version, e.g., \cite[Definition 75]{Yasuda}, of Chen and Ruan's orbifold cohomology \cite{ChenRuan}. More precisely in this case, $H^{\even, *}_{\orb}(\cX)$ and $H^{\odd, *}_{\orb}(\cX)$ coincide with the notions (to our knowledge first introduced) in \cite[Proof of Corollary 77]{Yasuda}, which provide useful bookkeeping for dealing with sign when working with $\cX$ that have nonvanishing orbifold cohomology in fractional degree. This is where we obtained the idea to similarly define $H^{\even, *}_{\str}(\cX)$ and $H^{\odd, *}_{\str}(\cX)$ separately.
\end{remark}

\section{The stringy class of a stack}

The goal of this section is to prove the following theorem.

\begin{theorem}\label{theoremStringyClassWellDefined}
Let $\cX$ be a smooth finite type equidimensional Artin stack over $k$ with affine diagonal and properly stable good moduli space, let $w \in \Q$, and set $\cA = \cA_\cX \cap \wt_\cX^{-1}(w)$.

\begin{enumerate}[(a)]

\item\label{CanonicalSetOfTwistedArcsIsMeasurable} The subset $\cA \subset |\sJ(\cX)|$ is measurable.

\item\label{CanonicalSetOfTwistedArcsHasConstructibleImage} For each $n \in \Z_{\geq 0}$, we have $\theta_n(\cA)$ is a quasi-compact locally constructible subset of $|\sJ_n(\cX)|$. In particular, $\theta_n^{-1}(\theta_n(\cA)) \subset |\sJ(\cX)|$ is a bounded cylinder.

\item We have the equality
\[
	\nu_\cX(\cA) = \lim_{n \to \infty}(\nu_\cX(\theta_n^{-1}(\theta_n(\cA))).
\]
	
\end{enumerate}
\end{theorem}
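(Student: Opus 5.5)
We will reduce everything to the structure of $\cA_\cX$ coming from the canonical reduction of stabilizers $\pi: \cX' \to \cX$ of Edidin--Rydh. Since $\cX$ has a properly stable good moduli space, $\cX'$ is a smooth tame (in fact Deligne--Mumford after passing to the stable locus) stack, and $\cA_\cX = \pi(|\sJ(\cX')|)$. The first step is boundedness. Twisted arcs of $\cX'$ only have orders $\ell$ dividing the (finitely many) orders of stabilizers of $\cX'$, so $\cA_\cX$ meets only finitely many $|\sJ^\ell(\cX)|$. Next, we use the fact that $\wt_\cX$ takes finitely many values on $\cA_\cX$ (cf.\ \cite[Proposition 4.19]{SatrianoUsatine6}). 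We also use that it is locally constant on $\cA_\cX$ in the cylinder topology: the weight of a twisted arc coming from $\cX'$ is determined by the connected component of cyclotomic inertia of $\cX'$ through which its special fiber passes, together with the jacobian order of $\pi$ along the arc. Combining these, we aim to show that $\cA = \cA_\cX \cap \wt_\cX^{-1}(w)$ is the image under $\pi$ of a subset $\cA' \subset |\sJ(\cX')|$ that is a finite union of cylinders on which the order of the Jacobian ideal of $\pi$ (equivalently, the ``height'' function of $\pi$) is constant.

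For part (\ref{CanonicalSetOfTwistedArcsHasConstructibleImage}), we will use a change of variables / fiber structure result for $\pi$, as in \cite{SatrianoUsatine6}. On a cylinder of $\sJ(\cX')$ where the Jacobian order is a fixed $e$, the map $\theta_n(\cA') \to \theta_n(\cA)$ is well behaved for $n \gg e$. In that range, $\theta_n(\cA)$ is the image of a constructible set under the finite type morphism $\sJ_{n'}(\cX') \to \sJ_n(\cX)$ for suitable $n'$, so Chevalley's theorem for stacks gives local constructibility. Quasi-compactness then follows from boundedness and the finite type of each $\sJ^\ell_n(\cX)$. For small $n$, $\theta_n(\cA) = \theta^{N}_n(\theta_N(\cA))$, which is again the image of a constructible set. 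The ``in particular'' part is immediate from the definitions.

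For (a) and (c), the natural approach is that $\theta_n^{-1}(\theta_n(\cA))$ is a decreasing sequence of bounded cylinders containing $\cA$. It therefore suffices to show that
\[
\Vert \nu_\cX(\theta_n^{-1}(\theta_n(\cA)) \setminus \cA)\Vert \to 0.
\]
Then $\cA$ is measurable by \cite[Definition 4.23]{SatrianoUsatine6}, and its volume is the stated limit. The set $\theta_n^{-1}(\theta_n(\cA)) \setminus \cA$ consists of arcs that agree to order $n$ with an arc of $\cA$ but either do not lift to $\cX'$ or have a different weight. Since weight is eventually determined by a finite jet on $\cA_\cX$, the second kind is empty for $n$ large.

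The main obstacle is the first kind: arcs that are $n$-close to $\cA_\cX$ but do not lie in it. Here I would show that such arcs lie in the set of arcs whose contact order with the locus where $\pi$ fails to be an isomorphism (the unstable/strictly semistable locus) is at least roughly $n/c$, for a constant $c$ depending only on $\cX$. The motivic volume of that set tends to zero, by the standard estimate that arcs with high contact order with a closed substack of positive codimension have volume of norm at most $\exp(-\text{(order)}\cdot\text{codim})$, using \autoref{RecallingNormAndDimension} to bound the dimensions of the jet stacks. Proving the needed Greenberg/Artin-approximation-type statement for the canonical reduction of stabilizers is where I expect the real work to lie.
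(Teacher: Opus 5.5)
Your outline has the right shape: pass to $\pi\colon\cX'\to\cX$, get boundedness from tameness of $\cX'$, use Chevalley for (b), and control $\theta_n^{-1}(\theta_n(\cA))\setminus\cA$ by arcs meeting the non-stable locus to high order. But the step carrying all the content is left unproved. Precisely, you need: for every $m$ there is an $n$ with $\theta_n^{-1}(\theta_n(\cA))\setminus\cA\subset\cE_m:=\theta_m^{-1}(|\bigcup_{\ell\le r}\sJ^\ell_m(\cY)|)$. Here $\cY$ is a closed substack supported on $|\cX|\setminus|\cX^s|$, and $r$ bounds the orders of twisted arcs of $\cX'$. This has to be a containment in small cylinders, since the difference is not a priori measurable. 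You defer it to an unestablished Greenberg-type statement with a linear bound $n/c$.

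The missing idea is that no quantitative approximation is needed, only an exhaustion of $\cA$ by cylinders. Let $\cC'$ be the preimage of $\wt_\cX^{-1}(w)$ in $|\sJ(\cX')|$; it is a bounded cylinder by \cite[Remark 4.21]{SatrianoUsatine6} and tameness. Set $\cY'=\cY\times_\cX\cX'$ and $\cC'_m=\cC'\cap\theta_m^{-1}(|\sJ_m(\cX')|\setminus|\sJ_m(\cY')|)$, and let $\cC_m\subset\cA$ be the image of $\cC'_m$. By \cite[Lemma 12.5]{SatrianoUsatine6}, $\cC_m$ is a bounded cylinder, and by construction $\cA\setminus\cC_m\subset\cE_m$. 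So if $n\geq m$ is at least the level of $\cC_m$, any $\phi\in\theta_n^{-1}(\theta_n(\cA))\setminus\cC_m$ has the same $n$-jet as some $\psi\in\cA\setminus\cC_m$. Hence $\theta_m(\phi)=\theta_m(\psi)\in|\sJ_m(\cY)|$ and $\phi\in\cE_m$. Finally, $\nu_\cX(\cE_m)\to0$ because the $\cE_m$ decrease to $\bigcup_{\ell\le r}|\sJ^\ell(\cY)|$, which has measure zero by \cite[Theorem 9.1]{SatrianoUsatine6}.

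With this containment, your outer-approximation framework does yield (a) and (c). The paper organizes it slightly differently. It first shows $\bigcup_m\cC_m$ is measurable with volume $\lim_m\nu_\cX(\cC_m)$, via the norm comparison for the representable maps $\sJ_n(\cX')\to\sJ_n(\cX)$ (\autoref{imageOfBoundedCylinderAndInequality}). It then compares $\theta_n^{-1}(\theta_n(\cA))$ with $\cC_m$ through exactly this containment.

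Several auxiliary claims should be dropped or corrected.
\begin{itemize}
\item Writing $\cA$ as the image of a finite union of cylinders of constant Jacobian order fails in general. Arcs of $\cA$ lying in $\cY$ are images only of arcs lying in $\cY'$, where that order is infinite, and nearby arcs realize arbitrarily large orders.
\item Part (b) needs none of this: $\theta_n(\cA)=\sJ_n(\pi)(\theta_n(\cC'))$ is constructible by Chevalley and quasi-compact because $\cC'$ is bounded.
\item The claim that the ``second kind'' of bad arcs disappears for large $n$ is unnecessary. The containment above handles arcs of $\cA_\cX$ of other weights and arcs outside $\cA_\cX$ at once. Moreover, your justification through data of the lift to $\cX'$ would not give local constancy in the cylinder topology of $\sJ(\cX)$ near arcs through the non-stable locus, where $n$-close arcs can have very different lifts.
\item The rate $\exp(-m\cdot\operatorname{codim})$ is not valid for singular $\cY$ in general; for schemes the correct rate involves a log canonical threshold. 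Only qualitative decay is used, so this does not matter.
\end{itemize}
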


Before we prove \autoref{theoremStringyClassWellDefined}, we use it to prove the following corollary.

\begin{corollary}\label{weightFunctionIsIntegrable}
Let $\cX$ be a smooth finite type equidimensional Artin stack over $k$ with affine diagonal and properly stable good moduli space. Then $\bL^{-\wt_\cX}$ is integrable on $\cA_\cX$.
\end{corollary}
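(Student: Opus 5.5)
We need to show that $\sum_{w} \bL^{-w}\nu_\cX(\cA_\cX\cap\wt_\cX^{-1}(w))$ converges in $\widehat{\sM}_k[\bL^{1/m}]$ for a suitable $m$, and that each $\wt_\cX^{-1}(w)\cap\cA_\cX$ is measurable. The plan is to reduce convergence to finiteness of the sum.

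First, we use that $\cX$ has a properly stable good moduli space. Then its canonical reduction of stabilizers $\cX'\to\cX$ of Edidin--Rydh is tame. By \cite[Proposition 4.19]{SatrianoUsatine6}, $\wt_\cX$ takes only finitely many values $w_1,\dots,w_r$ on $\cA_\cX$; this is the same fact recorded in the remark following the main definition. All these values are rational, so we may choose $m\in\Z_{>0}$ with $w_j\in(1/m)\Z$ for every $j$. Viewed as a function $\cA_\cX\to(1/m)\Z\cup\{\infty\}$, the function $-\wt_\cX$ then has $(-\wt_\cX)^{-1}(w)=\emptyset$ for all but finitely many $w$.

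Second, for each $w\in(1/m)\Z$, the fiber $(-\wt_\cX)^{-1}(w)$ is either empty or equal to $\cA_\cX\cap\wt_\cX^{-1}(-w)$. The empty set is measurable, being the empty bounded cylinder. In the other case, measurability is exactly \autoref{theoremStringyClassWellDefined}(\ref{CanonicalSetOfTwistedArcsIsMeasurable}).

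Third, the series $\sum_{w\in(1/m)\Z}\bL^{w}\nu_\cX((-\wt_\cX)^{-1}(w))$ has only finitely many nonzero terms, so it converges trivially in $\widehat{\sM}_k[\bL^{1/m}]$. Hence $\bL^{-\wt_\cX}$ is integrable on $\cA_\cX$.

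The only real input, and so the step to check most carefully, is the finiteness of the set of values of $\wt_\cX$ on $\cA_\cX$. This requires that the hypotheses of \cite[Proposition 4.19]{SatrianoUsatine6} be met via tameness of $\cX'$, together with the fact that $\cA_\cX$ is the image of $|\sJ(\cX')|$. If finiteness were instead only available for values in a bounded-below discrete set, the fallback would be to bound $\Vert\nu_\cX(\cA_\cX\cap\wt_\cX^{-1}(w))\Vert$ uniformly using \autoref{RecallingNormAndDimension}, and then use that $\Vert\bL^{-w}\Vert\to 0$ as $w\to\infty$.
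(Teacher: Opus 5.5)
Your proposal is correct and follows the paper's proof: measurability of each $\cA_\cX \cap \wt_\cX^{-1}(w)$ comes from \autoref{theoremStringyClassWellDefined}(\ref{CanonicalSetOfTwistedArcsIsMeasurable}), and convergence holds because $\wt_\cX$ takes only finitely many values on $\cA_\cX$, by \cite[Proposition 4.19]{SatrianoUsatine6} and tameness of the Edidin--Rydh canonical reduction of stabilizers. Your fallback argument is not needed.
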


\begin{proof}
This is an immediate consequence of \autoref{theoremStringyClassWellDefined}(\ref{CanonicalSetOfTwistedArcsIsMeasurable}) and the fact that $\wt_\cX$ takes only finitely many values on $\cA_\cX$. The latter is a direct consequence of \cite[Proposition 4.19]{SatrianoUsatine6} and the fact that if $\cX' \to \cX$ is the canonical reduction of stabilizers of Edidin--Rydh, then $\cX'$ is tame \cite[Theorem 2.11(4c)]{EdidinRydh}.
\end{proof}

Now we return to proving \autoref{theoremStringyClassWellDefined}. We will begin with a few lemmas.

\begin{lemma}\label{surjectiveRepresentableMorphismBoundDimension}
Let $\cY \to \cZ$ be a representable morphism of finite type Artin stacks over $k$. If the induced map $|\cY| \to |\cZ|$ is surjective, then $\dim\cZ \leq \dim\cY$.
\end{lemma}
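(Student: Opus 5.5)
We reduce to schemes via a smooth atlas and then use the scheme-theoretic fact that a dominant map of varieties cannot raise dimension. Choose a smooth surjection $U \to \cZ$ from a finite type scheme $U$ and form the fiber product $V = \cY \times_\cZ U$. Since $\cY \to \cZ$ is representable, $V$ is an algebraic space of finite type over $k$. The projection $V \to U$ is surjective on topological spaces, because surjectivity of $|\cY| \to |\cZ|$ is stable under base change. The projection $V \to \cY$ is smooth and surjective, being a base change of $U \to \cZ$.

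Next we compare relative dimensions. Dimension of an Artin stack can be computed from a smooth atlas: if $W \to \cW$ is smooth and surjective, then $\dim\cW = \sup_w(\dim_w W - \dim_w(W/\cW))$, taking the relative dimension locally at each point. To make relative dimensions match, it is cleanest to work point by point. Take a point $z \in |\cZ|$ realizing $\dim\cZ$, or work one irreducible component of $\cZ$ at a time. Then pick a point $u \in U$ over $z$ at which $\dim_u U - \dim_u(U/\cZ) = \dim_z \cZ$; we may shrink $U$ to an open set on which $U \to \cZ$ has constant relative dimension $r$.

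By surjectivity, choose $v \in V$ mapping to $u$, or rather an irreducible component of $V$ dominating a component of $U$ of top dimension. Such a component exists because $V \to U$ is surjective and $V$ has finitely many irreducible components. The map $V \to \cY$ is smooth of the same relative dimension $r$, since it is a base change. Hence
\[
\dim\cY \geq \dim V - r \geq \dim U - r = \dim\cZ.
\]
The middle inequality is the algebraic-space statement that a surjective (hence component-wise dominant) morphism of finite type algebraic spaces satisfies $\dim V \geq \dim U$. This reduces to schemes by taking an étale atlas of $V$, and for schemes it follows from fiber dimension theory applied to a dominant map of irreducible varieties.

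The main obstacle is the bookkeeping for stack dimension: the relative dimension of the atlas must be the same on both sides, and the dimension must be attained at the chosen components. Shrinking $U$ to constant relative dimension and using that base change preserves relative dimension handles this.

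Alternatively, one could use \autoref{RecallingNormAndDimension}, but that requires affine stabilizers and a motivic comparison that is not available here. The atlas argument is therefore the one we would use.
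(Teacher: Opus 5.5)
Your proposal is correct and follows essentially the same route as the paper. Both arguments pass to a smooth atlas of constant relative dimension, base change along the representable map to get a finite type algebraic space surjecting onto the atlas, and then use an \'etale cover by a scheme to reduce to the fact that a surjective morphism of finite type schemes does not decrease dimension. The only difference is bookkeeping: the paper first restricts to the smooth locus of a top-dimensional integral closed substack of $\cZ$, whereas you shrink the atlas near a point realizing $\dim\cZ$, and both work.
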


\begin{proof}
Let $\cW$ be an integral closed substack of $\cZ$ with $\dim\cW = \dim\cZ$, let $\cU$ be the smooth locus of $\cW$, and set $\cV = \cU \times_\cZ \cY$. It is sufficient to show that $\dim\cV \geq \dim \cU$. Let $U \to \cU$ be a smooth cover of constant relative dimension by a finite type scheme, and set $V = \cV \times_\cU U$. Then it is sufficient to show that $\dim V \geq \dim U$. Since $V$ is a finite type algebraic space over $k$, there exists an \'etale cover $V' \to V$ by a finite type scheme over $k$, and $\dim V' = \dim V$. Thus it is sufficient to show that $\dim V' \geq \dim U$, but this follows from the fact that $V' \to U$ is a surjective map of finite type schemes over $k$.
\end{proof}

\begin{lemma}\label{containedConstructiblesBound}
Let $\cY$ be a finite type Artin stack over $k$ with affine geometric stabilizers, and let $\cC \subset \cD$ be constructible subsets of $|\cY|$. Then
\[
	\Vert \e(\cC) \Vert \leq \Vert \e(\cD) \Vert.
\]
\end{lemma}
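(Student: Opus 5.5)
We will reduce to the case where $\cC$ and $\cD$ are realized as locally closed pieces, and then use \autoref{RecallingNormAndDimension} together with the behaviour of dimension under inclusion. Recall that for a constructible subset $\cC \subset |\cY|$, the class $\e(\cC)$ is defined by choosing a decomposition of $\cC$ as a finite disjoint union of locally closed subsets, giving each the reduced induced substack structure $\cC_1, \dots, \cC_r$, and setting $\e(\cC) = \sum_j \e(\cC_j)$. This is independent of the choice by the scissor relations in $K_0(\Stack_k)$.

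\textbf{Step 1: compute $\Vert\e(\cC)\Vert$.} I claim $\Vert \e(\cC) \Vert = \exp(\dim \cC)$, where $\dim\cC = \max_j \dim\cC_j$. Each $\cC_j$ is a finite type Artin stack with affine geometric stabilizers, being a locally closed substack of $\cY$. Hence \autoref{RecallingNormAndDimension} gives $\Vert\e(\cC_j)\Vert = \exp(\dim\cC_j)$. The norm is non-archimedean, so
\[
	\Vert \e(\cC)\Vert \leq \max_j \exp(\dim\cC_j).
\]
For the reverse inequality, avoid having to rule out cancellation among the leading terms. The $\cC_j$ are disjoint locally closed substacks, so their disjoint union $\bigsqcup_j \cC_j$ is itself a finite type Artin stack with affine geometric stabilizers, of dimension $\max_j \dim\cC_j$. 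Its class in $K_0(\Stack_k)$ is $\sum_j [\cC_j]$, so its image is $\e(\cC)$. Applying \autoref{RecallingNormAndDimension} to this single stack gives the equality $\Vert\e(\cC)\Vert = \exp(\max_j\dim\cC_j)$ directly. We use the convention $\dim\emptyset = -\infty$ for the empty case.

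\textbf{Step 2: monotonicity of dimension.} It remains to show $\max_j \dim \cC_j \leq \max_k \dim\cD_k$, where $\cD = \bigsqcup_k \cD_k$ is a locally closed decomposition. Refine the decompositions: the sets $\cC_j \cap \cD_k$ are constructible and partition $\cC$. After a further finite locally closed stratification of each, we get a decomposition of $\cC$ each of whose pieces $\cE$ is a locally closed subset of some $\cD_k$ and of some $\cC_j$. By Step 1, $\Vert\e(\cC)\Vert$ does not depend on the decomposition chosen, so we may compute it using these refined pieces. Each piece $\cE \subset \cD_k$ is a locally closed substack, so $\dim\cE \leq \dim\cD_k$, for instance by pulling back to a smooth atlas of $\cD_k$, where it becomes the usual statement for schemes. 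Alternatively, \autoref{surjectiveRepresentableMorphismBoundDimension} applies to the representable immersion. Taking maxima gives the inequality.

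\textbf{Main obstacle.} The only delicate point is Step 1's lower bound: cancellation in $\widehat{\sM}_k$ could a priori make a sum of classes smaller in norm. Packaging the pieces as one disjoint-union stack and invoking \autoref{RecallingNormAndDimension} once is what I would rely on to sidestep this. Everything else is routine bookkeeping with stratifications.
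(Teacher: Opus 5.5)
Your proof is correct. Step 1 is exactly the mechanism the paper relies on: you package the locally closed pieces as a single disjoint-union stack and apply \autoref{RecallingNormAndDimension} once, so cancellation never arises.

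Where you differ is Step 2. You take an arbitrary decomposition of $\cD$, intersect it with the pieces of $\cC$, and then need the extra fact that a locally closed substack $\cE \subset \cD_k$ satisfies $\dim\cE \leq \dim\cD_k$.

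The paper instead chooses the decomposition of $\cD$ to be compatible with $\cC$ from the outset. It partitions both $\cC$ and the constructible set $\cD \setminus \cC$ into locally closed pieces. Together these form a partition of $\cD$ that contains the pieces of $\cC$, so $\Vert \e(\cD) \Vert$ is $\exp$ of a maximum over a larger collection of dimensions than $\Vert \e(\cC)\Vert$. The inequality is then immediate, with no comparison of dimensions of substacks required. You had already noted that the norm is independent of the decomposition, so you could have made this choice and dropped the monotonicity argument entirely.

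Two small remarks on your Step 2. First, the sets $\cC_j \cap \cD_k$ are already locally closed, so no further stratification is needed. Second, your alternative justification via \autoref{surjectiveRepresentableMorphismBoundDimension} does not work. That lemma requires the map on topological spaces to be surjective, which fails for $\cE \hookrightarrow \cD_k$ in general. It also yields the inequality in the opposite direction: dimension of the target is at most dimension of the source.

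The atlas argument you give first is the right one. If $U \to \cD_k$ is a smooth atlas, then $U \times_{\cD_k} \cE \to U$ is a locally closed immersion. Moreover, the fibers of $U \times_{\cD_k}\cE \to \cE$ over points of $\cE$ agree with those of $U \to \cD_k$. Hence $\dim\cE \leq \dim\cD_k$.
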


\begin{proof}
By considering partitions of $\cC$ and $\cD \setminus \cC$ into locally closed substacks of $\cY$, this is a straightforward consequence of \autoref{RecallingNormAndDimension}.
\end{proof}

\begin{lemma}\label{representableConstructibleBound}
Let $\cY$ and $\cZ$ be finite type Artin stacks over $k$ with affine geometric stabilizers, let $\cC$ and $\cD$ be constructible subsets of $|\cY|$ and $|\cZ|$, respectively, and let $\pi: \cY \to \cZ$ be a representable morphism such that $\cD \subset \pi(\cC)$. Then
\[
	\Vert \e(\cD) \Vert \leq \Vert \e(\cC) \Vert.
\]
\end{lemma}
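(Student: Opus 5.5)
By \autoref{RecallingNormAndDimension} (together with the additivity used in \autoref{containedConstructiblesBound}), the norm of the class of a constructible set equals the exponential of its dimension. So the statement reduces to a dimension inequality: it suffices to prove $\dim \cD \leq \dim \cC$. Here the dimension of a constructible subset means the maximum dimension of the locally closed substacks in any partition of it, which is well defined by \autoref{RecallingNormAndDimension}. If $\cD$ is empty there is nothing to prove, so we assume it is nonempty.

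The plan is to reduce to \autoref{surjectiveRepresentableMorphismBoundDimension}.

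\begin{enumerate}
\item Partition $\cC$ into finitely many reduced locally closed substacks $\cC_1, \dots, \cC_r$ of $\cY$. By \autoref{RecallingNormAndDimension} and the ultrametric behaviour of the norm, $\Vert \e(\cC)\Vert = \max_j \exp(\dim \cC_j)$.

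\item For each $j$, the image $\pi(|\cC_j|)$ is constructible in $|\cZ|$ by Chevalley's theorem for finite type morphisms of Artin stacks. Hence $\cD_j := \cD \cap \pi(|\cC_j|)$ is constructible, and the sets $\cD_j$ cover $\cD$.

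\item Partition each $\cD_j$ into reduced locally closed substacks $\cW_{j,1}, \dots, \cW_{j,s_j}$ of $\cZ$. For each piece set $\cV_{j,k} = \cC_j \times_\cZ \cW_{j,k}$. This is a finite type Artin stack, and the projection $\cV_{j,k} \to \cW_{j,k}$ is representable, being a base change of the representable morphism $\pi|_{\cC_j}$. It is surjective on topological spaces because $\cW_{j,k} \subset \pi(|\cC_j|)$, using that points of a fibre product surject onto pairs of points with the same image.

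\item Apply \autoref{surjectiveRepresentableMorphismBoundDimension} to get $\dim \cW_{j,k} \leq \dim \cV_{j,k}$.

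\item Compare $\cV_{j,k}$ with $\cC_j$. The projection $\cV_{j,k} \to \cC_j$ is a base change of the locally closed immersion $\cW_{j,k} \to \cZ$, so it is a locally closed immersion. Therefore $\dim \cV_{j,k} \leq \dim \cC_j$.

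\item Combine the estimates. Since the $\cD_j$ cover $\cD$ and partitions can be refined, $\dim\cD = \max_{j,k} \dim \cW_{j,k} \leq \max_j \dim\cC_j = \dim \cC$. By \autoref{containedConstructiblesBound} (or directly from \autoref{RecallingNormAndDimension}), this gives $\Vert \e(\cD)\Vert \leq \Vert \e(\cC)\Vert$.
\end{enumerate}

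The main technical point is making sure the dimension of a constructible set is well defined, i.e. independent of the chosen partition, and equal to the logarithm of the norm. That follows from \autoref{RecallingNormAndDimension}: each locally closed piece has norm $\exp(\dim)$, and the norm of a finite sum of effective classes equals the maximum of the norms because there is no cancellation in top dimension. I would cite this exactly as in the proof of \autoref{containedConstructiblesBound}.

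The remaining checks are routine:
\begin{itemize}
\item the stacks $\cV_{j,k}$ are finite type with affine geometric stabilizers, which holds since they embed in $\cC_j$;
\item Chevalley's theorem applies to stacks, by using smooth atlases.
\end{itemize}
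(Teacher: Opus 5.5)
Your proof is correct and follows essentially the same route as the paper: partition into locally closed pieces, build representable morphisms of finite type stacks that are surjective on points, apply \autoref{surjectiveRepresentableMorphismBoundDimension}, and convert dimensions into norms via \autoref{RecallingNormAndDimension}. The differences are only organizational. The paper first uses Chevalley and \autoref{containedConstructiblesBound} to reduce to $\cD = \pi(\cC)$, then partitions each $\cC \cap \pi^{-1}(|\cZ_i|)$ into reduced pieces mapping to the $\cZ_i$. You instead cover $\cD$ by the sets $\cD \cap \pi(|\cC_j|)$ and use the fibre products $\cC_j \times_\cZ \cW_{j,k}$, which gives the factorization through $\cW_{j,k}$ and its representability automatically.
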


\begin{proof}
By Chevalley's Theorem for Artin stacks \cite[Theorem 5.1]{HallRydh}, $\pi(\cC)$ is a constructible subset of $|\cZ|$. Thus by \autoref{containedConstructiblesBound} it is sufficient to prove the special case where $\cD = \pi(\cC)$.

 Let $\{\cZ_i\}_i$ be a partition of $\cD$ by locally closed substacks of $\cZ$, and for each $i$, let $\{\cY_{i,j}\}_j$ be a partition of $\cC \cap \pi^{-1}(|\cZ_i|)$ by reduced locally closed substacks of $\cY$. Set $\cY' = \bigsqcup_{i,j} \cY_{i,j}$ and $\cZ' = \bigsqcup_i \cZ_i$. Then the induced map $\cY' \to \cZ'$ induces a surjection on associated topological spaces, so by \autoref{surjectiveRepresentableMorphismBoundDimension},
\[
	\dim\cY' \geq \dim\cZ'.
\]
Therefore
\[
	\Vert \e(\cD) \Vert = \Vert \e(\cZ') \Vert = \exp(\dim\cZ') \leq \exp(\dim\cY') = \Vert \e(\cY') \Vert = \Vert \e(\cC) \Vert,
\]
where the second and third equalities are given by \autoref{RecallingNormAndDimension}.
\end{proof}

\begin{lemma}\label{twistedJetFunctorSendsRepresentableToRepresentable}
Let $\cX'$ and $\cX$ be finite type Artin stacks over $k$ with affine diagonal, and let $\cX' \to \cX$ be a representable map. Then for each $n \in \Z_{\geq 0}$, the induced map $\sJ_n(\cX') \to \sJ_n(\cX)$ is representable.
\end{lemma}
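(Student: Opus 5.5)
Since $\sJ_n(\cX) = \bigsqcup_\ell \sJ_n^\ell(\cX)$ and $\sJ_n(\cX') = \bigsqcup_\ell \sJ_n^\ell(\cX')$, and the induced map respects the decomposition by $\ell$, it suffices to fix $\ell$ and show that $\sJ_n^\ell(\cX') \to \sJ_n^\ell(\cX)$ is representable. Write $\cD = \cD^\ell_n$. By definition, $\sJ_n^\ell(\cX') = \uHom^{\rep}_k(\cD, \cX')$ is an open substack of $\uHom_k(\cD, \cX')$, and likewise for $\cX$. The plan is to use the criterion that a morphism of algebraic stacks is representable (by algebraic spaces) if and only if it is injective on automorphism groups of geometric points, i.e.\ faithful on groupoids of $T$-points.

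So let $T$ be a $k$-scheme, and let $f: \cD \times T \to \cX'$ be a representable morphism. An automorphism of $f$ in $\sJ_n^\ell(\cX')(T)$ is a $2$-isomorphism $\alpha: f \Rightarrow f$. We must show that if the composite $\pi \circ \alpha: \pi f \Rightarrow \pi f$ is the identity, where $\pi: \cX' \to \cX$, then $\alpha$ is the identity. Since $\cD \times T$ has a fppf (indeed étale after inverting nothing, since characteristic $0$) cover $V = \Spec(k[t^{1/\ell}]/(t^{n+1})) \times T \to \cD \times T$, a $2$-isomorphism is determined by its pullback to $V$, i.e.\ by a section of the inertia of $\cX'$ along $V \to \cX'$. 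In other words, $\alpha$ corresponds to a $V$-point of $I_{\cX'} \times_{\cX'} V$, and $\pi\circ\alpha$ is its image in $I_\cX \times_\cX V$.

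The key input is that $\pi$ representable means that $I_{\cX'} \to I_\cX \times_\cX \cX'$ is a monomorphism. Hence if the image of $\alpha|_V$ in $I_\cX\times_\cX V$ is the identity section, then $\alpha|_V$ is itself the identity section of $I_{\cX'}\times_{\cX'}V$, so $\alpha$ is the identity. This gives faithfulness of $\sJ_n^\ell(\cX')(T) \to \sJ_n^\ell(\cX)(T)$ for every $T$, hence representability of the map of algebraic stacks (both are algebraic by \cite[Remark 4.4]{SatrianoUsatine6}).

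The main point requiring care is the reduction of $2$-morphisms on the quotient stack $\cD \times T$ to data on the cover $V$: one must note that descent for morphisms in $\cX'$ along the cover $V \to \cD\times T$ means a $2$-isomorphism is uniquely determined by its restriction to $V$ (compatibility on $V\times_{\cD\times T}V$ being automatic for the identity). Note that representability of $f$ itself is not even needed for this argument; the property only depends on $\pi$ being representable.
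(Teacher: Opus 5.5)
Your proposal is correct and takes essentially the same approach as the paper: both check that $\sJ_n^\ell(\cX')(T) \to \sJ_n^\ell(\cX)(T)$ is faithful on $2$-isomorphisms, deducing $\alpha = \id$ from $\pi\circ\alpha = \id$ via representability of $\cX' \to \cX$. The paper simply asserts this last step ``by representability of $g$,'' while you spell out the descent along the \'etale $\mu_\ell$-torsor $V \to \cD \times T$ that justifies using representability even though the source $\cD \times T$ is a stack rather than a scheme.
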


\begin{proof}
Let $g$ be the map $\cX' \to \cX$. Let $\ell \in \Z_{>0}$, let $A$ be a $k$-algebra, let $f'_i\colon\cD^\ell_n \otimes_k A \to\cX'$ for $i=1,2$ be representable maps, and let $\alpha\colon f'_1\Rightarrow f'_2$ be a $2$-isomorphism. Then we have an induced $2$-isomorphism $g^*(\alpha)\colon gf'_1\Rightarrow gf'_2$. If $g^*(\alpha)=\id$ then by representability of $g$, we see $\alpha=\id$. Thus, $\cJ_n(\cX')\to\cJ_n(\cX)$ is representable.
\end{proof}

For the remainder of this section, let $\cX$ be a smooth finite type equidimensional Artin stack over $k$ with affine diagonal and properly stable good moduli space, let $\pi: \cX' \to \cX$ be the canonical reduction of stabilizers of Edidin--Rydh, let $\cX^s$ be the stable locus of $\cX$, and let $\cU' = \pi^{-1}(\cX^s)$.

\begin{proposition}\label{imageOfBoundedCylinderAndInequality}
If $\cC' \subset |\sJ(\cX')|$ is a bounded cylinder disjoint from $|\sJ(\cX' \setminus \cU')|$ and $\cC$ is the image of $\cC'$ in $|\sJ(\cX)|$, then $\cC$ is a bounded cylinder and 
\[
	\Vert \nu_{\cX'}(\cC') \Vert \geq \Vert \nu_{\cX}(\cC) \Vert.
\]
\end{proposition}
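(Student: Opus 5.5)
We will reduce everything to finite jet level and then compare dimensions using \autoref{representableConstructibleBound}. Since $\cC'$ is a bounded cylinder, we fix $n$ and a quasi-compact locally constructible $\cC'_n \subset |\sJ_n(\cX')|$ with $\cC' = \theta_n^{-1}(\cC'_n)$; boundedness means only finitely many orders $\ell$ occur, so $\cC'_n$ is in fact constructible in a finite type open substack of $\sJ_n(\cX')$. The map $\pi: \cX' \to \cX$ is representable, so by \autoref{twistedJetFunctorSendsRepresentableToRepresentable} each induced map $\sJ_m(\cX') \to \sJ_m(\cX)$ is representable. It also preserves the order $\ell$, so $\cC$ is bounded.

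\textbf{Step 1: $\cC$ is a cylinder.} Over $\cU' = \pi^{-1}(\cX^s)$ the map $\pi$ should be as nice as possible. Edidin--Rydh's reduction of stabilizers is an isomorphism over the stable locus once the stabilizers there are finite; more precisely, $\cU' \to \cX^s$ is a composite of saturated blowups that is an isomorphism on the open locus with finite stabilizers, and $\cX^s$ has finite stabilizers. We will therefore show that $\cU' \to \cX^s$ is an isomorphism (or at least étale and bijective on points with isomorphic stabilizers), so that $\sJ(\cU') \cong \sJ(\cX^s)$ compatibly with truncations.

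Since $\cC'$ avoids $|\sJ(\cX'\setminus\cU')|$, we can shrink $\cC'_n$: we replace $n$ by a larger $m$ and $\cC'_n$ by a constructible set in $|\sJ_m(\cU')|$. This works because an arc not lying in the closed complement has finite contact order, and boundedness plus quasi-compactness give a uniform bound, via a Noetherian/constructibility argument on jet spaces. Then $\cC = \theta_m^{-1}(\pi_m(\cC'_m))$ with $\pi_m(\cC'_m)$ constructible by Chevalley \cite[Theorem 5.1]{HallRydh}, so $\cC$ is a bounded cylinder.

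\textbf{Step 2: the inequality.} By definition \cite[Definition 4.13]{SatrianoUsatine6}, for $m$ large the volume is $\nu_{\cX'}(\cC') = \e(\cC'_m)\bL^{-(m+1)\dim\cX'}$, up to the fixed normalization, and similarly for $\cX$. Here $\dim\cX' = \dim\cX$, since $\pi$ is birational on the dense stable locus. We then apply \autoref{representableConstructibleBound} to the representable map $\sJ_m(\cX') \to \sJ_m(\cX)$ with $\cD = \pi_m(\cC'_m)$. This gives $\Vert\e(\pi_m(\cC'_m))\Vert \le \Vert \e(\cC'_m)\Vert$, and multiplying by the common power of $\bL$ yields the claim.

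\textbf{Main obstacle.} The hardest point is Step 1: making the uniform contact-order bound precise, so that $\cC'$ is a cylinder coming from $\sJ_m(\cU')$, and confirming that $\pi$ is an isomorphism over $\cX^s$. For the second point we would first try citing \cite[Theorem 2.11]{EdidinRydh}. If that fails, we would only use that $\pi|_{\cU'}$ is representable, étale and stabilizer-preserving, which suffices for twisted jets to be identified via the infinitesimal lifting property.
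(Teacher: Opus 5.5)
Your Step 2 is essentially the paper's argument. The map $\sJ_m(\cX')\to\sJ_m(\cX)$ is representable by \autoref{twistedJetFunctorSendsRepresentableToRepresentable}, and $\theta_m(\cC)$ is the image of $\theta_m(\cC')$. So \autoref{representableConstructibleBound} gives $\Vert\e(\theta_m(\cC'))\Vert\geq\Vert\e(\theta_m(\cC))\Vert$ at every level $m$, which yields the inequality once $\cC$ is known to be a cylinder.

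The gap is in Step 1, which is where the cylinder claim lives. Disjointness of $\cC'$ from $|\sJ(\cX'\setminus\cU')|$, plus a compactness argument, can at best give a uniform bound on the contact order of arcs of $\cC'$ with $\cX'\setminus\cU'$. That is, it gives some $e$ with $\cC'\cap\theta_e^{-1}(|\sJ_e(\cX'\setminus\cU')|)=\emptyset$. It does not put $\cC'$ inside $|\sJ(\cU')|=\theta_0^{-1}(|\sJ_0(\cU')|)$. An arc of contact order between $1$ and $e$ is not contained in $\cX'\setminus\cU'$, since its generic point lies in $\cU'$, yet its closed point lies in $\cX'\setminus\cU'$. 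You have conflated ``not contained in the closed complement'' with ``contained in the open set.''

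Such arcs are exactly what the proposition is about. For example, the cylinders $\cC'_n$ to which it is applied in the proof of \autoref{theoremStringyClassWellDefined} are cut out only by requiring the $n$-jet to avoid $\sJ_n(\cY')$. So they are not contained in $|\sJ(\cU')|$ in general. Hence you cannot replace $\cC'_n$ by a constructible subset of $|\sJ_m(\cU')|$. The isomorphism $\cU'\cong\cX^s$, while true, says nothing about the relevant arcs, whose closed points land in the exceptional locus where $\pi$ is a composite of saturated blowups.

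Consequently your identity $\cC=\theta_m^{-1}(\pi_m(\cC'_m))$ is unjustified. You would have to show, for suitable $m$, that every arc of $\cX$ whose $m$-jet lies in the image of $\cC'_m$ actually lifts to an arc of $\cC'$. This requires analysing $\sJ(\pi)$ on arcs with bounded contact order with the exceptional locus, in the spirit of a change of variables formula. Concretely, one must lift arcs of $\cX$ to $\cX'$ and control, in terms of $n$ and the contact-order bound, at which jet level membership of the lift in $\cC'$ is detected.

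The paper does not reprove this; it obtains that $\cC$ is a bounded cylinder directly from \cite[Lemma 12.5]{SatrianoUsatine6}. Without that input, or a proof of it, Step 1 fails. Then $\nu_\cX(\cC)$ is not known to be computed at a finite level, so Step 2 has nothing to apply to.
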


\begin{proof}
We have that $\cC$ is a bounded cylinder by \cite[Lemma 12.5]{SatrianoUsatine6}. Since $\cX' \to \cX$ is representable, $\sJ_n(\cX') \to \sJ_n(\cX)$ is representable for all $n \in \Z_{\geq 0}$ by \autoref{twistedJetFunctorSendsRepresentableToRepresentable}. Thus \autoref{representableConstructibleBound} implies that $\Vert \e(\theta_n(\cC')) \Vert \geq \Vert \e(\theta_n(\cC)) \Vert$ for all $n \in \Z_{\geq 0}$, so $\Vert \nu_{\cX'}(\cC') \Vert \geq \Vert \nu_{\cX}(\cC) \Vert$.
\end{proof}

We are now prepared for the proof of \autoref{theoremStringyClassWellDefined}.

\begin{proof}[Proof of \autoref{theoremStringyClassWellDefined}]
Let $\cY$ be a closed substack of $\cX$ supported on $|\cX| \setminus |\cX^s|$, and set $\cY' = \cY \times_\cX \cX'$. Let $\cC'$ be the preimage of $\wt_\cX^{-1}(w)$ in $|\sJ(\cX')|$. By \cite[Remark 4.21]{SatrianoUsatine6}, $\cC'$ is a cylinder. Furthermore $\cC'$ is a bounded cylinder since $\cX'$ is finite type and tame. For each $n \in \Z_{\geq 0}$, set $\cC'_n = \cC' \cap \theta_n^{-1}(|\sJ_n(\cX')| \setminus |\sJ_n(\cY')|)$. Then
\begin{itemize}

\item each $\cC'_n$ is a bounded cylinder disjoint from $|\sJ(\cX' \setminus \cU')|$,

\item $\cC'_n \subset \cC'_{n+1}$ for all $n$, and

\item $\cC' \setminus |\sJ(\cX' \setminus \cU')|$ is the union of all $\cC'_n$.

\end{itemize}
Since $|\sJ(\cX' \setminus \cU')|$ is measurable by \cite[Theorem 9.1]{SatrianoUsatine6}, the union of all the $\cC'_n$ is measurable. Therefore
\[
	\lim_{n \to \infty} \nu_{\cX'}(\cC'_{n+1} \setminus \cC'_n) = 0.
\]
For each $n$, let $\cC_n$ be the image of $\cC'_n$ in $|\sJ(\cX)|$. Then by \autoref{imageOfBoundedCylinderAndInequality}, each $\cC_n$ is a bounded cylinder. Also each $\cC_{n+1} \setminus \cC_n$ is contained in the image of $\cC'_{n+1} \setminus \cC'_n$, so by \autoref{containedConstructiblesBound} and \autoref{imageOfBoundedCylinderAndInequality},
\[
	\Vert \nu_{\cX}(\cC_{n+1} \setminus \cC_n) \Vert \leq \Vert \nu_{\cX'}( \cC'_{n+1} \setminus \cC'_n) \Vert.
\]
Therefore
\[
	\lim_{n \to \infty} \nu_{\cX}(\cC_{n+1} \setminus \cC_n) = 0,
\]
so $\bigcup_{n \to \infty} \cC_n$ is measurable and has measure $\lim_{n \to \infty} \nu_\cX(\cC_n)$. In what follows, note that for each $\ell \in \Z_{\geq 0}$, we have $|\sJ^\ell(\cY)|$ is measurable and $\nu_\cX(|\sJ^\ell(\cY)|) = 0$ by \cite[Theorem 9.1]{SatrianoUsatine6}. Since $\cX'$ is finite type and tame, there exists some $r \in \Z_{\geq 0}$ such that $\sJ^\ell(\cX') = \emptyset$ for all $\ell > r$.

We will now prove parts (a), (b), and (c).
\begin{enumerate}[(a)]

\item We have $\cA \setminus |\bigcup_{\ell = 1}^r \sJ^\ell(\cY)| = \bigcup_{n \to \infty} \cC_n$ is measurable, so $\cA$ is measurable.

\item This part follows from Chevalley's Theorem for Artin stacks, the fact that $\theta_n(\cA)$ is the image of $\theta_n(\cC')$, and the fact that $\cC'$ is bounded.

\item Note that we have already shown that
\[
	\nu_\cX(\cA) = \nu_\cX \left(\cA \setminus |\bigcup_{\ell = 1}^r \sJ^\ell(\cY)|\right) = \lim_{n \to \infty} \nu_\cX(\cC_n).
\]
For each $n$, set $\cE_n = \theta_n^{-1}( | \bigcup_{\ell = 1}^r \sJ^\ell_n(\cY) | )$. Then each $\cE_n \supset \cE_{n+1}$ and $\bigcap_{n = 0}^\infty \cE_n = \bigcup_{\ell = 1}^r |\sJ^\ell(\cY)|$ has measure 0, so $\lim_{n \to 0} \nu_\cX(\cE_n) = 0$. Also for all n,
\[
	\cA \setminus \cC_n \subset \cE_n.
\]
Thus for all $m, n$,
\[
	\theta_n(\cA) \setminus \theta_n(\cC_m) \subset \theta_n(\cA \setminus \cC_m) \subset \theta_n(\cE_m),
\]
so for all $m \leq n$,
\begin{align*}
	\Vert \e(\theta_n(\cA))\bL^{-(n+1)\dim\cX} -& \e(\theta_n(\cC_m))\bL^{-(n+1)\dim\cX} \Vert \\
	&\leq \Vert \e(\theta_n(\cE_m))\bL^{-(n+1)\dim\cX} \Vert = \Vert \nu_\cX(\cE_m) \Vert.
\end{align*}
Let $\varepsilon > 0$, and let $m$ be such that $\Vert \nu_\cX(\cE_m) \Vert < \varepsilon$ and $\Vert \nu_\cX(\cA) - \nu_\cX(\cC_m) \Vert < \varepsilon$. Since $\cC_m$ is a bounded cylinder, there exists some $m'$ such that for all $n \geq m'$ we have $\nu_\cX(\cC_m) = \e(\theta_n(\cC_m))\bL^{-(n+1)\dim\cX}$. Then for all $n \geq \max(m, m')$,
\begin{align*}
	\Vert \nu_\cX(\theta_n^{-1}(&\theta_n(\cA))) - \nu_\cX(\cC_m) \Vert \\
	&= \Vert \e(\theta_n(\cA))\bL^{-(n+1)\dim\cX} - \e(\theta_n(\cC_m))\bL^{-(n+1)\dim\cX} \Vert \\
	&\leq \Vert \nu_\cX(\cE_m) \Vert \\
	&< \varepsilon,
\end{align*}
so
\begin{align*}
	\Vert \nu_\cX(\cA) &- \nu_\cX(\theta_n^{-1}(\theta_n(\cA))) \Vert \\
	&\leq \max\left(\Vert \nu_\cX(\cA) - \nu_\cX(\cC_m) \Vert, \Vert \nu_\cX(\theta_n^{-1}(\theta_n(\cA))) - \nu_\cX(\cC_m) \Vert \right) \\
	&< \varepsilon.
\end{align*}
Therefore
\[
	\nu_\cX(\cA) = \lim_{n \to \infty}(\nu_\cX(\theta_n^{-1}(\theta_n(\cA))).
\]

\end{enumerate}
\end{proof}

\section{A criterion for integrality of the weight function}

Throughout this section, let $\cX$ be a smooth finite type equidimensional Artin stack over $\C$ with affine diagonal and properly stable good moduli space. Then by \autoref{weightFunctionIsIntegrable}, $\bL^{-\wt_\cX}$ is integrable on $\cA_\cX$, so we may consider
\[
	\HD_{\str}(\cX) = \HD\left(\bL^{\dim\cX} \int_{\cA_\cX} \bL^{-\wt_\cX} \diff\nu_\cX \right) \in \Z \lPuiseux u^{-1}, v^{-1} \rPuiseux.
\]
If $\wt_\cX(\cA_\cX) \subset \Z$, then 
\[
	\int_{\cA_\cX} \bL^{-\wt_\cX} \diff\nu_\cX \in \widehat{\sM_k},
\]
so in that case,
\[
	\HD_{\str}(\cX) \in \Z \llparenthesis u^{-1}, v^{-1} \rrparenthesis.
\]
In this section we will prove the following converse.

\begin{proposition}\label{propositionIntegralWeightFunction}
If $\HD_{\str}(\cX) \in \Z \llparenthesis u^{-1}, v^{-1} \rrparenthesis$, then $\wt_\cX(\cA_\cX) \subset \Z$.
\end{proposition}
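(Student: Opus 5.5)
The plan is to decompose $\HD_{\str}(\cX)$ according to the finitely many values of $\wt_\cX$ on $\cA_\cX$, and then show that the pieces whose weights are not integers can neither vanish nor cancel against each other. Set $d = \dim\cX$, and for each $w \in \Q$ set $\cA_w = \cA_\cX \cap \wt_\cX^{-1}(w)$. By the proof of \autoref{weightFunctionIsIntegrable}, only finitely many $w$ have $\cA_w \neq \emptyset$. By definition,
\[
	\HD_{\str}(\cX) = \sum_{w} (uv)^{d-w}\, \HD(\nu_\cX(\cA_w)),
\]
and each $\HD(\nu_\cX(\cA_w))$ lies in $\Z\llparenthesis u^{-1}, v^{-1}\rrparenthesis$.

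Group the terms by the class $c$ of $w$ in $\Q/\Z$, and write $P_c = \sum_{w \equiv c} (uv)^{d-w}\HD(\nu_\cX(\cA_w))$. Every monomial appearing in $P_c$ has $u$-exponent lying in $-c + \Z$. Monomials with different such cosets are linearly independent in $\Z\lPuiseux u^{-1},v^{-1}\rPuiseux$. So the hypothesis $\HD_{\str}(\cX) \in \Z\llparenthesis u^{-1},v^{-1}\rrparenthesis$ forces $P_c = 0$ for every $c \neq 0$. It therefore suffices to prove:
\begin{itemize}
\item[(i)] if $\cA_w \neq \emptyset$, then $\HD(\nu_\cX(\cA_w))$ is nonzero, and its term of highest total degree is a positive integer multiple of a power of $uv$;
\item[(ii)] given (i), $P_c$ cannot vanish.
\end{itemize}
For (ii), let $\delta$ be the maximum over $w \equiv c$ of the total degree of $(uv)^{d-w}\HD(\nu_\cX(\cA_w))$. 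The top-degree parts of the terms achieving $\delta$ are all positive multiples of the same monomial $(uv)^{\delta/2}$. They therefore add up to a nonzero coefficient rather than cancelling.

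For the positivity statement in (i), I would use \autoref{theoremStringyClassWellDefined}(c):
\[
	\nu_\cX(\cA_w) = \lim_n \e(\theta_n(\cA_w))\,\bL^{-(n+1)d}.
\]
Each $\theta_n(\cA_w)$ is constructible by part (b) of that theorem, so it is partitioned by finite type stacks with affine stabilizers. Stratify each such stack into global quotients $[Z/\GL_N]$ with $Z$ a scheme. For each stratum, $\HD(Z)/\HD(\GL_N)$ has top coefficient equal to the number of top-dimensional components of $Z$ times the monomial $(uv)^{\dim Z - N^2}$. These coefficients are all positive, so the leading coefficient of $\HD(\e(\theta_n(\cA_w)))$ is positive and sits at $(uv)^{\dim\theta_n(\cA_w)}$. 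This uses \autoref{RecallingNormAndDimension}. In the limit, the norms $\Vert\e(\theta_n(\cA_w))\bL^{-(n+1)d}\Vert$ converge to $\Vert \nu_\cX(\cA_w)\Vert$. Provided that limit is positive, the top-degree coefficient stabilizes, and it is then a positive multiple of a power of $uv$, as (i) requires.

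The main obstacle is proving that $\nu_\cX(\cA_w) \neq 0$ whenever $\cA_w \neq \emptyset$. My approach is to lift to $\cX'$. There $\cC' = $ the preimage of $\wt_\cX^{-1}(w)$ is a nonempty cylinder on a smooth tame stack. It should contain arcs generically in the open $\cU'$, where $\pi$ is an isomorphism onto $\cX^s$ after rigidification, and such arcs form a nonempty cylinder of positive measure. This positive measure should then transfer to $\cA_w$, because over the stable locus the twisted arc spaces of $\cX'$ and $\cX$ agree up to the bounded fiber dimensions controlled in \autoref{imageOfBoundedCylinderAndInequality}. If that density statement is hard to obtain, I would instead argue directly that the dimensions of the $\theta_n(\cA_w)$ grow exactly like $(n+1)d$ minus a bounded constant. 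The lower bound would come from smoothness of $\cX'$, and the comparison from \autoref{representableConstructibleBound}.
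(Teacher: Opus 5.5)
Your reduction is correct and is the paper's argument. The coset bookkeeping and the non-cancellation step (ii) are \autoref{propositionPositiveCoefficientForNonIntegerTerm}, which simply maximizes $-q+\log\Vert\nu_\cX(\cA_\cX\cap\wt_\cX^{-1}(q))\Vert$ over all non-integral $q$. Your (i), via a stratification into quotients $[Z/\GL_N]$ and stabilization of the top coefficient along $\theta_n^{-1}(\theta_n(\cA_w))$, is \autoref{lemmaDiagonalTermsOfHDPolyOfStack} together with \autoref{lemmaHDVolumesDiagonalTerms}. Your version controls the whole top-degree part rather than only the diagonal coefficients; it is slightly stronger and also fine.

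The genuine gap is the step you flag yourself: you never actually prove $\nu_\cX(\cA_w)\neq 0$, and the tools you name point the wrong way. Both \autoref{imageOfBoundedCylinderAndInequality} (which gives $\Vert\nu_{\cX'}(\cC')\Vert\geq\Vert\nu_\cX(\cC)\Vert$) and \autoref{representableConstructibleBound} bound the image on $\cX$ from \emph{above}. So they cannot transfer positivity from $\cX'$ to $\cX$. This sinks your main route, and equally your fallback: there \autoref{representableConstructibleBound} yields $\dim\theta_n(\cA_w)\leq\dim\theta_n(\cC')$, the opposite of the lower bound you need. In addition, the claim that $\cC'$ contains arcs generically in $\cU'$ is only asserted. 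Moreover, the set of all such arcs is in general a countable increasing union of cylinders, not a cylinder.

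What is needed is qualitative, and it is how the paper argues.
\begin{enumerate}
\item Let $\cY'$ be a closed substack of $\cX'$ supported on $|\cX'|\setminus|\cU'|$. Since $\cC'$ is a nonempty bounded cylinder on the smooth stack $\cX'$, \autoref{RecallingNormAndDimension} gives $\nu_{\cX'}(\cC')\neq 0$. On the other hand, $\nu_{\cX'}(|\sJ(\cY')|)=0$ by \cite[Theorem 9.1]{SatrianoUsatine6}.
\item As $|\sJ(\cY')|=\bigcap_n\theta_n^{-1}(|\sJ_n(\cY')|)$, some $\cC'_m=\cC'\cap\theta_m^{-1}(|\sJ_m(\cX')|\setminus|\sJ_m(\cY')|)$ is nonempty.
\item By the \emph{first} assertion of \autoref{imageOfBoundedCylinderAndInequality} (that is, \cite[Lemma 12.5]{SatrianoUsatine6}), the image $\cC_m\subset\cA_w$ of $\cC'_m$ is a bounded cylinder. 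It is nonempty, so $\nu_\cX(\cC_m)\neq 0$, again by \autoref{RecallingNormAndDimension}. No comparison of fiber dimensions is required.
\item For all sufficiently large $n$, \autoref{containedConstructiblesBound} applied to $\theta_n(\cC_m)\subset\theta_n(\cA_w)$ gives $\Vert\nu_\cX(\theta_n^{-1}(\theta_n(\cA_w)))\Vert\geq\Vert\nu_\cX(\cC_m)\Vert>0$. Then \autoref{theoremStringyClassWellDefined}(c) forces $\nu_\cX(\cA_w)\neq 0$.
\end{enumerate}
With this inserted, the rest of your argument is complete.
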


We begin with some lemmas.

\begin{lemma}\label{lemmaDiagonalTermsOfHDPolyOfStack}
Let $\cY$ be a finite type Artin stack over $\C$ with affine geometric stabilizers.

\begin{enumerate}

\item If $p \in \Z$ such that $p > \dim\cY$, then the coefficient of $u^p v^p$ in $\HD(\e(\cY))$ is zero.

\item The coefficient of $u^{\dim\cY} v^{\dim\cY}$ in $\HD(\e(\cY))$ is positive.

\end{enumerate}

\end{lemma}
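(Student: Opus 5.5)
We reduce to varieties modulo linear algebraic groups and then to schemes, where both statements are standard facts about Hodge--Deligne polynomials. First, stratification. By Kresch's result, a finite type Artin stack with affine geometric stabilizers is stratified by locally closed substacks of the form $[Y_j/\GL_{n_j}]$, with each $Y_j$ a quasi-projective scheme. Then
\[
\e(\cY)=\sum_j \e(Y_j)\,\e(\GL_{n_j})^{-1},
\]
since $\GL_n$ is special, so the torsor $Y_j\to[Y_j/\GL_{n_j}]$ is Zariski locally trivial in the sense needed in $K_0(\Stack)$. Here $\dim\cY=\max_j(\dim Y_j-n_j^2)$.

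Second, the scheme case. For a finite type scheme $Y$ of dimension $d$, the Hodge--Deligne polynomial has the following properties:
\begin{itemize}
\item[(i)] all monomials $u^pv^q$ satisfy $p,q\le d$;
\item[(ii)] the coefficient of $(uv)^d$ equals the number of $d$-dimensional irreducible components, which is positive. This holds because $H^{2d}_c(Y)$ is pure of type $(d,d)$ with dimension the number of top components, and the lower cohomology groups $H^i_c$ with $i<2d$ only contribute $h^{p,q}$ with $p+q\le i<2d$ (\autoref{pqPieceVanishesAboveDegree}).
\end{itemize}
For $\GL_n$, $\HD(\e(\GL_n))=\prod_{k=0}^{n-1}((uv)^n-(uv)^k)$. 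Its inverse in $\Z\llparenthesis u^{-1},v^{-1}\rrparenthesis$ is
\[
(uv)^{-n^2}\prod_{k}(1-(uv)^{k-n})^{-1},
\]
which is $(uv)^{-n^2}$ times a power series in $(uv)^{-1}$ with constant term $1$ and nonnegative coefficients.

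Third, combining these. Each stratum contributes $\HD(\e(Y_j))\,(uv)^{-n_j^2}(1+\text{higher powers of }(uv)^{-1})$. All its monomials have $p,q\le \dim Y_j-n_j^2\le\dim\cY$, and this gives (1).

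For (2), consider only the strata with $\dim Y_j-n_j^2=\dim\cY$. For each such stratum, the coefficient of $(uv)^{\dim\cY}$ is the product of the leading coefficients. That product is the positive coefficient of $(uv)^{\dim Y_j}$ in $\HD(\e(Y_j))$ times $1$. For this to be valid we need that no other monomial $u^pv^q$ of $\HD(\e(Y_j))$ can combine with $(uv)^{-n_j^2-k}$, $k\ge 0$, to give $(uv)^{\dim\cY}$. That requires $p=q=\dim Y_j+k$, which is impossible for $k>0$ by (i). Strata of smaller dimension contribute nothing in degree $(uv)^{\dim\cY}$. Summing over strata gives a positive coefficient.

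The main obstacle is justifying that the stratification is compatible with $\e$ and with $\dim$. Alternatively one can invoke \autoref{RecallingNormAndDimension} for the dimension bookkeeping. The leading-term argument itself is routine.
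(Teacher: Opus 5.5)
Your proposal is correct and follows essentially the paper's route: the paper's proof observes that $\HD(\e(\GL_n))$ is a polynomial in $uv$ and then reruns the argument of \cite[Lemma 4.2]{SatrianoUsatine5}. That argument is the same stratification of $\cY$ into quotients $[Y_j/\GL_{n_j}]$ followed by the leading diagonal coefficient comparison you carry out, with part (2) understood for $\cY\neq\emptyset$.
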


\begin{proof}
Noting that $\e(\GL_{n,k})$ is a polynomial in $\bL$ and therefore $\HD(\e(\GL_{n,k}))$ is a polynomial in $uv$, the exact same proof as in \cite[Lemma 4.2]{SatrianoUsatine5} gives the desired result.
\end{proof}

\begin{lemma}\label{lemmaHDVolumesDiagonalTerms}
Let $w \in \Q$, set $\cA = \cA_\cX \cap \wt_\cX^{-1}(w)$, and set $d = \log \Vert \nu_\cX(\cA) \Vert \in \Z \cup \{-\infty\}$.

\begin{enumerate}

\item If $p \in \Z$ such that $p > d$, then the coefficient of $u^p v^p$ in $\HD(\nu_\cX(\cA))$ is zero.

\item If $\cA \neq \emptyset$, then $\nu_\cX(\cA) \neq 0$ and the coefficient of $u^d v^d$ in $\HD(\nu_\cX(\cA))$ is positive.

\end{enumerate}
\end{lemma}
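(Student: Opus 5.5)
We approximate $\nu_\cX(\cA)$ by the truncations supplied by \autoref{theoremStringyClassWellDefined}(c), namely
\[
	\nu_\cX(\cA) = \lim_{n \to \infty} \e(\theta_n(\cA))\bL^{-(n+1)\dim\cX},
\]
where each $\theta_n(\cA)$ is a quasi-compact locally constructible subset of $|\sJ_n(\cX)|$ by \autoref{theoremStringyClassWellDefined}(b). The idea is to apply \autoref{lemmaDiagonalTermsOfHDPolyOfStack} to finite type stacks realizing the classes $\e(\theta_n(\cA))$, and then pass to the limit.

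First, for each $n$ we choose a partition of $\theta_n(\cA)$ into finitely many locally closed substacks of $\sJ_n(\cX)$ and let $\cY_n$ be their disjoint union. Then $\cY_n$ is a finite type stack with affine geometric stabilizers, because $\sJ_n(\cX)$ has affine diagonal, and $\e(\cY_n) = \e(\theta_n(\cA))$. \autoref{RecallingNormAndDimension} gives
\[
	\log\Vert \e(\cY_n)\bL^{-(n+1)\dim\cX}\Vert = \dim\cY_n - (n+1)\dim\cX =: d_n.
\]
Multiplication by $\bL^{-(n+1)\dim\cX}$ multiplies $\HD$ by $(uv)^{-(n+1)\dim\cX}$, so \autoref{lemmaDiagonalTermsOfHDPolyOfStack} translates into two facts about $a_n = \e(\cY_n)\bL^{-(n+1)\dim\cX}$. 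The coefficient of $u^pv^p$ in $\HD(a_n)$ vanishes for $p > d_n$, and it is positive for $p = d_n$ when $\cA \neq \emptyset$.

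Next we pass to the limit. Since $a_n \to \nu_\cX(\cA)$ in the norm, there is an $N$ such that $\Vert a_n - \nu_\cX(\cA)\Vert < \exp(\min(d,0)-1)$ for all $n \ge N$ (any small threshold works if $d=-\infty$). By the ultrametric property, for $n\ge N$ we get $\Vert a_n\Vert = \Vert\nu_\cX(\cA)\Vert$ whenever $\nu_\cX(\cA)\neq 0$, so $d_n = d$ in that case. Moreover $\HD$ is continuous, and any element of norm less than $\exp(p)$ has $\HD$ supported in total degree $< 2p$; this uses $\Vert\Theta\Vert\ge\Vert\chi_{\Hdg}(\Theta)\Vert$ together with weights bounding degrees. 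Hence for $n \ge N$ the coefficient of $u^pv^p$ in $\HD(\nu_\cX(\cA))$ agrees with that in $\HD(a_n)$ for every $p \ge d$. This proves (1) when $d$ is finite.

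If $d=-\infty$, we have $\nu_\cX(\cA)=0$, so (1) is trivial. It remains to treat (2), which is the main obstacle: we must exclude the case $\cA\neq\emptyset$ with $\nu_\cX(\cA)=0$. If $\cA \neq \emptyset$, then each $\cY_n$ is nonempty, and the positive top diagonal coefficient shows $a_n\neq 0$ with $\Vert a_n\Vert = \exp(d_n)$. So it suffices to show that $d_n$ is bounded below, for then $a_n$ cannot tend to $0$. For this we use the stable locus. Pick an arc in $\cA$ whose image lies generically in $\cX^s$; such an arc exists because $\cA_\cX$ is the image of arcs of $\cX'$ and the locus $\sJ(\cX'\setminus\cU')$ has measure zero. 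Since $\cA$ is the image of the cylinder $\cC'$ of \autoref{theoremStringyClassWellDefined}'s proof, the cylinders $\cC'_n$ there are nonempty for large $n$ and have positive-norm volume, by the same diagonal-coefficient argument applied on $\cX'$. Using $\nu_\cX(\cA)=\lim_n\nu_\cX(\cC_n)$, we then bound $\Vert\nu_\cX(\cC_n)\Vert$ from below. The cleanest route here is to note that over $\cX^s$ the map $\cX'\to\cX$ is an isomorphism onto its image, so the volumes agree on $\cC_n$ and the nonzero volume on $\cX'$ transfers to $\cX$. Once $\nu_\cX(\cA)\ne0$ is known, $d$ is finite, and the stabilized coefficient of $u^dv^d$ equals the positive coefficient of $u^{d_n}v^{d_n}$ in $\HD(a_n)$ for large $n$.
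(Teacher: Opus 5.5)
Your limiting argument (approximating $\nu_\cX(\cA)$ by the $a_n$, using the ultrametric inequality to force $d_n=d$, and reading off diagonal coefficients from \autoref{lemmaDiagonalTermsOfHDPolyOfStack}) is fine and matches the paper's. The gap is in the step you yourself flag as the crux: proving $\nu_\cX(\cA)\neq 0$. Your justification is that $\cX'\to\cX$ is an isomorphism over $\cX^s$, ``so the volumes agree on $\cC_n$.'' This is false. The arcs in $\cC'_n$ only have their generic point in $\cU'$. Their closed points typically lie over $|\cX|\setminus|\cX^s|$, where $\pi$ is not an isomorphism, so $\sJ(\pi)$ does not preserve motivic volume; a Jacobian-type correction appears. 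For instance, take $\cX=[\bA^2/\bG_m]$ with weights $(1,-1)$. Then $\cX'=[\bA^1/\mu_2]$, mapping by $x\mapsto(x,x)$. The cylinder of untwisted arcs $\{\operatorname{ord}_t x=m\}$ has $\nu_{\cX'}$-volume $(1-\bL^{-1})\bL^{-m}$. Its image $\{\operatorname{ord}_t a=\operatorname{ord}_t b=m\}$ has $\nu_\cX$-volume $(1-\bL^{-1})\bL^{-2m}$, so for $m\geq 1$ even the norms differ. The only comparison the paper provides is $\Vert\nu_{\cX'}(\cC')\Vert\geq\Vert\nu_\cX(\cC)\Vert$ from \autoref{imageOfBoundedCylinderAndInequality}, and that points the wrong way for transferring a lower bound from $\cX'$ to $\cX$.

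The repair is to not transfer volumes at all. Your argument producing some $m$ with $\cC'_m\neq\emptyset$ is fine and is the paper's. From there, the first conclusion of \autoref{imageOfBoundedCylinderAndInequality} says $\cC_m$ is a bounded cylinder in $|\sJ(\cX)|$; it is nonempty and contained in $\cA$. Hence for $n$ large, $\nu_\cX(\cC_m)=\e(\theta_n(\cC_m))\bL^{-(n+1)\dim\cX}\neq 0$ by \autoref{RecallingNormAndDimension}. Since $\theta_n(\cC_m)\subset\theta_n(\cA)$, \autoref{containedConstructiblesBound} then gives $d_n\geq\log\Vert\nu_\cX(\cC_m)\Vert$, which is exactly the lower bound you wanted. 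This is how the paper proceeds: it deduces $\nu_\cX(\cA)\neq 0$ from the nonempty bounded cylinder $\cC_m\subset\cA$ on $\cX$ itself, and only afterwards runs the approximation argument.
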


\begin{proof}
If $\cA = \emptyset$, there is nothing to show, so we will assume $\cA \neq \emptyset$. 

We will first show that $\nu_\cX(\cA) \neq 0$. Let $\pi: \cX' \to \cX$ be the canonical reduction of stabilizers of Edidin--Rydh, let $\cX^s$ be the stable locus of $\cX$, let $\cY'$ be a closed substack of $\cX'$ supported on $|\cX'| \setminus |\pi^{-1}(\cX^s)|$, let $\cC'$ be the preimage of $\wt_{\cX}^{-1}(w)$ in $|\sJ(\cX')|$, and for each $n \in \Z_{\geq 0}$, set
\[
	\cC'_n = \cC' \cap \theta_n^{-1}(|\sJ_n(\cX')| \setminus |\sJ_n(\cY')|),
\]
and let $\cC_n$ be the image of $\cC'_n$ in $|\sJ(\cX)|$. By \cite[Remark 4.21]{SatrianoUsatine6}, $\cC'$ is a cylinder, and it is bounded since $\cX'$ is finite type and tame. Thus each $\cC'_n$ is a bounded cylinder disjoint from $|\sJ(\cY')|$, so by \autoref{imageOfBoundedCylinderAndInequality}, each $\cC_n$ is a bounded cylinder. By construction, each $\cC_n \subset \cA$. Now since $\sJ(\pi)(\cC') = \cA \neq \emptyset$, we have $\cC' \neq \emptyset$. Since a nonempty bounded cylinder has nonzero volume, e.g., by \autoref{RecallingNormAndDimension}, we have $\nu_{\cX'}(\cC') \neq 0$. Thus since $\nu_{\cX'}(|\sJ(\cY')|) = 0$ by \cite[Theorem 9.1]{SatrianoUsatine6}, we have $\cC' \not\subset |\sJ(\cY')|$. Therefore since $|\sJ(\cY')| = \bigcap_{n \in \Z_{\geq 0}} \theta^{-1}_n(|\sJ_n(\cY')|)$, there exists some $m \in \Z_{\geq 0}$ such that $\cC' \not\subset \theta^{-1}_m(|\sJ_m(\cY')|)$. Then $\cC'_m \neq \emptyset$, so $\cC_m \neq \emptyset$. Using again that a nonempty bounded cylinder has nonzero volume, we have that $\nu_\cX(\cC_m) \neq 0$. Then since $\cC_m \subset \cA$, we have that $\nu_\cX(\cA) \neq 0$.

For each $n \in \Z_{\geq 0}$, set $\cA_n = \theta_n^{-1}(\theta_n(\cA))$. Then by \autoref{theoremStringyClassWellDefined}, each $\cA_n$ is a bounded cylinder, and
\[
	\nu_\cX(\cA) = \lim_{n \to \infty}(\nu_\cX(\cA_n)).
\]
Thus there exists some $r \in \Z_{\geq 0}$ such that
\[
	\log \Vert \nu_\cX(\cA) - \nu_\cX(\cA_r) \Vert < d.
\]
Therefore every nonzero monomial of $\HD(\nu_\cX(\cA) - \nu_\cX(\cA_r))$ has total degree strictly less than $2d$. Thus it is sufficient to show that for $p > d$, the coefficient of $u^pv^p$ in $\HD(\nu_\cX(\cA_r))$ is zero and that the coefficient of $u^dv^d$ in $\HD(\nu_\cX(\cA_r))$ is positive.

Since $\cA_r$ is a bounded cylinder, there exists a finite type Artin stack $\cZ$ over $\C$ with affine geometric stabilizers such that $\nu_\cX(\cA_r) = \e(\cZ)$. Since \[
	\Vert \nu_\cX(\cA) - \nu_\cX(\cA_r) \Vert < \exp(d) = \Vert \nu_\cX(\cA) \Vert,
\]
the non-Archimedian triangle inequality gives that 
\[
	\Vert \nu_\cX(\cA_r) \Vert = \Vert \nu_\cX(\cA) \Vert = \exp(d).
\]
Therefore $\dim\cZ = d$ by \autoref{RecallingNormAndDimension}. Thus the desired statement about the coefficients of $\HD(\nu_\cX(\cA_r)) = \HD(\e(\cZ))$ follows from \autoref{lemmaDiagonalTermsOfHDPolyOfStack}, and we are done.
\end{proof}

Before we state the next proposition, recall that the function $\wt_\cX$ takes only finitely many values on $\cA_\cX$ by \cite[Proposition 4.19]{SatrianoUsatine6} and the fact that, since the canonical reduction of stabilizers of $\cX$ is tame, we have $\cA_\cX$ is bounded. Note also that for any $w \in \wt_\cX(\cA_\cX)$, we have $\log \Vert \nu_\cX(\cA_\cX \cap \wt_\cX^{-1}(w)) \Vert \in \Z$ by \autoref{lemmaHDVolumesDiagonalTerms}.

\begin{proposition}\label{propositionPositiveCoefficientForNonIntegerTerm}
Suppose $\wt_\cX(\cA_\cX) \not\subset \Z$, and let $q$ be an element of $\wt_\cX(\cA_\cX) \setminus \Z$ that maximizes 
\[
	-q + \log \Vert \nu_\cX(\cA_\cX \cap \wt_\cX^{-1}(q)) \Vert \in \Q.
\]
Then the coefficient of $(uv)^{\dim\cX - q + \log \Vert \nu_\cX(\cA_\cX \cap \wt_\cX^{-1}(q)) \Vert}$ in $\HD_\str(\cX)$ is positive.
\end{proposition}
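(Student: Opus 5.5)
We plan to expand $\HD_\str(\cX)$ as a finite sum over the weight values and isolate the contribution of the monomial in question. Since $\wt_\cX$ takes finitely many values on $\cA_\cX$, we write
\[
	\HD_\str(\cX) = \sum_{w \in \wt_\cX(\cA_\cX)} (uv)^{\dim\cX - w}\, \HD(\nu_\cX(\cA_\cX \cap \wt_\cX^{-1}(w))).
\]
For each $w$ we set $d_w = \log \Vert \nu_\cX(\cA_\cX \cap \wt_\cX^{-1}(w)) \Vert$. This is an integer by \autoref{lemmaHDVolumesDiagonalTerms}, because $w \in \wt_\cX(\cA_\cX)$ forces $\cA_\cX \cap \wt_\cX^{-1}(w)$ to be nonempty. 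Because $\HD$ of an element of $\widehat{\sM}_\C$ lies in $\Z\llparenthesis u^{-1},v^{-1}\rrparenthesis$, every monomial of the $w$-summand has the form $u^{\dim\cX - w + a} v^{\dim\cX - w + b}$ with $a, b \in \Z$. We then consider the target exponent $e = \dim\cX - q + d_q$ and ask which summands can contribute a term $(uv)^e$.

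First, a summand with $w \in \Z$ contributes only integer exponents. Since $q \notin \Z$ and $d_q \in \Z$, the exponent $e$ is not an integer, so these summands contribute nothing to $(uv)^e$. Second, take $w \notin \Z$. A term $(uv)^e$ in the $w$-summand would come from a diagonal term $(uv)^p$ of $\HD(\nu_\cX(\cA_\cX \cap \wt_\cX^{-1}(w)))$ with $p = e - \dim\cX + w = d_q - q + w$, and this requires $p \in \Z$. By the maximality of $q$ we have $-w + d_w \leq -q + d_q$, that is, $p \geq d_w$. If $p > d_w$, the coefficient vanishes by \autoref{lemmaHDVolumesDiagonalTerms}(1). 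If $p = d_w$, then $w$ is itself a maximizer, and the coefficient of $(uv)^{d_w}$ is positive by \autoref{lemmaHDVolumesDiagonalTerms}(2).

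So the coefficient of $(uv)^e$ in $\HD_\str(\cX)$ equals the sum of the positive diagonal leading coefficients over all non-integral maximizers $w$. This set contains $q$, so the sum is positive. There is no cancellation, since every contribution is positive, and this is exactly why the statement allows arbitrary ties in the maximization.

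The main point needing care is the claim that only diagonal monomials matter. A term $u^{\alpha}v^{\beta}$ with $\alpha \ne \beta$ can never equal $(uv)^e$, so only the $(uv)^p$ coefficient of each volume is relevant, and that is exactly what \autoref{lemmaHDVolumesDiagonalTerms} controls. We also need the bookkeeping that $\HD$ is applied after multiplying by $\bL^{\dim\cX - w}$, which maps to $(uv)^{\dim\cX-w}$ in $\Z\lPuiseux u^{-1},v^{-1}\rPuiseux$, and that the finite sum commutes with $\HD$ by continuity.
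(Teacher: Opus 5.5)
Your proof is correct and follows the same route as the paper. The paper also splits off the integral-weight part of $\HD_\str(\cX)$, which lies in $\Z\llparenthesis u^{-1},v^{-1}\rrparenthesis$ and so cannot contribute to the non-integral exponent, and then applies \autoref{lemmaHDVolumesDiagonalTerms} with the maximality of $q$ to the non-integral part. Your explicit treatment of ties (each non-integral maximizer contributes a positive coefficient, so nothing cancels) simply spells out what the paper leaves implicit in ``this follows from our choice of $q$.''
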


\begin{proof}
For each $w \in \Q$, set $\cA^w = \cA_\cX \cap \wt_{\cX}^{-1}(w)$. Then 
\[
	\HD_{\str}(\cX) = \HD\left(\sum_{w \in \Q} \bL^{\dim\cX-w} \nu_\cX(\cA^w) \right).
\]
Now set
\[
	f(u,v) = \HD\left(\sum_{w \in \Q \setminus \Z} \bL^{\dim\cX-w} \nu_\cX(\cA^w) \right).
\]
Then
\[
	\HD_{\str}(\cX) - f(u,v) = \HD\left(\sum_{w \in \Z} \bL^{\dim\cX-w} \nu_\cX(\cA^w) \right) \in \Z \llparenthesis u^{-1}, v^{-1} \rrparenthesis.
\]
Therefore since $\dim\cX - q + \log \Vert \nu_\cX(\cA^q) \Vert \notin \Z$, it is sufficient to show that the coefficient of $(uv)^{\dim\cX - q + \log \Vert \nu_\cX(\cA^q) \Vert}$ in $f(u,v)$ is positive, but this follows from our choice of $q$ and \autoref{lemmaHDVolumesDiagonalTerms}.
\end{proof}

\begin{proof}[Proof of \autoref{propositionIntegralWeightFunction}]
The desired result follows immediately from \autoref{propositionPositiveCoefficientForNonIntegerTerm} and the fact that in its statement, $\dim\cX - q + \log \Vert \nu_\cX(\cA_\cX \cap \wt_\cX^{-1}(q))\Vert \notin \Z$.
\end{proof}

\section{Smoothness of the truncation morphisms}\label{sectionSmoothnessOfTruncationMorphisms}

The goal of this section is to prove the following theorem.

\begin{theorem}\label{theoremSmoothnessOfTruncation}
Let $\cX$ be a smooth equidimensional finite type Artin stack over $k$ with affine diagonal and a good moduli space, and let $n \in \Z_{\geq 0}$. Then $\sJ_n(\cX)$ is equidimensional, smooth over $k$, and has dimension $(n+1)\dim\cX$, and the truncation morphism $\theta^{n+1}_n: \sJ_{n+1}(\cX) \to \sJ_n(\cX)$ is smooth of constant relative dimension $\dim\cX$.
\end{theorem}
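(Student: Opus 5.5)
The plan is to reduce to a local quotient presentation and then use deformation theory of representable maps out of the twisted jets $\cD^\ell_n$. Since $\cX$ has a good moduli space and affine diagonal, the local structure theorem of Alper--Hall--Rydh (in characteristic $0$) gives an étale cover of $\cX$ by stacks of the form $[\Spec A/\GL_N]$, with $A$ smooth because $\cX$ is smooth. I will first check that the formation of $\sJ^\ell_n(-)$ is compatible with étale base change on the target. Concretely, for $\cX_1 \to \cX$ étale and representable, I expect
\[
\sJ^\ell_n(\cX_1) \cong \sJ^\ell_n(\cX) \times_{\sJ^\ell_0(\cX)} \sJ^\ell_0(\cX_1).
\]
This should follow from the infinitesimal lifting property, because $\cD^\ell_0 \hookrightarrow \cD^\ell_n$ is a nilpotent thickening. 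Since smoothness, relative dimension and dimension are all étale local, this compatibility lets me assume $\cX = [\Spec A/G]$. The argument then proceeds one $\ell$ at a time, because $\sJ_n(\cX)$ is a disjoint union over $\ell$.

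The main step is to show that $\theta^{n+1}_n$ is smooth of relative dimension $\dim \cX$. I would use the infinitesimal criterion together with obstruction theory for extending maps. Let $A'$ be a $k$-algebra, and fix a representable map $f: \cD^\ell_n\otimes A' \to \cX$. The thickening $\cD^\ell_n \hookrightarrow \cD^\ell_{n+1}$ is a square-zero extension with ideal $t^{n+1}\cdot(\text{stuff})$; more precisely, the ideal is the $\mu_\ell$-equivariant module spanned by $t^{(n+1)\ell + j}$ over $\ell$ in terms of $t^{1/\ell}$. The obstructions to extending $f$ lie in $\Ext^1$-groups of $f^*L_\cX$ against this ideal. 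Because $\cX$ is smooth, $L_\cX$ is perfect of amplitude $[0,1]$. Because $\cD^\ell_n$ is cohomologically affine (the group $\mu_\ell$ is linearly reductive) with affine coarse space, the higher cohomology vanishes. The outcome is that obstructions vanish and extensions form a torsor under
\[
H^0\bigl(\cD^\ell_0\otimes A',\, f_0^*T_\cX\otimes(\text{ideal})\bigr) \;\to\; H^0(\ldots\text{of } T^{-1}\ldots).
\]
The ideal is an invertible sheaf on $\cD^\ell_0$, and taking $\mu_\ell$-invariants picks out the $\dim\cX$-dimensional weight piece. In other words, the relative tangent complex of $\theta^{n+1}_n$ is $R\Gamma$ of $f_0^*\bL_\cX^\vee$ twisted by a character. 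Its Euler characteristic is $\dim\cX$, computed as ranks of $T_\cX$ minus automorphisms in that weight.

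This step is the main obstacle. Smoothness needs the relative cotangent complex to have Tor-amplitude in $[-1,0]$ in the appropriate sense, and the $H^{-1}$ (automorphism) part of $f_0^*T_\cX$ in nonzero weight could cause trouble. I would handle this by working with the explicit presentation $[\Spec A/G]$. There, a twisted jet amounts to a $\mu_\ell$-equivariant $G$-torsor with an equivariant map to $\Spec A$, and extending it splits into two independent pieces. The first is extending the torsor, which is unobstructed and smooth because $G$ is smooth. The second is extending the map to the smooth affine $\Spec A$, which is unobstructed, with the fiber of the lifts being an affine space. Representability is an open condition, and it is automatic for thickenings of a representable map because the stabilizer map is determined on the reduced substack $\cD^\ell_0$. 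The relative dimension is then $\dim A - \dim G = \dim \cX$, taking the appropriate weight pieces into account.

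With smoothness in hand, the remaining claims follow by induction on $n$. For the base case, $\sJ^\ell_0(\cX)$ is the stack of representable maps $B\mu_\ell \to \cX$, which is an open and closed part of the cyclotomic inertia. For the dimension statement, I would argue that it is smooth of dimension $\dim\cX$ on each component. The reason is that maps $B\mu_\ell\to[\Spec A/G]$ are given by pairs of a homomorphism $\mu_\ell\to G$ and a fixed point. The locus of such pairs is smooth, since fixed loci of linearly reductive groups on smooth schemes are smooth. Its dimension is $\dim A^{\mu_\ell} + \dim G - \dim Z_G(\mu_\ell) - \dim G$, which should come out to $\dim\cX$. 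Composing smooth maps of relative dimension $\dim\cX$ then gives that $\sJ_n(\cX)$ is smooth and equidimensional of dimension $(n+1)\dim\cX$.
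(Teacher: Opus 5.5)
\textbf{The base case of your induction is wrong.} Your smoothness argument for $\theta^{n+1}_n$ is essentially fine, and it differs from the paper's route. The paper first proves $\sJ_n(\cX)$ smooth of the right dimension and then gets flatness of $\theta^{n+1}_n$ from \autoref{miracleFlatnessForStacks}. You instead use deformation theory over an arbitrary base $A'$. There obstructions vanish because $\cD^\ell_0\otimes A'$ is cohomologically affine, and the fiber is the vector bundle stack of the perfect complex $R\Gamma(\cD^\ell_0\otimes A', f_0^*T_\cX)$, which sits in degrees $[-1,0]$. So the automorphism part causes no trouble, and the detour through $[\Spec A/\GL_N]$ is unnecessary.

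The gap is that you identify $\sJ^\ell_0(\cX)$ with representable maps $B\mu_\ell\to\cX$, i.e.\ with part of $I_{\mu_\ell}(\cX)$. This is false for $\ell>1$. We have $\cD^\ell_0=[\Spec(k[t^{1/\ell}]/(t))/\mu_\ell]$, and $k[t^{1/\ell}]/(t)\cong k[s]/(s^\ell)$ is nonreduced; $B\mu_\ell$ is only its reduction. The restriction map $\sJ^\ell_0(\cX)\to I_{\mu_\ell}(\cX)$ has positive-dimensional fibers. In the tame case it is an iterated affine bundle of relative dimension $\dim\cX-\dim\cY$ over a component $\cY$, as in the last section of the paper. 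Your count $\dim(\Spec A)^{\mu_\ell}-\dim Z_G(\mu_\ell)$ is the dimension of a twisted sector, which is usually smaller than $\dim\cX$.

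For instance, take $\cX=[\bA^1/\mu_2]$ and $\ell=2$. The twisted sector is $B\mu_2$, of dimension $0$. The corresponding component of $\sJ^2_0(\cX)$ parametrizes the equivariant maps $x\mapsto as$ and is $[\bA^1/\mu_2]$, of dimension $1$. So your induction has no valid starting point. The smoothness, the equidimensionality, and the value $(n+1)\dim\cX$ for $\sJ_n(\cX)$ are therefore not established.

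The same confusion shows up in your main step, where you say taking invariants ``picks out the $\dim\cX$-dimensional weight piece.'' On $B\mu_\ell$ only the weight-zero part would survive. On $\cD^\ell_0$, every character occurs exactly once in $k[s]/(s^\ell)$. So each summand $\cO(a)$ of a vector bundle contributes exactly one invariant section, and $H^0$ has dimension equal to the rank. That is the fact that makes the count work.

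This fact also repairs the base case, and in fact avoids the induction altogether:
\begin{enumerate}
\item Each $\sJ^\ell_n(\cX)$ is open in $\uHom_k(\cD^\ell_n,\cX)$, which is smooth by the same vanishing of $\Ext^1$.
\item Over a $k'$-point $\varphi$, the fiber of $\sL_1(\sJ_n(\cX))\to\sJ_n(\cX)$ is $\bA^r\times B\bG_a^s$ with $r-s=\chi(R\Gamma(\cD^\ell_{n,k'},\varphi^*T_\cX))=(n+1)\dim\cX$. This uses that a rank $r'$ vector bundle on $\cD^\ell_{n,k'}$ has $(n+1)r'$-dimensional global sections.
\item The tangent fibers of a smooth stack have dimension equal to its local dimension, so this gives the dimension statement directly.
\end{enumerate}
This is how the paper proceeds.
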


Throughout this section if $n \in \Z_{\geq 0}$, $\ell \in \Z_{>0}$, and $A$ is a $k$-algebra, we will let $\cD^\ell_{n, A}$ denote the stack quotient $[\Spec( A[t^{1/\ell}]/(t^{n+1}) ) / \mu_\ell]$, where $\xi \in \mu_\ell$ acts on $A[t^{1/\ell}]/(t^{n+1})$ by $f(t^{1/\ell}) \mapsto f(\xi t^{1/\ell})$. In the special case $\ell =1$, we will use the notation $D_{n, A} = \cD^1_{n,A} = \Spec(A[t]/(t^{n+1}) ).$

We will need the following lemma.

\begin{lemma}\label{truncatedTwistedDiscRankDimensionGlobalSections}
Let $n \in \Z_{\geq 0}$, let $\ell \in \Z_{>0}$, and let $k'$ be a field extension of $k$. If $\cF$ is a rank $r$ locally free sheaf on $\cD^\ell_{n, k'}$, then $\dim_{k'}H^0(\cF) = (n+1)r$.
\end{lemma}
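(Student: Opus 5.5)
The plan is to reduce the claim to a statement about $\mu_\ell$-equivariant modules over $R = k'[t^{1/\ell}]/(t^{n+1})$. By definition of the quotient stack, a locally free sheaf $\cF$ of rank $r$ on $\cD^\ell_{n,k'} = [\Spec R/\mu_\ell]$ is the same as a $\mu_\ell$-equivariant projective $R$-module $M$ of rank $r$. Moreover $H^0(\cF) = M^{\mu_\ell}$. Since $\mu_\ell$ is linearly reductive in characteristic $0$, taking invariants is the same as taking the degree-$0$ part of the $\Z/\ell$-grading on $M$ induced by the action.

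The key steps are as follows.

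First, $R$ is a local Artinian ring with residue field $k'$. Its maximal ideal $(t^{1/\ell})$ is homogeneous for the grading, so $M$ is free. I would show that it has a homogeneous basis. Take the fiber $M/\mathfrak{m}M$, which is an $r$-dimensional $\mu_\ell$-representation over $k'$. It decomposes into characters because $\mu_\ell$ is diagonalizable and $k'$ contains the $\ell$-th roots of unity, as $k$ is algebraically closed. Choose a basis of eigenvectors of characters $a_1,\dots,a_r$. Lift each eigenvector to an element of $M$ of the same degree, which is possible because the graded projection $M \to M/\mathfrak{m}M$ is surjective in each degree. By Nakayama these lifts form an $R$-basis. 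Hence
\[
M \cong \bigoplus_{i=1}^r R(a_i)
\]
as graded modules, that is, $\cF \cong \bigoplus_i \cO(a_i)$, matching the notation used for $B\mu_\ell$ earlier.

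Second, I would compute the invariants of each summand directly. The monomials $t^{j/\ell}$ for $0 \le j \le \ell(n+1)-1$ form a $k'$-basis of $R$, and $t^{j/\ell}$ has weight $j \bmod \ell$. Twisting by $a_i$ shifts all the weights by $a_i$. So the degree-$0$ part of $R(a_i)$ is spanned by those $t^{j/\ell}$ with $j$ in a single residue class mod $\ell$. There are exactly $(n+1)\ell/\ell = n+1$ such $j$ in the range $0 \le j < \ell(n+1)$.

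Summing over $i$ gives $\dim_{k'} H^0(\cF) = (n+1)r$.

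The main obstacle is the first step, namely justifying that an equivariant locally free sheaf splits as a sum of character twists of the structure sheaf. The essential inputs are the graded Nakayama argument and the linear reductivity of $\mu_\ell$, which gives exactness of invariants. Both are standard. I would state the correspondence between sheaves on the quotient stack and graded modules explicitly before using it.
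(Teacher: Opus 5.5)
Your proof is correct and follows the same route as the paper: the paper cites \cite[Proposition 3.1]{SatrianoUsatine6} for the splitting $\cF \cong \bigoplus_i \cO(a_i)$ and then notes that the invariants of each twisted summand are spanned by $t^{a/\ell}, t^{1+a/\ell}, \dots, t^{n+a/\ell}$. The only difference is that you prove the splitting directly via the graded Nakayama argument instead of citing it (the appeal to linear reductivity is harmless but unnecessary, since invariants of a diagonalizable group are just the degree-$0$ part).
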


\begin{proof}
By \cite[Proposition 3.1]{SatrianoUsatine6}, $\cF$ is a direct sum of coherent sheaves isomorphic to $\cO_{\cD^\ell_{n, k'}} (a)$ for some $a \in \{0, \dots, \ell-1\}$. Thus we only need to show that $\dim_{k'}H^0( \cO_{\cD^\ell_{n, k'}} (a) ) = n+1$, but this follows from the fact that the $k'$ vector space of $\mu_\ell$-invariants of $(k'[ t^{1/\ell} ] / (t^{n+1}))(a)$ is spanned by $t^{a/\ell}, t^{1 + a/\ell}, \dots, t^{n+a/\ell}$.
\end{proof}

If $\cY$ is an Artin stack over $k$, it is natural to think of the fibers of the truncation map $\sL_1(\cY) \to \sL_0(\cY) = \cY$ as a stacky version of tangent spaces of $\cY$. The next proposition describes these stacky tangent spaces for $\sJ_n(\cX)$ and will later be used to compute the dimension of $\sJ_n(\cX)$.

\begin{proposition}\label{computingStackyTangentSpaceOfTwistedJetStack}
Let $\cX$ be a smooth equidimensional finite type Artin stack over $k$ with affine diagonal and a good moduli space, let $n \in \Z_{\geq 0}$, let $k'$ be a field extension of $k$, and let $\Spec(k') \to \sJ_n(\cX)$ be a $k$-morphism. Then there exist $r, s \in \Z_{\geq 0}$ such that $r-s = (n+1)\dim\cX$ and
\[
	\sL_1(\sJ_n(\cX)) \times_{\sJ_n(\cX)} \Spec(k') \cong \bA^{r}_{k'} \times_{k'} B\bG_{a,k'}^{s}
\]
as stacks over $k'$. In particular,
\[
	\dim\left( \sL_1(\sJ_n(\cX)) \times_{\sJ_n(\cX)} \Spec(k')\right) = (n+1)\dim\cX.
\]
\end{proposition}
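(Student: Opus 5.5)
The plan is to identify the fiber of $\sL_1(\sJ_n(\cX)) \to \sJ_n(\cX)$ over the point $x: \Spec(k') \to \sJ_n(\cX)$ with a stack of first-order deformations, and to compute that stack from the tangent complex of $\cX$ pulled back to the twisted jet. The point $x$ corresponds to a representable morphism $f: \cD^\ell_{n,k'} \to \cX$ for some $\ell$. A $k'$-point of the fiber, or more generally an $A$-point for a $k'$-algebra $A$, is a representable morphism $\cD^\ell_{n,A} \times_A \Spec(A[\epsilon]/\epsilon^2) \to \cX$ restricting to $f$ modulo $\epsilon$. Representability is automatic for such extensions, since it is detected on the closed fiber. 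The stack of such extensions is the stack of lifts of $f$ along a square-zero thickening of $\cD^\ell_{n}$ whose ideal is the structure sheaf.

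Since $\cX$ is smooth with affine diagonal, its tangent complex $T_\cX$ is perfect of amplitude $[-1,0]$. We would write $f^*T_\cX$ locally as $[E^{-1} \to E^0]$ with $E^{-1}$ the Lie algebra of stabilizers. Deformation theory for smooth Artin stacks then says the groupoid of lifts is nonempty, because obstructions live in $H^1$ of $f^*T_\cX$, and this $H^1$ vanishes for two reasons: $\cD^\ell_{n,k'}$ is cohomologically affine, $\mu_\ell$ being linearly reductive in characteristic $0$, and $T_\cX$ has no positive-degree cohomology. The same input shows that the groupoid of lifts is the quotient stack
\[
	[\,H^0(\cD^\ell_{n}, f^*T_\cX)\ /\ H^{-1}(\cD^\ell_{n}, f^*T_\cX)\,],
\]
that is, $\bA^{r}\times B\bG_a^{s}$ with $r = \dim H^0$ and $s = \dim H^{-1}$ of $R\Gamma(f^*T_\cX)$. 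This works in families over $A$, giving the claimed isomorphism. To see that the picard stack is split, note that the homotopy action of the vector group $H^{-1}$ on $H^0$ is trivial, so over a field the quotient is a product. If this splitting needs more care, we would use that any vector-bundle Picard stack over a field with $\pi_1$ a vector space is noncanonically split.

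It remains to compute $r - s = \chi(R\Gamma(\cD^\ell_{n,k'}, f^*T_\cX))$. Because $\Gamma$ is exact on $\cD^\ell_{n,k'}$, this Euler characteristic equals $\dim H^0(E^0) - \dim H^0(E^{-1})$, using a global presentation of $f^*T_\cX$ by locally free sheaves of ranks $e_0$ and $e_{-1}$ with $e_0 - e_{-1} = \dim\cX$. Such a presentation exists because $\cD^\ell_{n,k'}$ is a quotient of an affine scheme by $\mu_\ell$ with resolution property. By \autoref{truncatedTwistedDiscRankDimensionGlobalSections}, the Euler characteristic is $(n+1)(e_0-e_{-1}) = (n+1)\dim\cX$. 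The dimension statement follows since $\dim \bA^r\times B\bG_a^s = r-s$.

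I expect the main obstacle to be the rigorous identification of the fiber with the stack of lifts and its description via $R\Gamma(f^*T_\cX)$ for a mapping stack out of the stacky source $\cD^\ell_n$. To handle this, I would reduce to the local structure theorem of Alper–Hall–Rydh, which applies thanks to the good moduli space. This lets us write $\cX$ étale locally as $[\Spec B/G]$ with $G$ linearly reductive. Lifting then becomes concrete: a lift of a $G$-torsor with equivariant map, with the automorphisms of the lift given by $\mathfrak g \otimes H^0$ and the deformations governed by the two-term complex $\mathfrak g \otimes \cO \to f^*T_{\Spec B}$.
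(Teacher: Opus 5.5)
Your proposal is correct and follows essentially the paper's route. The paper likewise identifies the fiber via deformation theory with $\bA^r_{k'} \times_{k'} B\bG_{a,k'}^s$, where $r = \dim \Ext^0(L\varphi^*L_\cX, \cO)$ and $s = \dim \Ext^{-1}(L\varphi^*L_\cX, \cO)$, and computes $r-s$ from a two-term locally free presentation of $L\varphi^*L_\cX$ (an analogue of \cite[Lemma 2.2]{SatrianoUsatine6}) together with \autoref{truncatedTwistedDiscRankDimensionGlobalSections}; the only differences are that it gets nonemptiness directly from the trivial deformation rather than from vanishing of obstructions, and it needs no Alper--Hall--Rydh local-structure detour.
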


\begin{proof}
Set $\cY = \sL_1(\sJ_n(\cX)) \times_{\sJ_n(\cX)} \Spec(k')$. Let $\varphi: \cD^\ell_{n, k'} \to \cX$ be the representable map corresponding to $\Spec(k') \to \sJ_n(\cX)$, and consider the $k'$ vector spaces
\[
	E^0 = \Ext^0_{\cD^\ell_{n, k'}}(L\varphi^* L_\cX, \cO_{\cD^\ell_{n, k'}})
\]
and
\[
	E^{-1} = \Ext^{-1}_{\cD^\ell_{n, k'}}(L\varphi^* L_\cX, \cO_{\cD^\ell_{n, k'}}).
\]
Set $r = \dim_{k'} E^0$ and $s = \dim_{k'} E^{-1}$. If $A$ is a $k'$-algebra, let $\varphi_A$ be the composition $\cD^\ell_{n, A} \to \cD^\ell_{n, k'} \xrightarrow{\varphi} \cX$. Since $\Spec(A) \to \Spec(A[t]/(t^2))$ has the section given by the inclusion $A \to A[t]/(t^2)$, we have that $\cY(A)$ is nonempty. Therefore deformation theory gives that
\[
	\overline{\cY}(A) = \Ext^0_{\cD^\ell_{n, A}}(L\varphi_A^* L_\cX, \cO_{\cD^\ell_{n, A}}) = E^0 \otimes_{k'} A
\]
and that each object of $\cY(A)$ has automorphism group
\[
	\Ext^{-1}_{\cD^\ell_{n, A}}(L\varphi_A^* L_\cX, \cO_{\cD^\ell_{n, A}}) = E^{-1} \otimes_{k'} A,
\]
so
\[
	\cY \cong \bA_{k'}^{r} \times_{k'} B\bG_{a,k'}^{s}.
\]
We now note that the statement and proof of \cite[Lemma 2.2]{SatrianoUsatine6} remain true if we replace each instance of $k'\llbracket t \rrbracket$ with $k'[t]/(t^{n+1})$, and therefore there exist $r', s' \in \Z_{\geq 0}$ with $r'-s' = \dim\cX$ and an exact triangle
\[
	L\varphi^*L_\cX \to \cF \to \cG
\]
where $\cF$ and $\cG$ are locally free sheaves of ranks $r'$ and $s'$, respectively. By \autoref{truncatedTwistedDiscRankDimensionGlobalSections}, 
\[
	\dim_{k'}H^0\left(\mathscr{H}om(\cF, \cO_{\cD^\ell_{n, k'}})\right) = (n+1)r',
\]
and
\[
	\dim_{k'}H^0\left(\mathscr{H}om(\cG, \cO_{\cD^\ell_{n, k'}})\right) = (n+1)s'.
\]
Therefore by the exact sequence
\[
	0 \to E^{-1} \to H^0\left(\mathscr{H}om(\cG, \cO_{\cD^\ell_{n, k'}})\right) \to H^0\left(\mathscr{H}om(\cF, \cO_{\cD^\ell_{n, k'}})\right)  \to E^0 \to 0,
\]
we have
\[
	r - s = (n+1)r' - (n+1)s' = (n+1)\dim\cX,
\]
and we are done.
\end{proof}

We are now prepared to prove the first part of \autoref{theoremSmoothnessOfTruncation}.

\begin{proposition}\label{TwistedJetStacksAreSmoothAndEquidimensionalOfCorrectDimension}
Let $\cX$ be a smooth equidimensional finite type Artin stack over $k$ with affine diagonal and a good moduli space, and let $n \in \Z_{\geq 0}$. Then $\sJ_n(\cX)$ is smooth over $k$ and equidimension $(n+1)\dim\cX$.
\end{proposition}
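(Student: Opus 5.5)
We want to show that $\sJ_n(\cX)$ is smooth over $k$ and equidimensional of dimension $(n+1)\dim\cX$. The plan is to argue smoothness by infinitesimal lifting and then read off dimension from the stacky tangent spaces computed in \autoref{computingStackyTangentSpaceOfTwistedJetStack}. Work one $\ell$ at a time, since $\sJ_n(\cX)$ is the disjoint union of the $\sJ_n^\ell(\cX)$, and each of these is a finite type Artin stack with affine diagonal.

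\textbf{Smoothness.} We verify the infinitesimal lifting criterion for $\sJ_n^\ell(\cX) \to \Spec(k)$. Let $A' \to A$ be a square-zero extension of $k$-algebras with ideal $I$ (affine, so one may use the criterion for algebraic stacks). A point of $\sJ_n^\ell(\cX)(A)$ is a representable map $\varphi_A: \cD^\ell_{n,A} \to \cX$. The scheme $\cD^\ell_{n,A} \hookrightarrow \cD^\ell_{n,A'}$ is a square-zero thickening. The obstruction to extending $\varphi_A$ lies in
\[
	\Ext^1_{\cD^\ell_{n,A}}(L\varphi_A^*L_\cX, \cO_{\cD^\ell_{n,A}}\otimes_A I).
\]
Since $\cX$ is smooth, $L_\cX$ is perfect of amplitude $[0,1]$, so $L\varphi_A^*L_\cX$ has Tor-amplitude in $[0,1]$. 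Moreover $\cD^\ell_{n,A}$ is tame: it is the quotient of an affine scheme by the linearly reductive group $\mu_\ell$, so quasi-coherent cohomology vanishes in positive degrees. Hence this $\Ext^1$ is $H^0$ of $\mathscr{E}xt^1$ of a complex concentrated in degrees $[0,1]$ against a sheaf, which vanishes.

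More concretely, locally one writes $L\varphi_A^*L_\cX \simeq [\cF \to \cG]$ with $\cF,\cG$ locally free, as in the analogue of \cite[Lemma 2.2]{SatrianoUsatine6} used above. Then $\Ext^1$ is a cokernel of global sections of duals placed in degree $\le 0$, so it is zero.

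Any lift is automatically representable, because representability can be checked on the closed fiber. Hence lifts exist and $\sJ_n^\ell(\cX)$ is smooth over $k$. The main technical point is this obstruction vanishing for the stacky disc over a non-field base $A$. If necessary, reduce to $A$ local Artinian, where the tame-cohomology argument is cleanest. This uses that smoothness of a locally finite type stack can be tested on lifting along small extensions of Artin local rings.

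\textbf{Dimension.} For a smooth Artin stack $\cY$ and a point $y:\Spec(k')\to\cY$, the fiber $\sL_1(\cY)\times_\cY\Spec(k')$ is the tangent stack $[T^0/T^{-1}]$ at $y$. Its dimension $\dim T^0-\dim T^{-1}$ equals the local dimension of $\cY$ at $y$, because $\cY$ is smooth, so the cotangent complex is perfect of amplitude $[0,1]$ and its rank is the dimension. By \autoref{computingStackyTangentSpaceOfTwistedJetStack}, this fiber has dimension $(n+1)\dim\cX$ at every point. Thus every connected component of $\sJ_n(\cX)$ has dimension $(n+1)\dim\cX$, giving equidimensionality.
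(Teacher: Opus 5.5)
Your proof is correct and follows the paper's route. The dimension step (for a smooth stack, the fiber of $\sL_1$ over a point has dimension equal to the local dimension, which the paper cites as \cite[Lemma 3.34]{SatrianoUsatine1}, combined with \autoref{computingStackyTangentSpaceOfTwistedJetStack}) is exactly the paper's argument. For smoothness, the paper cites \cite[Remark 3.13(v)]{Rydh} for smoothness of $\uHom_k(\cD^\ell_n,\cX)$ and \cite[Proposition 4.2]{SatrianoUsatine6} for openness of $\sJ^\ell_n(\cX)$ inside it; your obstruction-vanishing argument (cohomological affineness of $\cD^\ell_{n,A}$ plus Tor-amplitude $[0,1]$ of $L\varphi_A^*L_\cX$, with representability of lifts checked on the unchanged underlying points) unpacks those two citations.
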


\begin{proof}
By \cite[Remark 3.13(v)]{Rydh}, the stack $\uHom_k(\cD^\ell_n, \cX)$ is smooth over $k$. Since $\sJ^\ell_n(\cX)$ is an open substack of $\uHom_k(\cD^\ell_n, \cX)$ by \cite[Proposition 4.2]{SatrianoUsatine6}, we have that $\sJ_n(\cX)$ is smooth over $k$. Now let $\ell \in \Z_{>0}$ and $\cY$ be a connected component of $\sJ^\ell_n(\cX)$. Then by \cite[Lemma 3.34]{SatrianoUsatine1}, every fiber of $\sL_1(\cY) \to \cY$ has dimension equal to $\dim\cY$. On the other hand by \autoref{computingStackyTangentSpaceOfTwistedJetStack}, these fibers all have dimension $(n+1)\dim\cX$, and we are done.
\end{proof}

In order to finish the proof of \autoref{theoremSmoothnessOfTruncation}, we will need the following version of miracle flatness. It should come as no surprise that miracle flatness holds as follows for Artin stacks, but nonetheless, we include a proof for sake of completeness.

\begin{proposition}[Miracle flatness for Artin stacks]\label{miracleFlatnessForStacks}
Let $\cY$ and $\cZ$ be equidimensional finite type Artin stacks over $k$, and let $\cY \to \cZ$ be a morphism. Assume that $\cY$ is Cohen--Macaulay and $\cZ$ is regular. If for every field extension $k'$ of $k$ and every morphism $\Spec(k') \to \cZ$ we have that $\dim(\cY \times_\cZ \Spec(k')) = \dim\cY - \dim\cZ$, then $\cY \to \cZ$ is flat.
\end{proposition}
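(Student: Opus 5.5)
The plan is to reduce to the scheme case by passing to smooth covers, and then apply classical miracle flatness in its local form: a local homomorphism $A \to B$ of Noetherian local rings with $A$ regular, $B$ Cohen--Macaulay, and $\dim B = \dim A + \dim B/\mathfrak{m}_A B$ is flat. Flatness is smooth-local on source and target. So first choose a smooth surjection $Z \to \cZ$ from a scheme $Z$ of constant relative dimension $a$. Then choose a smooth surjection $Y \to \cY \times_\cZ Z$ from a scheme of constant relative dimension $b$. Both covers can be taken with constant relative dimension after passing to suitable disjoint unions of opens, using equidimensionality. It then suffices to show that $Y \to Z$ is flat at every point.

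The properties on the stacks transfer to $Z$ and $Y$ as follows. Since $Z \to \cZ$ is smooth and $\cZ$ is regular, $Z$ is regular. The composite $Y \to \cY$ is smooth of relative dimension $a+b$, so $Y$ is Cohen--Macaulay. For dimensions, I will use that for a smooth morphism of constant relative dimension $e$ onto an equidimensional stack, the source is equidimensional of dimension $\dim(\text{target}) + e$. Thus $Z$ is equidimensional of dimension $\dim\cZ + a$ and $Y$ is equidimensional of dimension $\dim\cY + a + b$.

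Next I compute the fibers. Take a point $z \in Z$ with residue field $k'$. The fiber $Y_z$ is smooth of relative dimension $b$ over $(\cY\times_\cZ Z)_z = \cY \times_\cZ \Spec(k')$. By hypothesis the latter has dimension $\dim\cY - \dim\cZ$. Hence every nonempty fiber satisfies
\[
\dim Y_z = \dim\cY - \dim\cZ + b = \dim Y - \dim Z.
\]

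The main obstacle is turning this global dimension count into the local equality $\dim \cO_{Y,y} = \dim\cO_{Z,z} + \dim \cO_{Y_z,y}$ at closed points, and then spreading flatness to all points. At a closed point $y$ over a closed point $z$, the schemes are finite type over $k$ and equidimensional, and the fiber is equidimensional by the smooth-cover argument applied to the fiber stack. I would therefore argue:
\[
\dim\cO_{Y,y} = \dim Y, \qquad \dim \cO_{Z,z} = \dim Z, \qquad \dim\cO_{Y_z,y} = \dim Y_z.
\]
Here I would be careful that $Y$ and $Y_z$ are equidimensional locally at $y$, meaning every irreducible component through $y$ has the expected dimension. Then local miracle flatness gives flatness of $Y \to Z$ at all closed points of $Y$. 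The flat locus is open, and every point of a finite-type $k$-scheme specializes to a closed point, so $Y \to Z$ is flat everywhere. Descending along the smooth covers, $\cY \to \cZ$ is flat.
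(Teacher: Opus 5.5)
Your proposal follows the paper's proof. Both arguments pass to smooth covers $Z \to \cZ$ and $Y \to \cY\times_\cZ Z$ of constant relative dimension, transfer regularity, the Cohen--Macaulay property and equidimensionality, and get $\dim Y_z = \dim Y - \dim Z$ from the hypothesis on $\cY\times_\cZ\Spec(k')$. Both then apply miracle flatness for schemes and descend. The only difference is that the paper cites the global scheme version \cite[26.2.11]{Vakil}, whereas you derive it from the local criterion at closed points and then spread to all points. That is fine.

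One justification needs correcting. Your claim that $Y_z$ is equidimensional ``by the smooth-cover argument applied to the fiber stack'' is unsupported. The hypothesis only controls $\dim(\cY\times_\cZ\Spec(k'))$, not whether that fiber stack is equidimensional.

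You do not actually need equidimensionality of $Y_z$. For a closed point $y \mapsto z$, use the general inequality $\dim\cO_{Y,y} \le \dim\cO_{Z,z} + \dim\cO_{Y_z,y}$ together with $\dim\cO_{Y,y}=\dim Y$, $\dim\cO_{Z,z}=\dim Z$ and $\dim\cO_{Y_z,y}\le \dim Y_z = \dim Y - \dim Z$. These force $\dim\cO_{Y_z,y} = \dim Y - \dim Z$, which is exactly the equality the local miracle flatness criterion requires.
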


\begin{remark}
We call a stack \emph{Cohen--Macaulay} (resp. \emph{regular}) if it has a smooth cover by a scheme that is Cohen--Maucalay (resp. regular). Since being Cohen--Macaulay (resp. regular) is local in the smooth topology \cite[Tag 036A (resp. Tag 036D)]{stacks-project}, this is equivalent to all smooth covers by a scheme being Cohen--Macaulay (resp. regular).
\end{remark}

\begin{proof}
Let $Z \to \cZ$ be a finite type smooth cover of constant relative dimension with $Z$ a scheme, let $\cW = Z \times_\cZ \cY$, and let $W \to \cW$ be a finite type smooth cover of constant relative dimension with $W$ a scheme. Then $W$ and $Z$ are equidimensional finite type schemes over $k$, $W$ is Cohen--Macaulay, and $Z$ is regular. Furthermore, if $k'$ is a field extension of $k$ and $\Spec(k') \to Z$ is a morphism, then $W \times_Z \Spec(k')$ is a smooth cover of $\cW \times_Z \Spec(k') \cong \cY \times_\cZ \Spec(k')$ with the map $W \times_Z \Spec(k') \to \cW \times_Z \Spec(k')$ having constant relative dimension $\dim W - \dim\cW$. Thus
\begin{align*}
	\dim(W \times_Z \Spec(k')) &= \dim(\cY \times_\cZ \Spec(k')) + \dim W - \dim\cW\\
	&= \dim \cY - \dim \cZ + \dim W - \dim\cW \\
	&= \dim \cW - \dim Z + \dim W - \dim\cW \\
	&= \dim W - \dim Z.
\end{align*}
Therefore by miracle flatness for schemes, see e.g., \cite[26.2.11]{Vakil}, the map $W \to Z$ is flat. Since flatness can be checked smooth locally on the source, this implies that $\cW \to Z$ is flat. Then because flatness can be checked smooth locally on the target, this implies that $\cY \to \cZ$ is flat.
\end{proof}

We may now complete the proof of \autoref{theoremSmoothnessOfTruncation}.

\begin{proof}[Proof of \autoref{theoremSmoothnessOfTruncation}]
By \autoref{TwistedJetStacksAreSmoothAndEquidimensionalOfCorrectDimension}, each $\sJ_n(\cX)$ is smooth over $k$ and equidimension $(n+1)\dim\cX$. Therefore by \autoref{miracleFlatnessForStacks} we only need to show that for each $\ell$, the truncation map $\theta^{n+1}_n: \sJ_{n+1}^\ell(\cX) \to \sJ_n^\ell(\cX)$ has smooth fibers of dimension $\dim\cX$. The latter holds by \cite[Proposition 4.11]{SatrianoUsatine6}.
\end{proof}

\section{The compactly supported cohomology of a constructible subset}

We begin by recalling a method for canonically decomposing a constructible subset into locally closed subsets. In the case of schemes, this construction has appeared in e.g., \cite{Leykin}.

\begin{notation}\label{notationForCanonicalDecompostion}
Let $\cZ$ be a finite type Artin stack over $k$, and let $\cC \subset |\cZ|$ be a constructible subset. We will recursively define a sequence $\{\cY^\cC_i\}_{i \geq 0}$ of locally closed substacks of $\cZ$ as follows. First let $\cY^\cC_0$ be the unique reduced locally closed substack of $\cZ$ such that 
\[
	|\cY^\cC_0| = \overline{\cC} \setminus\left( \overline{\overline{\cC} \setminus \cC}\right).
\]
Then if we already have $\cY^\cC_0, \dots, \cY^\cC_r$, set $\cC_{r+1} = \cC \setminus \bigcup_{i=0}^r |\cY^\cC_i|$, and let $\cY^\cC_{r+1}$ be the unique reduced locally closed substack of $\cZ$ such that
\[
	|\cY^\cC_{r+1}| = \overline{\cC_{r+1}} \setminus\left( \overline{\overline{\cC_{r+1}} \setminus \cC_{r+1}}\right).
\]
Now having defined the sequence $\{\cY^\cC_i\}_{i \geq 0}$, we will let $\cY^\cC$ denote the disjoint union of the $\cY^\cC_i$.
\end{notation}

\begin{lemma}\label{canonicalDecompositionSmoothMap}
Let $\cW$ and $\cZ$ be finite type Artin stacks over $k$, let $\cC \subset |\cZ|$ be a constructible subset, let $\cW \to \cZ$ be a smooth morphism, and let $\cD$ be the preimage of $\cC$ in $|\cW|$. If $i \in \Z_{\geq 0}$, then $\cY^\cC_i \times_\cZ \cW$ and $\cY^\cD_i$ are isomorphic over $\cW$.
\end{lemma}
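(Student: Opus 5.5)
The plan is to show, by induction on $i$, that the constructions defining $\cY^\cC_i$ commute with pullback along the smooth morphism $f\colon \cW \to \cZ$. The key topological input is that a smooth morphism of finite type Artin stacks is flat and locally of finite presentation, hence universally open. Consequently, for any subset $S \subset |\cZ|$ we have
\[
	f^{-1}(\overline{S}) = \overline{f^{-1}(S)}.
\]
One inclusion is continuity. For the other, if $w \in |\cW|$ has $f(w) \notin \overline{S}$, take an open neighborhood $V$ of $w$; then $f(V)$ is an open neighborhood of $f(w)$ and meets $S$, so $V$ meets $f^{-1}(S)$. Running this argument with $w$ ranging over points in $\overline{f^{-1}(S)}$ gives $f(w) \in \overline{S}$. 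Preimages also commute with complements, unions and set differences, so the operation $S \mapsto \overline{S} \setminus \overline{\overline{S} \setminus S}$ commutes with $f^{-1}$.

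Next I will do the induction on supports. Let $\cC_0 = \cC$ and $\cD_0 = \cD$, and define $\cC_{r+1}$ and $\cD_{r+1}$ as in \autoref{notationForCanonicalDecompostion}. I claim that $\cD_r = f^{-1}(\cC_r)$ and $|\cY^\cD_r| = f^{-1}(|\cY^\cC_r|)$ for all $r$. The base case follows from the previous paragraph. For the inductive step, if the claim holds for all indices at most $r$, then
\[
	\cD_{r+1} = \cD \setminus \bigcup_{i \le r} |\cY^\cD_i| = f^{-1}\Bigl(\cC \setminus \bigcup_{i\le r}|\cY^\cC_i|\Bigr) = f^{-1}(\cC_{r+1}),
\]
and applying the previous paragraph again gives the support statement at index $r+1$. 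Since $|\cY^\cC_i \times_\cZ \cW| = f^{-1}(|\cY^\cC_i|)$, the two locally closed substacks of $\cW$ have the same support.

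Finally I will pass from supports to the stack structures. The substack $\cY^\cD_i$ is by definition the reduced locally closed substack with this support. The fiber product $\cY^\cC_i \times_\cZ \cW$ is a locally closed substack of $\cW$ with the same support, and it is reduced because $\cY^\cC_i$ is reduced and $\cY^\cC_i \times_\cZ \cW \to \cY^\cC_i$ is smooth; reducedness is smooth-local, and a scheme smooth over a reduced scheme is reduced. A reduced locally closed substack is determined by its support, so the two substacks agree as substacks of $\cW$, and in particular they are isomorphic over $\cW$.

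The only genuine point in this argument is the equality of closures, which rests on the openness of smooth morphisms of stacks. For that I would cite the fact that flat morphisms locally of finite presentation of algebraic stacks are universally open (Stacks Project), and everything else is formal.
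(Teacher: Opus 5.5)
Your proof is correct and follows the paper's argument: openness of $|\cW| \to |\cZ|$ makes preimage commute with closure, which identifies the supports, and $\cY^\cC_i \times_\cZ \cW$ is reduced because it is smooth over the reduced $\cY^\cC_i$; you also write out the induction on $i$ that the paper leaves implicit. One minor slip: in your closure argument the hypothesis should be $f(w) \in \overline{S}$, not $f(w) \notin \overline{S}$, and the last sentence of that paragraph just restates the continuity direction.
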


\begin{proof}
Because $\cW \to \cZ$ is smooth, the continuous map $|\cW| \to |\cZ|$ is open. Therefore taking preimage along this continuous map commutes with taking closure, so $|\cY^\cC_i \times_\cZ \cW| = |\cY^\cD_i|$ as subsets of $|\cW|$. The lemma then follows by the fact that $\cY^\cC_i \times_\cZ \cW$ is smooth over $\cY^\cC_i$ and therefore reduced.
\end{proof}

The following notation will be useful in what follows.

\begin{notation}
If $\cZ$ is a finite type Artin stack over $k$ with affine geometric stabilizers, and $\cC \subset |\cZ|$ is a constructible subset, we set
\[
	\dim\cC = \begin{cases} \log\Vert\e(\cC) \Vert, & \Vert\e(\cC) \Vert > 0 \\ -\infty, & \text{otherwise} \end{cases}.
\]
Note that by \autoref{RecallingNormAndDimension}, if $\{\cY_i\}_{i \geq 0}$ is a finite partition of $\cC$ into locally closed substacks of $\cZ$, then $\dim \cC = \dim \bigsqcup_{i \geq 0} \cY_i$. Furthermore, note that $\cC$ contains a dense open subset of $\overline{\cC}$, so $\dim \cC = \dim\overline{\cC}$.
\end{notation}

\begin{remark}
In the last sentence of the following proposition, the purpose of the assumption that $\cZ$ has affine geometric stabilizers is so that $\cY^\cC$ has affine geometric stabilizers and therefore has a well defined class in $\widehat{\sM}_k$.
\end{remark}

\begin{proposition}\label{propositionThatCanonicalDecompositionIsDecomposition}
Let $\cZ$ be a finite type Artin stack over $k$, and let $\cC \subset |\cZ|$ be a constructible subset. Then $\cY^\cC_{i} = \emptyset$ for all but finitely many $i$, and $\cC = \bigsqcup_{i \geq 0} |\cY^\cC_i|$. In particular, $\cY^\cC$ is finite type over $k$, and if $\cZ$ has affine geometric stabilizers, then $\e(\cC) = \e(\cY^\cC)$.
\end{proposition}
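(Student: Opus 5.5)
The plan is to reduce to the basic topological fact that a constructible subset contains a dense open subset of its closure, and then to use Noetherian induction on the closures $\overline{\cC_r}$. Throughout, $|\cZ|$ is a Noetherian topological space because $\cZ$ is finite type over $k$. Constructible subsets of $|\cZ|$ are closed under finite Boolean operations, and the closure of a constructible set is closed.

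First I will show that each $|\cY^\cC_r|$ is a locally closed subset of $|\cZ|$ contained in $\cC_r$, where $\cC_0 = \cC$. Set $S_r = \overline{\cC_r} \setminus \cC_r$. Then
\[
|\cY^\cC_r| = \overline{\cC_r} \setminus \overline{S_r}
\]
is the difference of two closed sets, hence locally closed. It lies in $\overline{\cC_r}$ and avoids $S_r$, so it lies in $\cC_r$. Consequently the subsets $|\cY^\cC_i|$ are pairwise disjoint, because $\cC_{r+1} = \cC_r \setminus |\cY^\cC_r|$. By induction each $\cC_r$ is constructible, being a difference of constructible sets.

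The key step is strict descent. I claim that if $\cC_r \neq \emptyset$, then $\overline{\cC_{r+1}} \subsetneq \overline{\cC_r}$. Since $\cC_r$ is constructible, it contains a dense open subset $U$ of $\overline{\cC_r}$. Then $S_r$ is a constructible set disjoint from $U$, so $\overline{S_r} \subset \overline{\cC_r} \setminus U$. This is a proper closed subset, and in fact it is nowhere dense in $\overline{\cC_r}$. Hence $|\cY^\cC_r| \supset U$ is nonempty and contains a dense open subset of every irreducible component of $\overline{\cC_r}$. It follows that $\cC_{r+1} \subset \overline{\cC_r} \setminus U$, a proper closed subset, so $\overline{\cC_{r+1}} \subsetneq \overline{\cC_r}$. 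Because $|\cZ|$ is Noetherian, descending chains of closed subsets stabilize. Therefore $\cC_r = \emptyset$ for some $r$, and then $\cY^\cC_i = \emptyset$ for all $i \geq r$, since the formula gives the empty set once $\cC_i$ is empty. This finiteness is the only step needing care, namely making sure that "dense open in the closure" really forces a proper decrease. Everything else is bookkeeping.

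Since $\cC_r = \emptyset$ and $\cC_{i+1} = \cC_i \setminus |\cY^\cC_i|$, we get $\cC = \bigsqcup_i |\cY^\cC_i|$. Then $\cY^\cC$ is a finite disjoint union of reduced locally closed substacks of a finite type stack, so it is finite type over $k$. If $\cZ$ has affine geometric stabilizers, so does each $\cY^\cC_i$. By the definition of $\e(\cC)$ for constructible sets, via any finite partition into locally closed substacks (well defined by the scissor relations in $K_0(\Stack_k)$), we obtain
\[
\e(\cC) = \sum_i \e(\cY^\cC_i) = \e(\cY^\cC).
\]
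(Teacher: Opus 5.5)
Your proof is correct, but it takes a different route from the paper's.

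\textbf{The paper's route.} It first invokes \autoref{canonicalDecompositionSmoothMap}, the compatibility of the canonical decomposition with smooth base change, to reduce to the case where $\cZ$ is a scheme. It then inducts on dimension. It checks $|\cY^\cC_0|\subset\cC$, and for $\cC\neq\emptyset$ it shows
\[
\dim\bigl(\cC\setminus|\cY^\cC_0|\bigr)\le\dim\bigl(\overline{\overline{\cC}\setminus\cC}\bigr)<\dim\overline{\cC}=\dim\cC,
\]
using that $\cC$ is dense constructible in $\overline{\cC}$.

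\textbf{Your route.} You never leave $|\cZ|$ and use only that it is a Noetherian topological space. The dense open $U\subset\cC_r$ of $\overline{\cC_r}$ lies in $|\cY^\cC_r|$, which gives the strict descent $\overline{\cC_{r+1}}\subset\overline{\cC_r}\setminus U\subsetneq\overline{\cC_r}$. The descending chain condition then finishes the argument.

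\textbf{Comparison.} Both arguments rest on the same input: a constructible set contains a dense open subset of its closure. Your version is purely topological. It needs neither the smooth-cover reduction nor any notion of dimension. This matters because the paper's $\dim$ of a constructible set is defined via $\Vert\e(\cdot)\Vert$, which requires affine geometric stabilizers, whereas the proposition does not assume them; the reduction to a scheme is what makes the dimension argument available there. The paper's version, in exchange, makes the explicit bound of at most $\dim\cC+1$ nonempty pieces visible. It also matches the dimension estimates used right afterwards in \autoref{cohomologyOfConstructibleSetsWithSmallDifference}. Your nowhere-density of $\overline{\cC_{r+1}}$ in $\overline{\cC_r}$ would yield the same bound if you wanted it.
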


\begin{proof}
By \autoref{canonicalDecompositionSmoothMap}, we may assume that $\cZ$ is a scheme. By noetherian induction, we only need to show that $\cY^\cC_0 \subset \cC$ and that if $\cC \neq \emptyset$, then $\dim(\cC \setminus \cY^\cC_0) < \dim \cC$. The former is immediate from the construction, and the latter follows from the fact that $(\cC \setminus \cY^\cC_0) \subset \overline{\overline{\cC} \setminus \cC}$ and the fact that if $\cC \neq \emptyset$, then 
\[
	\dim\left(\overline{\overline{\cC} \setminus \cC}\right) = \dim\left( \overline{\cC} \setminus \cC \right) < \dim \overline{\cC} = \dim \cC,
\]
 where the above inequality is due to the fact that $\cC$ is a dense constructible subset of the closed subscheme $\overline{\cC}$ of $\cZ$.
\end{proof}

\begin{lemma}\label{lemmaConstructibleInclusionCanonicalSymmetricDifference}
Let $\cZ$ be a finite type Artin stack over $k$, and let $\cC \subset \cD \subset |\cZ|$ be constructible subsets. Then $|\cY^\cC_0| \setminus |\cY^\cD_0| \subset \overline{\cD \setminus \cC}$ and $|\cY^\cD_0| \setminus |\cY^\cC_0| \subset \overline{\cD \setminus \cC}$.
\end{lemma}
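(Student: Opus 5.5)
The statement is purely point-set topological: it uses only that $|\cY^\cC_0| = \overline{\cC} \setminus \overline{\overline{\cC} \setminus \cC}$, and likewise for $\cD$. So I would argue directly in the topological space $|\cZ|$ and never use the stack structure. Two facts drive the argument. First, $\overline{\cD} = \overline{\cC} \cup \overline{\cD \setminus \cC}$, since closure commutes with finite unions. Second, a point every neighborhood of which meets $\overline{\cD\setminus\cC}$ lies in $\overline{\cD\setminus\cC}$.

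For the first inclusion, let $x \in |\cY^\cC_0| \setminus |\cY^\cD_0|$ and set $U = |\cZ| \setminus \overline{\overline{\cC}\setminus\cC}$. Then $U$ is an open neighborhood of $x$ with $U \cap \overline{\cC} \subset \cC$. Since $x \in \cC \subset \cD \subset \overline{\cD}$ but $x \notin |\cY^\cD_0|$, we get $x \in \overline{\overline{\cD}\setminus\cD}$. Hence every open neighborhood $V \subset U$ of $x$ contains a point $y \in \overline{\cD}\setminus\cD$. Such a $y$ cannot lie in $\overline{\cC}$, because otherwise $y \in U \cap \overline{\cC} \subset \cC \subset \cD$. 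Therefore $y \in \overline{\cD\setminus\cC}$ by the first fact. So every neighborhood of $x$ meets $\overline{\cD\setminus\cC}$, and the second fact gives $x \in \overline{\cD\setminus\cC}$.

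For the second inclusion, let $x \in |\cY^\cD_0| \setminus |\cY^\cC_0|$. Since $|\cY^\cD_0| \subset \cD$ (as noted in the proof of \autoref{propositionThatCanonicalDecompositionIsDecomposition}), if $x \notin \cC$ then $x \in \cD \setminus \cC$ and we are done. Otherwise $x \in \cC \subset \overline{\cC}$ and $x \notin |\cY^\cC_0|$, so $x \in \overline{\overline{\cC}\setminus\cC}$. Let $V = |\cZ| \setminus \overline{\overline{\cD}\setminus\cD}$; this is an open neighborhood of $x$ with $V \cap \overline{\cD} \subset \cD$. Every open neighborhood $W \subset V$ of $x$ contains some $y \in \overline{\cC}\setminus\cC$. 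Such a $y$ lies in $\overline{\cD} \cap V \subset \cD$ and not in $\cC$, so $y \in \cD\setminus\cC$. Hence $x \in \overline{\cD\setminus\cC}$.

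There is no real obstacle here. The only point needing care is to pick the right auxiliary open set in each case: the complement of the "boundary" $\overline{\overline{\cC}\setminus\cC}$ for the first inclusion, and of $\overline{\overline{\cD}\setminus\cD}$ for the second. One also has to check that closures in $|\cZ|$ behave as in any topological space, which they do since we never leave $|\cZ|$.
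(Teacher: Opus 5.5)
Your proof is correct and follows the paper's own argument: in each case you take an open set cutting $\overline{\cC}$ (resp.\ $\overline{\cD}$) down to $|\cY^\cC_0|$ (resp.\ $|\cY^\cD_0|$). You then observe that nearby points of $\overline{\cD}\setminus\cD$ must avoid $\overline{\cC}$, hence lie in $\overline{\cD\setminus\cC}$, and that nearby points of $\overline{\cC}\setminus\cC$ must lie in $\cD\setminus\cC$. The only differences are cosmetic: you treat the two inclusions in the opposite order, and you name the open sets explicitly as the complements of $\overline{\overline{\cC}\setminus\cC}$ and $\overline{\overline{\cD}\setminus\cD}$ rather than asserting their existence.
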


\begin{proof}
Set $\cC_0 = |\cY^\cC_0|$ and $\cD_0 = |\cY^\cD_0|$. Throughout this proof we will use that by \autoref{propositionThatCanonicalDecompositionIsDecomposition}, we have $\cC_0 \subset \cC$ and $\cD_0 \subset \cD$.

We will first show that $\cD_0 \setminus \cC_0 \subset \overline{\cD \setminus \cC}$. Since $\cD_0 \setminus \cC \subset \cD \setminus \cC \subset \overline{\cD \setminus \cC}$ and $(\cD_0 \setminus \cC_0) \setminus (\cD_0 \setminus \cC) = (\cC \setminus \cC_0) \cap \cD_0$, we only need to show that $(\cC \setminus \cC_0) \cap \cD_0 \subset \overline{\cD \setminus \cC}$. Thus let $x \in (\cC \setminus \cC_0) \cap \cD_0$, and let $U$ be an open neighborhood of $x$ in $|\cZ|$. We only need to show that $U \cap (\cD \setminus \cC) \neq \emptyset$. Recall that by definition of $\cY^\cD_0$, we have $\cD_0 = \overline{\cD} \setminus\left( \overline{\overline{\cD} \setminus \cD}\right)$, so $\cD_0$ is open in $\overline{\cD}$. Thus $U \cap \cD_0$ is open in $\overline{\cD}$, so there exists an open subset $U'$ of $|\cZ|$ such that $U' \cap \overline{\cD} = U \cap \cD_0$. Then since $x \in U \cap \cD_0$, the set $U'$ is an open neighborhood of $x$ in $|\cZ|$. Then since $x \in \cC \setminus \cC_0 \subset \overline{\cC} \setminus \cC_0 \subset \overline{\overline{\cC} \setminus \cC}$, where the latter inclusion is due to the fact that $\cC_0 = \overline{\cC} \setminus\left( \overline{\overline{\cC} \setminus \cC}\right)$, we have that $U' \cap (\overline{\cC} \setminus \cC) \neq \emptyset$. On the other hand
\begin{align*}
	U' \cap (\overline{\cC} \setminus \cC) &\subset U' \cap (\overline{\cD} \setminus \cC)\\
	&= (U' \cap \overline{\cD}) \cap (\overline{\cD} \setminus \cC)\\
	&= (U \cap \cD_0) \cap (\overline{\cD} \setminus \cC)\\
	&= U \cap (\cD_0 \setminus \cC)\\
	&\subset U \cap (\cD \setminus \cC),
\end{align*}
so $U \cap (\cD \setminus \cC) \neq \emptyset$, and we have finished showing that $\cD_0 \setminus \cC_0 \subset \overline{\cD \setminus \cC}$.

We will next show that $\cC_0 \setminus \cD_0 \subset \overline{\cD \setminus \cC}$. Since $\cC_0 = \overline{\cC} \setminus\left( \overline{\overline{\cC} \setminus \cC}\right)$, we have that $\cC_0$ is open in $\overline{\cC}$, so there exists an open subset $V'$ of $|\cZ|$ such that $V' \cap \overline{\cC} = \cC_0$. We will use later that $V' \setminus \overline{\cC} = V' \setminus (V' \cap \overline{\cC}) = V' \setminus \cC_0 \supset V' \setminus \cC$. Let $y \in \cC_0 \setminus \cD_0$, and let $V$ be an open neighborhood of $y$ in $|\cZ|$. We only need to show that $V \cap (\overline{\cD \setminus \cC})$ is nonempty. Since $V \cap V'$ is an open neighborhood of $y$, and $y \in \cC \setminus \cD_0 \subset \cD \setminus \cD_0 \subset \overline{\cD} \setminus \cD_0 \subset \overline{\overline{\cD} \setminus \cD}$, where the last inclusion is due to the fact that $\cD_0 = \overline{\cD} \setminus\left( \overline{\overline{\cD} \setminus \cD}\right)$, we have that $V \cap V' \cap (\overline{\cD} \setminus \cD)$ is nonempty. On the other hand
\begin{align*}
	V \cap V' \cap (\overline{\cD} \setminus \cD) &\subset V \cap (V' \setminus \cC) \cap \overline{\cD}\\
	&\subset V \cap (V' \setminus \overline{\cC}) \cap \overline{\cD}\\
	&= V \cap V' \cap (\overline{\cD} \setminus \overline{\cC})\\
	&\subset V \cap (\overline{\cD} \setminus \overline{\cC})\\
	&\subset V \cap (\overline{\cD \setminus \cC}),
\end{align*}
so $V \cap (\overline{\cD \setminus \cC})$ is nonempty, and we have finished showing that $\cC_0 \setminus \cD_0 \subset \overline{\cD \setminus \cC}$.
\end{proof}

\begin{proposition}\label{differenceOfCanonicalDecompositionPieces}
Let $\cZ$ be a finite type Artin stack over $k$, and let $\cC, \cD \subset |\cZ|$ be constructible subsets. Then for all $i \in \Z_{\geq 0}$, there exists an Artin stack $\cY_i$ such that $\cY_i$ is both a locally closed substack of $\cY^\cC_i$ and a locally closed substack of $\cY^\cD_i$ and $|\cY^\cC_i| \setminus |\cY_i| \subset (\overline{\cC \setminus \cD}) \cup (\overline{\cD \setminus \cC})$ and $|\cY^\cD_i| \setminus |\cY_i| \subset (\overline{\cC \setminus \cD}) \cup (\overline{\cD \setminus \cC})$.
\end{proposition}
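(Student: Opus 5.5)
We prove the statement by induction on $i$, with the symmetric-difference set $\cE = (\overline{\cC \setminus \cD}) \cup (\overline{\cD \setminus \cC})$ as the basic control. The set $\cE$ is closed, so the problem reduces to showing that the locally closed sets $|\cY^\cC_i|$ and $|\cY^\cD_i|$ agree outside $\cE$. Indeed, suppose we know that
\[
	|\cY^\cC_i| \setminus \cE = |\cY^\cD_i| \setminus \cE.
\]
Then we let $\cY_i$ be the reduced locally closed substack of $\cZ$ supported on this common set. It is an open substack of the reduced stack $\cY^\cC_i$, being the complement of the closed set $\cE \cap |\cY^\cC_i|$. For the same reason it is an open substack of $\cY^\cD_i$. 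The two required inclusions $|\cY^\cC_i| \setminus |\cY_i| \subset \cE$ and $|\cY^\cD_i| \setminus |\cY_i| \subset \cE$ are then immediate. So the key claim is the displayed equality of sets for all $i$.

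To prove the claim, we first reduce to the nested case using \autoref{lemmaConstructibleInclusionCanonicalSymmetricDifference}. Set $\cB = \cC \cap \cD$, which is constructible. The lemma applied to $\cB \subset \cC$ shows that $|\cY^\cB_0|$ and $|\cY^\cC_0|$ agree outside $\overline{\cC \setminus \cB} = \overline{\cC\setminus\cD} \subset \cE$. Applied to $\cB \subset \cD$, it shows that $|\cY^\cB_0|$ and $|\cY^\cD_0|$ agree outside $\overline{\cD\setminus\cC} \subset \cE$. Hence $|\cY^\cC_0|$ and $|\cY^\cD_0|$ agree outside $\cE$, which is the case $i = 0$.

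For the inductive step we pass to the residual sets. Write $\cC_{r}$ and $\cD_r$ for the residual sets from \autoref{notationForCanonicalDecompostion}, with $\cC_0 = \cC$ and $\cD_0 = \cD$. Then $\cY^\cC_{r} = \cY^{\cC_r}_0$ and likewise $\cY^\cD_{r} = \cY^{\cD_r}_0$. We will show by induction that
\[
	\cC_r \setminus \cD_r \subset \cE \quad \text{and} \quad \cD_r \setminus \cC_r \subset \cE.
\]
Granting this, $\overline{\cC_r \setminus \cD_r} \cup \overline{\cD_r\setminus\cC_r} \subset \cE$ because $\cE$ is closed. The $i=0$ argument applied to the pair $(\cC_r, \cD_r)$ then gives agreement of $|\cY^\cC_r|$ and $|\cY^\cD_r|$ outside $\cE$. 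To carry out the induction, take $x \in \cC_{r+1} \setminus \cD_{r+1}$ with $x \notin \cE$. Since $x \notin \cE$, we have $x \in \cC\cap \cD$, so $x \notin \cD_{r+1}$ means $x \in |\cY^\cD_j|$ for some $j \le r$. The $j$-th step of the induction shows that $|\cY^\cD_j|$ and $|\cY^\cC_j|$ agree off $\cE$, so $x \in |\cY^\cC_j|$. This contradicts $x \in \cC_{r+1}$. The other inclusion is symmetric.

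The main obstacle is bookkeeping rather than geometry. The content is entirely in \autoref{lemmaConstructibleInclusionCanonicalSymmetricDifference}, and the only care needed is to verify that the symmetric-difference closures for $(\cC_r, \cD_r)$ stay inside the fixed closed set $\cE$. This is exactly what the set-theoretic induction above provides.
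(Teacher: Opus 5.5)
Your proof is correct and follows essentially the same route as the paper. Both handle $i=0$ via \autoref{lemmaConstructibleInclusionCanonicalSymmetricDifference} applied to $\cC \cap \cD \subset \cC$ and $\cC \cap \cD \subset \cD$, and then induct by showing that the residual sets have symmetric difference inside $(\overline{\cC \setminus \cD}) \cup (\overline{\cD \setminus \cC})$. The only differences are cosmetic: the paper takes $|\cY_i| = |\cY^\cC_i| \cap |\cY^\cD_i|$ rather than the complement of that closed set, and it inducts by replacing the pair $(\cC, \cD)$ with $(\cC \setminus |\cY^\cC_0|, \cD \setminus |\cY^\cD_0|)$ rather than tracking all residual sets at once.
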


\begin{proof}

Set $\cC_0 = |\cY^\cC_0|$ and $\cD_0 = |\cY^\cD_0|$. Throughout this proof we will use that by \autoref{propositionThatCanonicalDecompositionIsDecomposition}, we have $\cC_0 \subset \cC$ and $\cD_0 \subset \cD$.

We will first show that $(\cC_0 \setminus \cD_0) \cup (\cD_0 \setminus \cC_0) \subset (\overline{\cC \setminus \cD}) \cup (\overline{\cD \setminus \cC})$. Set $\cE = \cC \cap \cD$ and $\cE_0 = |\cY^\cE_0|$. Then by \autoref{lemmaConstructibleInclusionCanonicalSymmetricDifference}, we have $(\cE_0 \setminus \cC_0) \cup (\cC_0 \setminus \cE_0) \subset \overline{\cC \setminus \cE}$ and $(\cE_0 \setminus \cD_0) \cup (\cD_0 \setminus \cE_0) \subset \overline{\cD \setminus \cE}$. Therefore
\begin{align*}
	\cC_0 \setminus \cD_0 &\subset (\cC_0 \setminus \cE_0) \cup (\cE_0 \setminus \cD_0)\\
	&\subset (\overline{\cC \setminus \cE}) \cup (\overline{\cD \setminus \cE})\\
	&= (\overline{\cC \setminus \cD}) \cup (\overline{\cD \setminus \cC}),
\end{align*}
and similarly $\cD_0 \setminus \cC_0 \subset (\overline{\cC \setminus \cD}) \cup (\overline{\cD \setminus \cC})$, as desired.

Now since $\cC_0 \cap \cD_0$ is a locally closed subset of $|\cZ|$, we may let $\cY_0$ be the reduced locally closed substack of $\cZ$ such that $|\cY_0| = \cC_0 \cap \cD_0$. Then $\cY_0$ is both a locally closed substack of $\cY^\cC_0$ and a locally closed substack of $\cY^\cD_0$ and $|\cY^\cC_0| \setminus |\cY_0| = \cC_0 \setminus \cD_0 \subset (\overline{\cC \setminus \cD}) \cup (\overline{\cD \setminus \cC})$ and $|\cY^\cD_0| \setminus |\cY_0| = \cD_0 \setminus \cC_0 \subset (\overline{\cC \setminus \cD}) \cup (\overline{\cD \setminus \cC})$.

Now set $\cC' = \cC \setminus \cC_0$ and $\cD' = \cD \setminus \cD_0$. By induction, we only need to show that $(\overline{\cC' \setminus \cD'}) \cup (\overline{\cD' \setminus \cC'}) \subset (\overline{\cC \setminus \cD}) \cup (\overline{\cD \setminus \cC})$. By symmetry, we only need to verify that $\cC' \setminus \cD' \subset (\overline{\cC \setminus \cD}) \cup (\overline{\cD \setminus \cC})$. Now let $x \in \cC' \setminus \cD'$. Either $x \notin \cD$ or $x \in \cD$. In the first case, $x \in \cC' \setminus \cD \subset \cC \setminus \cD \subset (\overline{\cC \setminus \cD}) \cup (\overline{\cD \setminus \cC})$. In the second case, $x \in \cD \setminus \cD' = \cD_0$, so $x \in \cC' \cap \cD_0 = (\cC \setminus \cC_0) \cap \cD_0 \subset (\cD_0 \setminus \cC_0) \subset (\overline{\cC \setminus \cD}) \cup (\overline{\cD \setminus \cC})$, and we are done.
\end{proof}

\begin{corollary}\label{cohomologyOfConstructibleSetsWithSmallDifference}
Let $\cZ$ be a finite type Artin stack over $\C$ with affine geometric stabilizers, and let $\cC \subset \cD \subset |\cZ|$ be constructible subsets. If $i > 1 + 2\dim(\cD \setminus \cC)$, then
\[
	H^i_c(\cY^\cC) \cong H^i_c(\cY^\cD).
\]
\end{corollary}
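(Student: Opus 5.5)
We prove the statement componentwise, piece by piece, using \autoref{differenceOfCanonicalDecompositionPieces}. Set $\cE = \overline{\cD \setminus \cC}$. Since $\cC \subset \cD$, we have $\cC \setminus \cD = \emptyset$, so \autoref{differenceOfCanonicalDecompositionPieces} gives, for each $j \geq 0$, a stack $\cY_j$ that is locally closed in both $\cY^\cC_j$ and $\cY^\cD_j$, with $|\cY^\cC_j| \setminus |\cY_j| \subset \cE$ and $|\cY^\cD_j| \setminus |\cY_j| \subset \cE$. By \autoref{propositionThatCanonicalDecompositionIsDecomposition}, only finitely many $j$ contribute. Since $H^i_c$ of a finite disjoint union is the direct sum (apply \autoref{excisionExactSequence} to a clopen decomposition), it suffices to show that $H^i_c(\cY_j) \cong H^i_c(\cY^\cC_j)$ and $H^i_c(\cY_j) \cong H^i_c(\cY^\cD_j)$ for $i > 1 + 2\dim(\cD\setminus\cC)$.

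Fix $j$ and write $\cY' = \cY^\cC_j$ (the argument for $\cY^\cD_j$ is identical). Because $\cY_j$ is a locally closed substack of $\cY'$ and both are reduced, we first pass through an open step and then a closed step. Concretely, let $\cV \subset \cY'$ be an open substack in which $\cY_j$ is closed (for instance the complement of $\overline{|\cY_j|}\setminus|\cY_j|$). Then $|\cY'| \setminus |\cV|$ and $|\cV| \setminus |\cY_j|$ both lie in $|\cY'| \setminus |\cY_j| \subset \cE$. Apply \autoref{excisionExactSequence} twice, once to the pair $(\cV \subset \cY')$ with closed complement $\cZ_1$, and once to the pair $(\cY_j \subset \cV)$ with open complement $\cU_2$. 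Each of $\cZ_1$ and $\cU_2$ is a finite type stack whose support lies in $\cE$. By \autoref{RecallingNormAndDimension} and \autoref{containedConstructiblesBound}, its dimension is at most $\dim \cE = \dim(\cD\setminus\cC)$, using that a constructible set and its closure have the same dimension, as noted after \autoref{notationForCanonicalDecompostion}.

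By \autoref{cohomologyVanishingAboveTwiceDimension}, $H^{m}_c(\cZ_1) = H^m_c(\cU_2) = 0$ whenever $m > 2\dim(\cD\setminus\cC)$. In the first long exact sequence,
\[
	H^{i-1}_c(\cZ_1) \to H^i_c(\cV) \to H^i_c(\cY') \to H^i_c(\cZ_1),
\]
both outer terms vanish once $i - 1 > 2\dim(\cD\setminus\cC)$, which is precisely our hypothesis. In the second sequence,
\[
	H^i_c(\cU_2) \to H^i_c(\cV) \to H^i_c(\cY_j) \to H^{i+1}_c(\cU_2),
\]
the outer terms vanish for $i > 2\dim(\cD\setminus\cC)$. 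This gives isomorphisms of mixed Hodge structures, since the maps in \autoref{excisionExactSequence} come from the six-functor formalism. Composing them yields $H^i_c(\cY^\cC_j) \cong H^i_c(\cY_j) \cong H^i_c(\cY^\cD_j)$.

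The main point requiring care is the dimension bound on the boundary pieces. We must make sure that subsets of $\cE$ which are only locally closed in $\cY'$ still have dimension at most $\dim\cE$ in the sense of stacks. This follows from \autoref{containedConstructiblesBound} applied in $\cZ$, because all the pieces involved are constructible subsets of $|\cZ|$ contained in $\cE$. If $\cD \setminus \cC = \emptyset$, then $\dim = -\infty$ and the pieces coincide outright.
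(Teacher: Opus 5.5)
Your proof is correct and is essentially the paper's argument. You compare $\cY^\cC_j$ and $\cY^\cD_j$ piece by piece through the common locally closed substack $\cY_j$ from \autoref{differenceOfCanonicalDecompositionPieces}, bound the boundary pieces by $\dim(\cD\setminus\cC)$ via \autoref{containedConstructiblesBound}, and kill them using two excision sequences together with \autoref{cohomologyVanishingAboveTwiceDimension}; the only cosmetic difference is that you factor $\cY_j \hookrightarrow \cY^\cC_j$ as a closed immersion into an open substack, whereas the paper uses an open immersion into a closed substack, and the hypothesis $i > 1 + 2\dim(\cD\setminus\cC)$ handles either order.
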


\begin{proof}
It is sufficient to show that for all $j \in \Z_{\geq 0}$, we have $H_c^i(\cY^\cC_j) \cong H_c^i(\cY^\cD_j)$. By \autoref{differenceOfCanonicalDecompositionPieces}, there exists some $\cY_j$ that is a locally closed substack of both $\cY_j^\cC$ and $\cY^\cD_j$ and $|\cY^\cC_j| \setminus |\cY_j| \subset (\overline{\cD \setminus \cC})$ and $|\cY^\cD_j| \setminus |\cY_j| \subset (\overline{\cD \setminus \cC})$. Thus by \autoref{containedConstructiblesBound},
\[
	\Vert \e(|\cY^\cC_j| \setminus |\cY_j|) \Vert \leq \Vert \e(\overline{\cD \setminus \cC}) \Vert,
\]
so
\[
	\dim(|\cY^\cC_j| \setminus |\cY_j|) \leq \dim(\overline{\cD \setminus \cC}) = \dim(\cD \setminus \cC).
\]
Similarly,
\[
	\dim(|\cY^\cD_j| \setminus |\cY_j|) \leq \dim(\cD \setminus \cC).
\]
Since it is sufficient to prove that $H_c^i(\cY^\cC_j) \cong H_c^i(\cY_j)$ and $H_c^i(\cY^\cD_j) \cong H_c^i(\cY_j)$, the corollary follows from the next lemma.
\end{proof}

\begin{lemma}
Let $\cZ$ be a finite type Artin stack over $\C$ with affine geometric stabilizers, and let $\cW$ be a locally closed substack of $\cZ$. If $i > 1+ 2\dim(|\cZ| \setminus |\cW|)$, then $H^i_c(\cZ) \cong H^i_c(\cW)$.
\end{lemma}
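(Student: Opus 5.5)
We reduce in two steps to the excision sequence of \autoref{excisionExactSequence} combined with the vanishing bound of \autoref{cohomologyVanishingAboveTwiceDimension}. Since $\cW$ is a locally closed substack of $\cZ$, we may write $\cW$ as an open substack of a closed substack $\cV$ of $\cZ$; concretely, take $\cV$ to be the reduced closure of $\cW$ in $\cZ$ (or any closed substack in which $\cW$ is open). Then $|\cZ| \setminus |\cV|$ and $|\cV| \setminus |\cW|$ are both contained in $|\cZ| \setminus |\cW|$. Setting $d = \dim(|\cZ| \setminus |\cW|)$, each of these constructible sets therefore has dimension at most $d$, by \autoref{containedConstructiblesBound}. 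For a reduced locally closed substack, the dimension in the sense of our notation agrees with the stack dimension by \autoref{RecallingNormAndDimension}.

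First we compare $\cZ$ and $\cV$. Let $\cU = \cZ \setminus \cV$, the open complement, given its reduced structure if desired; compactly supported cohomology is insensitive to nilpotents, or we may simply use the open substack structure. \autoref{excisionExactSequence} gives the exact sequence
\[
	H^i_c(\cU) \to H^i_c(\cZ) \to H^i_c(\cV) \to H^{i+1}_c(\cU).
\]
Since $\dim\cU \leq d$ and $i > 1 + 2d$, both $i$ and $i+1$ exceed $2\dim\cU$. Hence both outer terms vanish by \autoref{cohomologyVanishingAboveTwiceDimension}, and $H^i_c(\cZ) \cong H^i_c(\cV)$.

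Next we compare $\cV$ and $\cW$. Let $\cT$ be the closed complement of $\cW$ in $\cV$, with reduced structure. The excision sequence for $\cW \subset \cV$ reads
\[
	H^{i-1}_c(\cT) \to H^i_c(\cW) \to H^i_c(\cV) \to H^i_c(\cT).
\]
Now $\dim\cT \leq d$, and $i - 1 > 2d \geq 2\dim\cT$, so both $H^{i-1}_c(\cT)$ and $H^i_c(\cT)$ vanish. This is precisely where the extra $1$ in the hypothesis is needed. Hence $H^i_c(\cW) \cong H^i_c(\cV) \cong H^i_c(\cZ)$. These isomorphisms are maps in the long exact sequence of mixed Hodge structures, so they are isomorphisms of mixed Hodge structures.

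The only point requiring care is the bookkeeping of dimensions. The hypothesis is phrased via $\dim$ of a constructible set, defined through $\Vert\e(\cdot)\Vert$. We must check that a reduced locally closed substack supported on a subset of $|\cZ|\setminus|\cW|$ has stack dimension at most $d$. This follows from \autoref{containedConstructiblesBound} and \autoref{RecallingNormAndDimension}, which is where the affine geometric stabilizer hypothesis is used. The empty cases, where $d = -\infty$, are trivial.
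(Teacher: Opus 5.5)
Your proof is correct and is essentially the paper's argument: factor $\cW \hookrightarrow \cV \hookrightarrow \cZ$ as an open immersion followed by a closed one, apply the excision sequence twice, and kill the outer terms using \autoref{containedConstructiblesBound} and \autoref{cohomologyVanishingAboveTwiceDimension}; the paper just handles $\cW \subset \cV$ before $\cV \subset \cZ$. One small caution: if $\cW$ is non-reduced it need not be open in the reduced closure, so use the general open-then-closed factorization you mention parenthetically, or note via excision that $H^*_c$ is insensitive to nilpotents.
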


\begin{proof}
The locally closed immersion $\cW \hookrightarrow \cZ$ factors as an open immersion $\cW \hookrightarrow \cV$ followed by a closed immersion $\cV \hookrightarrow \cZ$. 

We will first show that $H_c^i(\cW) \cong H_c^i(\cV)$. Let $\cY$ be a closed subscheme of $\cV$ supported on $|\cV| \setminus |\cW|$. Then we have an exact sequence as follows.
\[
	H_c^{i-1}(\cY) \to H_c^i(\cW) \to H_c^i(\cV) \to H_c^i(\cY).
\]
We also have that
\[
	\dim(\cY) = \dim(|\cY|) = \dim(|\cV| \setminus |\cW|) \leq \dim(|\cZ| \setminus |\cW|),
\]
where the inequality is by \autoref{containedConstructiblesBound}. Therefore $i - 1 > 2\dim(\cY)$, so $H_c^{i-1}(\cY) \cong H_c^i(\cY) \cong 0$. Thus $H_c^i(\cW) \cong H_c^i(\cV)$.

We now only need to prove that $H_c^i(\cZ) \cong H_c^i(\cV)$. Let $\cU$ be the open substack of $\cZ$ supported on $|\cZ| \setminus |\cV|$. Then we have an exact sequence as follows.
\[
	H_c^i(\cU) \to H_c^i(\cZ) \to H_c^i(\cV) \to H_c^{i+1}(\cU).
\]
We also have that
\[
	\dim(\cU) = \dim(|\cU|) = \dim(|\cZ| \setminus |\cV|) \leq \dim(|\cZ| \setminus |\cW|),
\]
where again the inequality is by \autoref{containedConstructiblesBound}. Thus $i > 2\dim(\cU)$, so $H_c^i(\cU) \cong H_c^{i+1}(\cU) \cong 0$. Therefore $H_c^i(\cZ) \cong H_c^i(\cV)$, and we are done.
\end{proof}

\section{The co-unit map of a stacky affine space fibration}

The main goal of this section is to prove the following.

\begin{theorem}\label{theoremGysinIsIsomorphism}
Let $\cY$ and $\cZ$ be finite type Artin stacks over $\C$, and let $f: \cY \to \cZ$ be a smooth morphism of constant relative dimension. Furthermore assume that for every field extension $k'$ of $\C$ and every morphism $\Spec(k') \to \cZ$, there exists $r,s \in \Z_{\geq 0}$ such that
\[
	\cY \times_\cZ \Spec(k') \cong \bA^r_{k'} \times_{k'} B\bG_{a,k'}^s
\]
as stacks over $k'$. Then the co-unit map $f_!f^!\Q_\cZ \to \Q_\cZ$ is an isomorphism. In particular, if $d$ is the relative dimension of $f$, then
\[
	H_c^{i+2d}(\cY)(d) \cong H_c^i(\cZ)
\]
for all $i \in \Z$.
\end{theorem}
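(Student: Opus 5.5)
Since $f$ is smooth of constant relative dimension $d$, purity \cite[Theorem 3.1(7)]{Tubach2} gives $f^!\Q_\cZ \cong \Q_\cY(d)[2d]$. The ``in particular'' statement therefore follows from the first by applying $s_!$, where $s:\cZ\to\Spec(\C)$ is the structure map, and taking $H^i$. Indeed, $s_!f_!f^!\Q_\cZ \cong (s f)_!\Q_\cY(d)[2d]$, whose $i$th cohomology is $H_c^{i+2d}(\cY)(d)$. So the plan is to prove that the co-unit $f_!f^!\Q_\cZ\to\Q_\cZ$ is an isomorphism.

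\textbf{Reduction to points.} An object of $D_H(\cZ)$ vanishes if its pullback to every point vanishes. I will apply this to the cone of the co-unit, arguing as follows. Both sides are compatible with smooth base change, and $D_H$ is defined by smooth descent, so we may first pull back along a smooth atlas $Z\to\cZ$ and assume $\cZ=Z$ is a scheme. The base change theorem \cite[Theorem 3.1]{Tubach2} commutes $f_!$ with $*$-pullback, and $f^!$ agrees with $f^*$ up to twist and shift. Hence the formation of the co-unit commutes with arbitrary base change $g:Z'\to Z$. For $Z$ a scheme, conservativity of the family of pullbacks to (geometric) points, which holds for mixed Hodge modules via the underlying constructible complexes, reduces us to the case $\cZ=\Spec(k')$. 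Since we are working over $\C$, I would rather first stratify $Z$ into finitely many locally closed pieces. On each piece I would use the hypothesis at the generic point, spread out to an open set, and use excision (\autoref{excisionExactSequence}) to detect isomorphisms stratum by stratum. This reduces the theorem to showing that for closed points the fiber $\bA^r\times B\bG_a^s$ satisfies $H_c^*(\bA^r\times B\bG_a^s)\cong H_c^{*-2d}(\mathrm{pt})(-d)$, with the co-unit inducing the identification, where $d=r-s$.

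\textbf{The fiber computation.} For $\bA^r$ this is the standard computation $H_c^*(\bA^r)=\Q(-r)[-2r]$, with the co-unit being the trace map, which is an isomorphism. For $B\bG_a$, let $p:\Spec\C\to B\bG_a$ be the atlas, which is a $\bG_a$-torsor. Then $p^*$ and the fact that $\bG_a$ is contractible show that $H^*(B\bG_a)=\Q$ in degree $0$. Equivalently, $B\bG_a\to\Spec\C$ is smooth of relative dimension $-1$ with $s_*\Q\cong\Q$, and Verdier duality gives $H_c^*(B\bG_a)=\Q(1)[2]$. The Künneth formula then handles products. To see that the specific co-unit map is the isomorphism, and not just that the groups agree, I would use the factorization $\bA^r\times B\bG_a^s \leftarrow \bA^r\times\bA^s... $. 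More simply, I would use the atlas $q:\bA^r\to\bA^r\times B\bG_a^s$, whose composite to the point is an affine space. Since co-units compose and $q$ is itself an affine-space fibration (a $\bG_a^s$-torsor), it suffices to know the statement for affine spaces $\bA^m$. That case is classical: $\Q\to q_*q^*\Q$ is an isomorphism by homotopy invariance, and the co-unit statement is the Verdier dual of this.

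\textbf{Main obstacle.} The hard part is the base change and stratification reduction. The fibers are only given to be isomorphic to $\bA^r\times B\bG_a^s$ over field-valued points, not locally trivially, so the argument must pass through pointwise conservativity rather than a global trivialization. I would try first to check the iso on the cone's stalks after the smooth atlas reduction, which requires only the pointwise hypothesis.
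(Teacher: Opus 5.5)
Your proposal is correct, but it contains two arguments. The stratification route you say you would rather take is exactly the paper's proof: spread the generic-fiber isomorphism out to a dense open, glue strata with the open/closed triangle, and handle $\bA^r\times B\bG_a^s$ via the $\bG_a^s$-torsor atlas $\bA^r\to\bA^r\times B\bG_a^s$ and composition of co-units. Concretely, the paper does noetherian induction with \autoref{lemmaOpenClosedCoUnit} after passing to $\cZ_{\red}$. It obtains a trivialization $\cW\times_V U\cong\bA^r\times B\bG_a^s\times U$ over a dense open of an atlas via \cite[Lemma 2.3]{SatrianoUsatine6}. It then applies \autoref{propLocallyStackyAffineBundleCoUnit}, which rests on \autoref{lemmaCompositionOfCoUnitMaps} and \autoref{propCoUnitIsoLocallyAffineBundle}.

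The pointwise route you favor in your last paragraph is genuinely different. After smooth descent to a scheme $Z$ and base change (\autoref{lemmaCoUnitAfterPulback}), it uses the hypothesis only at closed points and needs no spreading out. Its cost is the conservativity step, which you assert rather than prove. The cone of the co-unit lies in $D_{H,c}(Z)$ but is not known a priori to lie in $D^b(\MHM(Z))$, since $f_!$ from a stack can produce unbounded objects. So you cannot simply invoke $\mathrm{rat}$ and stalks of constructible complexes. You would need three further facts: that $f_!$ is right t-exact up to shift, so the cone is bounded above; that the t-structure on Tubach's Ind-completed $D_H(Z)$ is non-degenerate on such objects; and a generic-point stalk argument applied to the top nonzero cohomology object. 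The paper's route avoids this entirely, using only the open/closed triangle, smooth descent, and base change.

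Your diagnosis of the main obstacle is also off. The hypothesis being only fiberwise does not force a pointwise argument. Spreading out from the generic point of each stratum does produce a trivialization on a dense open, and that is exactly what the paper exploits.
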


Before the proof, we show that \autoref{theoremGysinIsIsomorphism} implies the following corollary, which is a version of \cite[Theorem 2.5(ii)]{Ekedahl}, but in the setting of mixed Hodge structures, that is needed for our purposes.

\begin{corollary}\label{ChiHdgDoesWhatWeExpect}
Let $\cZ$ be a finite type Artin stack over $\C$ with affine geometric stabilizers. Then $\sum_{i \in \Z} (-1)^i[H_c^i(\cZ)]$ converges in $\widehat{K_0(\MHS)}$, and
\[
	\chi_\Hdg(\e(\cZ)) = \sum_{i \in \Z} (-1)^i[H_c^i(\cZ)].
\]
\end{corollary}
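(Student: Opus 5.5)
We prove \autoref{ChiHdgDoesWhatWeExpect} by reducing, through excision and Ekedahl's presentation of $K_0(\Stack_\C)$, to quotients of schemes by $\GL_n$. There \autoref{theoremGysinIsIsomorphism} lets us compare the stack with a scheme.

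\emph{Convergence.} By \autoref{cohomologyVanishingAboveTwiceDimension}, $H_c^i(\cZ)=0$ for $i>2\dim\cZ$. By \autoref{pqPieceVanishesAboveDegree}, every nonzero $(p,q)$-piece of $H_c^i(\cZ)$ has $p+q\le i$. Since $\cZ$ is finite type, each $H_c^i(\cZ)$ is finite dimensional, so $[H_c^i(\cZ)]$ has norm at most $\exp(i/2)$. This tends to $0$ as $i\to-\infty$, so the sum converges in $\widehat{K_0(\MHS)}$. Write $\chi_c(\cZ)$ for the sum.

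\emph{Additivity.} If $\cW\subset\cZ$ is a closed substack with open complement $\cU$, the long exact sequence of \autoref{excisionExactSequence} gives $\chi_c(\cZ)=\chi_c(\cU)+\chi_c(\cW)$. Exactness passes to the completion because the cohomology is bounded above and each truncated alternating sum is additive. The class $\e$ is also additive. So it suffices to prove the identity on a stratification of $\cZ$. Since $\cZ$ has affine geometric stabilizers, Kresch's stratification lets us assume $\cZ\cong[X/\GL_n]$ with $X$ a finite type scheme. This is the same reduction Ekedahl uses for $K_0(\Stack)$.

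\emph{Comparison with a scheme.} Embed $\GL_n$ in $\GL_N$ acting on $M=\mathrm{Mat}_{n\times N}$ by left multiplication, restricted to the open locus $M^\circ$ of full-rank matrices, on which the action is free. Take $\cY=[(X\times M^\circ)/\GL_n]$, an algebraic space. The projection $\cY\to\cZ$ is an open substack of a vector bundle, not an affine space fibration. Instead use the two fibrations
\[
[(X\times M)/\GL_n]\to\cZ \qquad\text{and}\qquad \cZ\times B\bG_a\to\cZ.
\]
The first is a rank-$nN$ vector bundle with fibres $\bA^{nN}$. \autoref{theoremGysinIsIsomorphism} applies to it and gives $\chi_c$ of the total space equal to $\chi_c(\cZ)\cdot[\Q(-nN)]$. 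The classes $\e$ satisfy the same relation with $\bL^{nN}$.

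Stratify $M$ by rank. Each rank-$r$ stratum, modulo $\GL_n$, becomes a vector-bundle-type fibration over a quotient of $X\times(\text{Grassmannian-type data})$ by the smaller groups $\GL_{n-r}\times\mathrm{GL}$-parabolic stabilizers. Unipotent radicals contribute $B\bG_a$ factors, which \autoref{theoremGysinIsIsomorphism} also handles. This gives a recursion in $n$ in which the full-rank stratum is the algebraic space $\cY$. For algebraic spaces, and hence schemes after further stratification, $\chi_\Hdg(\e(Y))=\chi_c(Y)$ holds by the definition of $\chi_\Hdg$. Inverting $\chi_c(\GL_n)=\chi_\Hdg(\e(\GL_n))$, a unit in the completion, solves the recursion.

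A simpler variant avoids the recursion. The map $X\to[X/\GL_n]$ is a $\GL_n$-torsor, and $\GL_n$ is special. So it suffices to show that $\chi_c$ is multiplicative for Zariski-locally trivial $\GL_n$-torsors over stacks. Filter $\GL_n$ by the Borel subgroup: the quotient $\GL_n\to\GL_n/B$ is a $B$-torsor, and $B\cong\bG_m^n\ltimes U$. The $U$-part is an affine-space torsor, handled by \autoref{theoremGysinIsIsomorphism}. The $\bG_m$-part is handled by excision, comparing the line bundle with its zero section.

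\emph{Main obstacle.} The hard step is the torsor multiplicativity $\chi_c(X)=\chi_c(\GL_n)\chi_c([X/\GL_n])$ for mixed Hodge structures on stacks. I would prove it as above: factor through $B$, treat $U$ with \autoref{theoremGysinIsIsomorphism}, treat $\bG_m$ by excision on line bundles, and treat the flag variety fibration $X/B\to[X/\GL_n]$ by Zariski-local triviality (again because $\GL_n$ is special) together with additivity.
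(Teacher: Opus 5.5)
Your convergence and additivity steps and the reduction to $\cZ \cong [X/\GL_n]$ are fine. The step you yourself flag as the main obstacle, however, is not proved. Both your ``simpler variant'' and your final paragraph rely on Zariski-local triviality of fibrations over the \emph{stack} $[X/\GL_n]$: namely the torsor $X \to [X/\GL_n]$ and the flag-variety fibration $[X/B] \to [X/\GL_n]$. Specialness of $\GL_n$ only says that $\GL_n$-torsors over schemes or algebraic spaces are Zariski-locally trivial. Over a stack this is false. Take $X = \Spec(\C)$. Then $B\GL_n$ has a single point, so its only nonempty open substack is $B\GL_n$ itself, and neither $\Spec(\C) \to B\GL_n$ nor $BB \to B\GL_n$ is trivial over it. For instance, $|BB|$ is a point while $|B\GL_n \times \GL_n/B|$ is not, for $n \geq 2$. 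So the reduction ``it suffices to treat Zariski-locally trivial torsors'' does not apply to the torsor you need, and the flag-bundle step has no justification. Your first variant via rank stratification of $\mathrm{Mat}_{n \times N}$ cannot fill the hole either. The role of $\cZ \times B\bG_a$, the ``Grassmannian-type data'', and the recursion itself are never pinned down.

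The gap can be closed with tools you already have. Let $E = [(X \times \bA^n)/\GL_n] \to \cZ$, so that $X$ is the frame bundle of $E$. Let $F_k$ be the stack of $k$-tuples of pointwise linearly independent vectors in $E$, so $F_0 = \cZ$ and $F_n \cong X$. Then $F_{k+1}$ is the open complement, in the pullback of $E$ to $F_k$, of the rank-$k$ subbundle spanned by the tautological vectors. Excision and \autoref{theoremGysinIsIsomorphism} applied to vector bundles give $\chi_c(F_{k+1}) = \chi_c(F_k)\bigl([\Q(-n)] - [\Q(-k)]\bigr)$, hence $\chi_c(X) = \chi_c(\cZ)\,\chi_\Hdg(\e(\GL_n))$. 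Combine this with Ekedahl's torsor relation $\e(X) = \e(\GL_n)\,\e(\cZ)$ in $\widehat{\sM}_\C$ and the invertibility of $\chi_\Hdg(\e(\GL_n))$ to finish. (Alternatively, treat $[X/B]\to[X/\GL_n]$ as a tower of projective bundles rather than by local triviality.)

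The paper avoids any explicit torsor computation. Ekedahl's $K_0(\Stack_\C)$ is presented by scissor relations and the vector bundle relation, so excision and \autoref{theoremGysinIsIsomorphism} directly produce a homomorphism $\chi$ on $K_0(\Stack_\C)$. This $\chi$ agrees with $\chi_\Hdg \circ \e$ on $K_0(\Var_\C)$, hence everywhere, because $K_0(\Var_\C) \to K_0(\Stack_\C)$ is a localization. The repaired version of your route is longer. Its advantage is that it never needs $\chi$ to be multiplicative on stacks (a K\"unneth formula for $H_c^*$), which the paper's localization argument implicitly uses.
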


\begin{proof}
The series $\sum_{i \in \Z} (-1)^i[H_c^i(\cZ)]$ converges in $\widehat{K_0(\MHS)}$ by \autoref{cohomologyVanishingAboveTwiceDimension} and \autoref{pqPieceVanishesAboveDegree}. By \autoref{excisionExactSequence}, \autoref{theoremGysinIsIsomorphism}, and the definition of $K_0(\Stack_\C)$, there is a well defined ring homomorphism 
\[
	\chi: K_0(\Stack_\C) \to \widehat{K_0(\MHS)}
\]
that takes the class of any finite type Artin stack $\cY$ over $\C$ with affine geometric stabilizers to $\sum_{i \in \Z}(-1)^i[H_c^i(\cY)]$. Therefore we only need to show that $\chi$ coincides with the composition 
\[
	\chi': K_0(\Stack_\C) \to \widehat{\sM}_\C \xrightarrow{\chi_\Hdg} \widehat{K_0(\MHS)}.
\]
But this follows from the fact that the composition 
\[
	K_0(\Var_\C) \to K_0(\Stack_\C) \xrightarrow{\chi} \widehat{K_0(\MHS)}
\]
coincides with the composition 
\[
	K_0(\Var_\C) \to K_0(\Stack_\C) \xrightarrow{\chi'} \widehat{K_0(\MHS)}
\]
and the fact that $K_0(\Var_\C) \to K_0(\Stack_\C)$ is a localization by \cite[Theorem 1.2]{Ekedahl}.
\end{proof}

The remainder of this section will be dedicated to proving \autoref{theoremGysinIsIsomorphism}. We begin with some lemmas.


\begin{lemma}\label{lemmaCompositionOfCoUnitMaps}
Let $\cY, \cZ$, and $\cW$ be locally finite type Artin stacks over $\C$, let $f: \cY \to \cZ$ be a smooth morphism of constant relative dimension, and let $g: \cW \to \cY$ be a morphism. If the co-unit maps $g_! g^! \Q_\cY \to \Q_\cY$ and $(f \circ g)_! (f \circ g)^! \Q_\cZ \to \Q_\cZ$ are isomorphisms, then the co-unit map $f_! f^! \Q_\cZ \to \Q_\cZ$ is an isomorphism.
\end{lemma}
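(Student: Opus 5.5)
The plan is to show that the co-unit for $f$ is a retract-compatible factor of the co-unit for $f\circ g$. The key ingredients are the compatibility of co-units with composition and the fact that $f^!$ is a twist of $f^*$ because $f$ is smooth.

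First I will write down how co-units compose. Identify $(f\circ g)_! = f_! g_!$ and $(f\circ g)^! = g^! f^!$. Then the co-unit $\epsilon_{fg}\colon f_!g_!g^!f^!\Q_\cZ \to \Q_\cZ$ factors as
\[
	f_!g_!g^!f^!\Q_\cZ \xrightarrow{f_!(\epsilon_g)_{f^!\Q_\cZ}} f_!f^!\Q_\cZ \xrightarrow{\epsilon_f} \Q_\cZ.
\]
This is the standard formula for the co-unit of a composite adjunction.

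Next I will use smoothness. Since $f$ is smooth of constant relative dimension $d$, the purity isomorphism \cite[Theorem 3.1(7)]{Tubach2} gives $f^!\Q_\cZ \cong f^*\Q_\cZ(d)[2d] = \Q_\cY(d)[2d]$. The co-unit $\epsilon_g$ is a natural transformation $g_!g^! \to \id$. Since $g^!$ and $g_!$ commute with Tate twists and shifts, $(\epsilon_g)_{\Q_\cY(d)[2d]}$ is identified with $(\epsilon_g)_{\Q_\cY}(d)[2d]$. By hypothesis $(\epsilon_g)_{\Q_\cY}$ is an isomorphism, so the first arrow $f_!(\epsilon_g)_{f^!\Q_\cZ}$ in the factorization is an isomorphism.

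Finally, by hypothesis the composite $\epsilon_{fg}$ is an isomorphism. So $\epsilon_f$ equals $\epsilon_{fg}$ composed with the inverse of an isomorphism, and is therefore an isomorphism.

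The main obstacle is justifying the compatibility of the twist with the co-unit: that the purity isomorphism $f^!\Q_\cZ\cong \Q_\cY(d)[2d]$ transports $(\epsilon_g)_{f^!\Q_\cZ}$ to $(\epsilon_g)_{\Q_\cY}(d)[2d]$. This follows from naturality of $\epsilon_g$, applied to the isomorphism $f^!\Q_\cZ\cong\Q_\cY(d)[2d]$. It also needs $g_!g^!(K(d)[2d]) \cong (g_!g^!K)(d)[2d]$ compatibly with the co-unit, which holds because the Tate twist is tensoring with a pulled-back invertible object and $g^!$, $g_!$ are linear for such tensoring (projection formula).

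If one prefers to avoid that, there is an alternative. Naturality of $\epsilon_g$ together with the fact that $\epsilon_g$ is an isomorphism on $\Q_\cY$ implies it is an isomorphism on $\Q_\cY\otimes L$ for any invertible $L$ pulled back from a point, again by the projection formula.
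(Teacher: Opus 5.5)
Your proposal is correct and follows essentially the same route as the paper: factor the co-unit for $f\circ g$ as $f_!$ applied to the co-unit of $g$ at $f^!\Q_\cZ$, followed by the co-unit of $f$. You then use purity $f^!\Q_\cZ \cong \Q_\cY(d)[2d]$ to see the first map is an isomorphism and conclude by two-out-of-three, and your justification that $(d)[2d]$ commutes with $g_!g^!$ compatibly with the co-unit fills in a step the paper leaves implicit.
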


\begin{proof}
Let $d$ be the relative dimension of $f$. Then applying the purity isomorphism $f^*(d)[2d] \xrightarrow{\sim} f^!$ \cite[Theorem 3.1(7)]{Tubach2} to the isomorphism $g_! g^! f^* \Q_\cZ(d)[2d] = g_! g^! \Q_\cY (d)[2d] \xrightarrow{\sim} \Q_\cY(d)[2d] = f^*\Q_\cZ (d)[2d]$ gives that the co-unit map $g_!g^! f^!\Q_\cZ \to f^! \Q_\cZ$ is an isomorphism, so applying $f_!$ gives that the map $(f \circ g)_! (f \circ g)^! \Q_\cZ \to f_!f^! \Q_\cZ$ is an isomorphism. The lemma then follows from the fact that (in the homotopy category of $D_H(\cZ)$) the co-unit map $(f \circ g)_! (f \circ g)^! \Q_\cZ \to \Q_\cZ$ is the composition of $(f \circ g)_! (f \circ g)^! \Q_\cZ \to f_!f^! \Q_\cZ$ with the co-unit map $f_! f^! \Q_\cZ \to \Q_\cZ$.
\end{proof}

\begin{lemma}\label{lemmaCoUnitAfterPulback}
Let $\cY, \cZ$, and $\cW$ be locally finite type Artin stacks over $\C$, let $f: \cY \to \cZ$ be a smooth morphism of constant relative dimension, let $\cW \to \cZ$ be a morphism, and let $g$ be the base change of $f$ along $\cW \to \cZ$.
\begin{enumerate}

\item\label{partOfLemmaWhereCoUnitOfPullbackIsIso} If the co-unit map $f_!f^! \Q_\cZ \to \Q_\cZ$ is an isomorphism, then the co-unit map $g_!g^!\Q_\cW \to \Q_\cW$ is an isomorphism.

\item\label{partOfCoUnitPullbackLemmaTheOtherWay} If $\cW \to \cZ$ is a smooth cover and the co-unit map $g_!g^!\Q_\cW \to \Q_\cW$ is an isomorphism, then the co-unit map $f_!f^! \Q_\cZ \to \Q_\cZ$ is an isomorphism.

\end{enumerate}
\end{lemma}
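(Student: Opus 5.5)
Both parts come down to base change for $!$-pushforward and the fact that $*$-pullback along a smooth cover is conservative, applied to the counit for the smooth morphism $f$. Let $h: \cW \to \cZ$ be the given morphism and $h': \cY_\cW = \cY \times_\cZ \cW \to \cY$ the base change of $h$, so that $f h' = h g$. Let $d$ be the relative dimension of $f$, which is also the relative dimension of $g$.

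For part (\ref{partOfLemmaWhereCoUnitOfPullbackIsIso}), I first use the purity isomorphism of \cite[Theorem 3.1(7)]{Tubach2} to rewrite $f^!\Q_\cZ \cong f^*\Q_\cZ(d)[2d] = \Q_\cY(d)[2d]$, and likewise $g^!\Q_\cW \cong \Q_{\cY_\cW}(d)[2d]$. Applying $h^*$ to the counit $f_!f^!\Q_\cZ \to \Q_\cZ$ gives an isomorphism, since $h^*$ preserves isomorphisms. Proper base change (part of the six functor formalism in \cite[Theorem 3.1]{Tubach2}) identifies
\[
h^*f_!f^!\Q_\cZ \cong g_!h'^*f^!\Q_\cZ \cong g_!h'^*\Q_\cY(d)[2d] \cong g_!\Q_{\cY_\cW}(d)[2d] \cong g_!g^!\Q_\cW.
\]

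The step needing genuine care is checking that, under this chain, $h^*$ of the counit for $f$ corresponds to the counit for $g$. I would handle this through the adjoint description. Because $f$ is smooth, $f^!\cong f^*(d)[2d]$, so $f_!(d)[2d]$ is left adjoint to $f^*$. Under this identification the counit $f_!f^!\to \id$ is the counit of the adjunction $f_!(d)[2d]\dashv f^*$. The base change isomorphism $h^*f_! \cong g_!h'^*$ is compatible with the purity isomorphisms for $f$ and $g$, as both come from the six-functor formalism of \cite{Tubach2}; this compatibility is the point I would double-check. Granting it, the counit of $f_!(d)[2d]\dashv f^*$ is carried to the counit of $g_!(d)[2d]\dashv g^*$, which is standard for a base-changed adjunction. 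Hence $h^*$ of the counit for $f$ is identified with the counit $g_!g^!\Q_\cW\to\Q_\cW$, and the latter is therefore an isomorphism.

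For part (\ref{partOfCoUnitPullbackLemmaTheOtherWay}), the same identification shows that $h^*$ applied to the counit $f_!f^!\Q_\cZ\to\Q_\cZ$ is the counit for $g$, which is an isomorphism by hypothesis. When $h$ is a smooth cover, $h^*$ is conservative on $D_H(\cZ)$. This holds because $D_H$ of an Artin stack is defined by smooth descent as a limit over a smooth atlas, so $h^*$ detects isomorphisms: its cone would pull back to zero and hence be zero. Therefore the counit for $f$ is an isomorphism.
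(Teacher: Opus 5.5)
Your proposal is correct and follows essentially the same route as the paper. Both arguments show, via base change and purity, that $h^*$ carries the co-unit $f_!f^!\Q_\cZ \to \Q_\cZ$ to the co-unit $g_!g^!\Q_\cW \to \Q_\cW$, and part (2) then follows from conservativity of $h^*$ along a smooth cover. You are in fact more explicit than the paper, which leaves both the compatibility of the isomorphism chain with the co-unit maps and the conservativity of $h^*$ implicit.
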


\begin{proof}
Let $d$ be the relative dimension of $f$, let $h$ be the map $\cW \to \cZ$, let $\cV = \cY \times_\cZ \cW$, and let $q$ be the projection $\cV \to \cY$. For both parts, it is sufficient to prove that applying $h^*$ to $f_!f^! \Q_\cZ \to \Q_\cZ$ gives (up to isomorphism) $g_!g^!\Q_\cW \to \Q_\cW$. This follows from the canonical isomorphisms 
\begin{align*}
	h^*f_!f^!\Q_\cZ &\cong g_!q^*f^!\Q_\cZ \cong g_! q^* f^* \Q_\cZ(d)[2d] \\
	&\cong g_!g^*h^*\Q_\cZ(d)[2d] \cong g_!g^!h^*\Q_\cZ \cong g_!g^! \Q_\cW,
\end{align*}
where the first isomorphism is base change \cite[Theorem 3.1(4)]{Tubach2}, and the second and fourth isomorphisms are purity \cite[Theorem 3.1(7)]{Tubach2}.
\end{proof}

\begin{lemma}\label{lemmaOpenClosedCoUnit}
Let $\cY$ be a locally finite type Artin stack over $\C$, let $\cZ$ be a closed substack of $\cY$, and let $\cU$ be its open complement. Let $\cW$ be a locally finite type Artin stack over $\C$, let $f: \cW \to \cY$ be a smooth morphism of constant relative dimension, and let $g$ and $h$ be the base changes of $f$ to $\cU$ and $\cZ$, respectively. If the co-unit maps $g_! g^! \Q_\cU \to \Q_\cU$ and $h_! h^! \Q_\cZ \to \Q_\cZ$ are isomorphisms, then the co-unit map $f_! f^! \Q_\cY \to \Q_\cY$ is an isomorphism.
\end{lemma}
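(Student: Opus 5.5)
Let $i: \cZ \to \cY$ and $j: \cU \to \cY$ denote the inclusions, and let $i': \cW_\cZ \to \cW$ and $j': \cW_\cU \to \cW$ be their base changes along $f$. The plan is to apply the distinguished triangle of \autoref{excisionExactSequence} to the object $f_!f^!\Q_\cY$ and to $\Q_\cY$. The co-unit map gives a morphism of triangles
\[
	j_!j^!(f_!f^!\Q_\cY) \to f_!f^!\Q_\cY \to i_*i^*(f_!f^!\Q_\cY)
\]
mapping to
\[
	j_!j^!\Q_\cY \to \Q_\cY \to i_*i^*\Q_\cY .
\]
This is a morphism of triangles because the excision triangle is functorial in $\cK$: it is built from the co-unit $j_!j^! \to \id$ and the unit $\id \to i_*i^*$, which are natural transformations in a stable $\infty$-category. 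By the two-out-of-three property for distinguished triangles, it therefore suffices to show that the outer two maps are isomorphisms.

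For the outer terms, identify them with the co-unit maps of $g$ and $h$. Since $j$ is an open immersion, $j^! = j^*$, so base change \cite[Theorem 3.1(4)]{Tubach2} gives $j^*f_! \cong g_! j'^*$. Purity \cite[Theorem 3.1(7)]{Tubach2} for the smooth maps $f$ and $g$ (both of relative dimension $d$) then gives
\[
	j'^* f^! \Q_\cY \cong j'^* f^*\Q_\cY(d)[2d] \cong g^*\Q_\cU(d)[2d] \cong g^!\Q_\cU .
\]
Hence $j^*(f_!f^!\Q_\cY \to \Q_\cY)$ is identified with $g_!g^!\Q_\cU \to \Q_\cU$. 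This is exactly the computation already carried out in the proof of \autoref{lemmaCoUnitAfterPulback}, with $h$ there replaced by $j$. The same computation with $i$ in place of $j$ identifies $i^*(f_!f^!\Q_\cY \to \Q_\cY)$ with $h_!h^!\Q_\cZ \to \Q_\cZ$. By hypothesis both maps are isomorphisms. Applying $j_!$ (respectively $i_*$) preserves these isomorphisms, so the outer maps in the morphism of triangles are isomorphisms, and the middle map is as well.

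The main obstacle is bookkeeping: one must check that the purity and base change isomorphisms are compatible with the co-unit maps, so that the identifications really are of the co-unit morphisms and not merely of their sources and targets. I will handle this exactly as in \autoref{lemmaCoUnitAfterPulback}, where it was used implicitly. Alternatively, one can avoid the issue entirely. Pullback along a smooth map commutes with $!$-pullback up to twist, and both the co-unit and the base change morphism come from the adjunction, so their compatibility reduces to the standard compatibility of exchange maps with adjunction units and co-units in the six-functor formalism.
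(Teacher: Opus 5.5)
Your proposal is correct and is essentially the paper's proof. Both apply the functorial localization triangle $j_!j^*\cK \to \cK \to i_!i^*\cK$ to the co-unit map and reduce to the outer terms; the paper does this via the 5-lemma on cohomology objects for the t-structure, you via two-out-of-three for a morphism of cofiber sequences, which amounts to the same thing. Both then identify $j^*$ and $i^*$ of the co-unit with the co-units of $g$ and $h$ using the base change and purity computation from the proof of \autoref{lemmaCoUnitAfterPulback}.
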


\begin{proof}
Let $j$ and $i$ be the inclusions of $\cU$ and $\cZ$, respectively, into $\cY$. Then \autoref{excisionExactSequence} and \cite[Theorem 3.1(3 and 7)]{Tubach2} gives for each $\cK \in D_H(\cY)$ a canonical (functorial in $\cK$) distinguished triangle
\[
	j_!j^*\cK \to \cK \to i_!i^*\cK.
\]
Thus by the 5-lemma and the fact that $D_H(\cY)$ has a t-structure, it is sufficient to show that $j^*$ (resp. $i^*$) applied to the co-unit map $f_! f^! \Q_\cY \to \Q_\cY$ gives (up to isomorphism) the co-unit map $g_! g^! \Q_\cU \to \Q_\cU$ (resp. $h_! h^! \Q_\cZ \to \Q_\cZ$), but this was shown in the proof of \autoref{lemmaCoUnitAfterPulback}.
\end{proof}

We are now equipped to prove \autoref{theoremGysinIsIsomorphism} by reducing it to a sequence of special cases.

\begin{proposition}\label{propCoUnitIsoLocallyAffineBundle}
Let $\cY$ and $\cZ$ be locally finite type Artin stacks over $\C$, and let $f: \cY \to \cZ$ be a morphism. If there exists some $r \in \Z_{\geq 0}$ and a smooth cover $\cW \to \cZ$ by an Artin stack $\cW$ such that $\cY \times_\cZ \cW$ is isomorphic (as a stack over $\cW$) to $\bA^r_\C \times_\C \cW$, then the co-unit map $f_! f^! \Q_\cZ \to \Q_\cZ$ is an isomorphism.
\end{proposition}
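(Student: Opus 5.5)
We reduce to the case of a trivial affine bundle over a base and then to the single map $\bA^r_\C \to \Spec(\C)$.

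First, note that $f$ is smooth of constant relative dimension $r$. This is because smoothness and relative dimension can be checked smooth locally on the target, and the base change of $f$ to $\cW$ is the projection $\bA^r_\C \times_\C \cW \to \cW$. Let $g: \bA^r_\C\times_\C\cW \to \cW$ denote this projection. By \autoref{lemmaCoUnitAfterPulback}(\ref{partOfCoUnitPullbackLemmaTheOtherWay}), applied to the smooth cover $\cW \to \cZ$, it suffices to show that the co-unit map $g_!g^!\Q_\cW \to \Q_\cW$ is an isomorphism.

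Now $g$ is itself the base change of the structure map $p: \bA^r_\C \to \Spec(\C)$ along $\cW \to \Spec(\C)$. By \autoref{lemmaCoUnitAfterPulback}(\ref{partOfLemmaWhereCoUnitOfPullbackIsIso}), applied to the smooth morphism $p$ of constant relative dimension $r$, it therefore suffices to show that $p_!p^!\Q \to \Q$ is an isomorphism in $D_H(\Spec(\C))$.

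For this last step, we use the purity isomorphism $p^! \cong p^*(r)[2r]$ from \cite[Theorem 3.1(7)]{Tubach2}. It identifies $p_!p^!\Q$ with $p_!\Q_{\bA^r}(r)[2r]$, whose cohomology is $H_c^{*+2r}(\bA^r_\C)(r)$. This is $\Q$ (of type $(0,0)$) concentrated in degree $0$, because $H^{2r}_c(\bA^r_\C) \cong \Q(-r)$ and all other compactly supported cohomology vanishes. Both sides are therefore $\Q$ in degree $0$, and a morphism in $D^b(\MHM(\Spec \C))$ between objects concentrated in degree $0$ is determined by the induced map on $H^0$. It remains to check that the co-unit map is nonzero on $H^0$, which is the trace map $H^{2r}_c(\bA^r)(r)\to\Q$.

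We expect this nonvanishing to be the main obstacle, since it is a compatibility between Tubach's formalism and classical topology. We would reduce it to $r=1$ by factoring $p$ as a composition of projections $\bA^1$-bundles and using the composition argument in the proof of \autoref{lemmaCompositionOfCoUnitMaps}, under which the co-unit maps compose. For $r=1$, we would appeal to the fact that on schemes Tubach's six functors are compatible, via the forgetful functor to constructible sheaves, with the classical six functors, where the co-unit $p_!p^!\Q\to\Q$ is the classical trace. That trace is an isomorphism for $\bA^1$, for instance because it is dual to the adjunction unit $\Q\to p_*p^*\Q$, which is an isomorphism by homotopy invariance. Since the forgetful functor is conservative, the co-unit is an isomorphism in $D_H$ as well.
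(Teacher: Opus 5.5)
Your proposal is correct and follows the paper's proof. You use the same two applications of \autoref{lemmaCoUnitAfterPulback} to reduce to $p\colon \bA^r_\C \to \Spec(\C)$, and you settle that case by duality of the co-unit with the unit $\Q \to p_*p^*\Q$ together with contractibility of $\bA^r_\C$. The paper applies this duality directly to $\bA^r_\C$, so your purity computation on $H^0$ and your reduction to $r=1$ are valid but unnecessary.
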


\begin{proof}
Let $g: \bA^r_\C \to \Spec(\C)$ be the structure map. For each $i \in \Z$, applying $H^i$ to the co-unit map $g_! g^! \Q \to \Q$ gives a map $H^{i+2r}_c(\bA^r_\C)(r) \to H^i_c(\Spec(\C))$ that is dual to the pullback map $H^i(\Spec(\C)) \to H^i(\bA^r_\C)$, which is an isomorphism since $\bA^r_\C$ is contractible. Therefore the co-unit map $g_! g^! \Q \to \Q$ is an isomorphism.

Now let $h: \cY \times_\cZ \cW \to \cW$ be the projection. Then by the hypotheses on $\cY \times_\cZ \cW$, \autoref{lemmaCoUnitAfterPulback}(\ref{partOfLemmaWhereCoUnitOfPullbackIsIso}), and the fact that $g_! g^! \Q \to \Q$ is an isomorphism, we have that the co-unit map $h_!h^! \Q_{\cW} \to \Q_{\cW}$ is an isomorphism. Therefore $f_! f^! \Q_\cZ \to \Q_\cZ$ is an isomorphism by \autoref{lemmaCoUnitAfterPulback}(\ref{partOfCoUnitPullbackLemmaTheOtherWay}).
\end{proof}

\begin{proposition}\label{propLocallyStackyAffineBundleCoUnit}
Let $\cY$ and $\cZ$ be locally finite type Artin stacks over $\C$, and let $f: \cY \to \cZ$ be a morphism. If there exists some $r,s  \in \Z_{\geq 0}$ and a smooth cover $\cW \to \cZ$ by an Artin stack $\cW$ such that $\cY \times_\cZ \cW$ is isomorphic (as a stack over $\cW$) to $\bA^r_\C \times_\C B\bG_{a,\C}^s \times_\C \cW$, then the co-unit map $f_! f^! \Q_\cZ \to \Q_\cZ$ is an isomorphism.
\end{proposition}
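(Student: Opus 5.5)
The plan is to reduce to the two basic pieces $\bA^r$ and $B\bG_a^s$ separately, using \autoref{lemmaCoUnitAfterPulback} and \autoref{lemmaCompositionOfCoUnitMaps}. First, by \autoref{lemmaCoUnitAfterPulback}(\ref{partOfCoUnitPullbackLemmaTheOtherWay}), applied to the smooth cover $\cW \to \cZ$, it suffices to treat the case $\cZ = \cW$ and $\cY = \bA^r_\C \times_\C B\bG_{a,\C}^s \times_\C \cW$ with $f$ the projection. Here we note that $f$ is smooth of constant relative dimension $r - s$, as that lemma requires. Then, by \autoref{lemmaCoUnitAfterPulback}(\ref{partOfLemmaWhereCoUnitOfPullbackIsIso}), it suffices to prove the statement for the structure map $p: \bA^r_\C \times_\C B\bG_{a,\C}^s \to \Spec(\C)$, since $f$ is the base change of $p$ along $\cW \to \Spec(\C)$.

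Next we factor $p$ as the projection $q: \bA^r_\C \times B\bG_a^s \to B\bG_a^s$ followed by $b: B\bG_a^s \to \Spec(\C)$. The map $q$ becomes the trivial affine bundle $\bA^r \times \Spec(\C) \to \Spec(\C)$ after base change along the smooth cover $\Spec(\C) \to B\bG_a^s$, so its co-unit is an isomorphism by \autoref{propCoUnitIsoLocallyAffineBundle}. Co-units compose in the homotopy category: for $p = b \circ q$, the co-unit $p_!p^!\Q \to \Q$ factors as $b_!(q_!q^!)b^!\Q \to b_!b^!\Q \to \Q$. The first arrow is $b_!$ applied to the co-unit of $q$ evaluated on $b^!\Q$. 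Since $b^!\Q \cong \Q(-s)[-2s]$ by purity, this arrow is an isomorphism. So it remains to show that the co-unit for $b: B\bG_a^s \to \Spec(\C)$ is an isomorphism.

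To handle $b$, we apply \autoref{lemmaCompositionOfCoUnitMaps} with $f = b$, which is smooth of relative dimension $-s$, and $g: \Spec(\C) \to B\bG_a^s$ the universal torsor. Then $g$ is the base change of $b$ along itself: indeed $\Spec(\C)\times_{B\bG_a^s}\Spec(\C) \cong \bG_a^s$. Equivalently, $g$ becomes $\bA^s \to \Spec(\C)$ after the smooth cover $\Spec(\C) \to B\bG_a^s$, so the co-unit of $g$ is an isomorphism by \autoref{propCoUnitIsoLocallyAffineBundle}. Moreover, $b \circ g = \id$, whose co-unit is trivially an isomorphism. Hence \autoref{lemmaCompositionOfCoUnitMaps} gives that the co-unit for $b$ is an isomorphism. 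The analogous composition argument with $g' = \bA^r \times g$ would also handle the whole product at once.

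The main obstacle I anticipate is bookkeeping rather than substance. The first point to check is that the hypotheses of \autoref{lemmaCompositionOfCoUnitMaps} are satisfied: $f = b$ must be smooth of constant relative dimension, which holds, and $g$ is merely required to be a morphism. The second is that, in the composition argument for $q$, we correctly identify $b^!\Q$ up to twist and shift with $\Q_{B\bG_a^s}$, so that the co-unit of $q$ evaluated on it is still an isomorphism. This follows from purity, and the twist commutes with $q_!q^!$.
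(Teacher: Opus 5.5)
Your proof is correct and is essentially the paper's argument. The paper reduces to $\cW$ via \autoref{lemmaCoUnitAfterPulback}(\ref{partOfCoUnitPullbackLemmaTheOtherWay}) and then applies \autoref{lemmaCompositionOfCoUnitMaps} once, to the $\bG_a^s$-torsor $h\colon \bA^r_\C\times_\C\cW\to\bA^r_\C\times_\C B\bG_{a,\C}^s\times_\C\cW$ together with the trivial bundle $\bA^r_\C\times_\C\cW\to\cW$, which is exactly your closing remark. That route spares you the reduction to a point, the splitting $p=b\circ q$, and the extra ``composition of co-unit isomorphisms'' step, although you justify that step correctly via purity.

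One harmless slip: $g\colon\Spec(\C)\to B\bG_{a,\C}^s$ is not the base change of $b$ along itself. What is true, and what you actually use, is that the base change of $g$ along $g$ is $\bG_{a,\C}^s\cong\bA^s_\C\to\Spec(\C)$.
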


\begin{proof}
Let $g:  \cY \times_\cZ \cW \to \cW$ be the projection. Then by the hypotheses on $\cY \times_\cZ \cW$, we have a $\cW$-morphism $h: \bA^r_\C \times_\C \cW \to \cY \times_\cZ \cW$ that is a $\bG_{a,\C}^s$-torsor. By \autoref{propCoUnitIsoLocallyAffineBundle}, the co-unit maps $h_! h^! \Q_{\cY \times_\cZ \cW} \to \Q_{\cY \times_\cZ \cW}$ and $(g \circ h)_! (g \circ h)^! \Q_\cW \to \Q_\cW$ are isomorphisms. Thus by \autoref{lemmaCompositionOfCoUnitMaps}, the co-unit map $g_! g^! \Q_\cW \to \Q_\cW$ is an isomorphism. This implies the proposition by \autoref{lemmaCoUnitAfterPulback}(\ref{partOfCoUnitPullbackLemmaTheOtherWay}).
\end{proof}

\begin{proof}[Proof of \autoref{theoremGysinIsIsomorphism}]
We begin by noting that $f_! f^! \Q_\cZ \to \Q_\cZ$ being an isomorphism implies the last sentence of \autoref{theoremGysinIsIsomorphism} by the purity isomorphism \cite[Theorem 3.1(7)]{Tubach2}, so we only need to show that $f_! f^! \Q_\cZ \to \Q_\cZ$ is an isomorphism.

By \autoref{lemmaOpenClosedCoUnit}, we may replace $f$ with its base change along $\cZ_\red \to \cZ$ and therefore assume $\cZ$ is reduced. By noetherian induction, \autoref{lemmaOpenClosedCoUnit}, and \autoref{propLocallyStackyAffineBundleCoUnit}, it is sufficient to show that there exists a nonempty open substack $\cU$ of $\cZ$ over which $f$ satisfies the hypotheses of \autoref{propLocallyStackyAffineBundleCoUnit}. Let $V \to \cZ$ be a smooth cover by a scheme $V$. Since $\cZ$ is finite type over $\C$, we may assume that $V$ is finite type over $\C$. Let $\cW \to V$ be the base change of $f$ along $V \to \cZ$. Then by the hypotheses on $f$ and \cite[Lemma 2.3]{SatrianoUsatine6}, there exist $r, s \in \Z_{\geq 0}$ and a nonempty open subset $U$ of $V$ such that $(\cW \times_V U)_\red \cong (\bA^r_\C \times_\C U)_\red$ as stacks over $V$. Note that \cite[Lemma 2.3]{SatrianoUsatine6} only states that the isomorphism $(\cW \times_V U)_\red \cong (\bA^r_\C \times_\C B\bG_{a,\C}^s \times_\C U)_\red$ is over $k$, but the proof, which can be found as the main step in the proof of \cite[Proposition 2.8]{SatrianoUsatine1}, actually gives that the isomorphism is as stacks over $V$. Since $\cZ$ is reduced, $V$ is reduced, so $U$ is reduced. Noting also that $\cW \times_V U \to U$ is smooth by the fact that $f$ is smooth, we therefore get that $\cW \times_V U$ and $\bA^r_\C \times_\C B\bG_{a,\C}^s \times_\C U$ are both reduced, so in fact $\cW \times_V U \cong \bA^r_\C \times_\C B\bG_{a,\C}^s \times_\C U$ as stacks over $V$. Since smooth morphisms are open, we may let $\cU$ be the open substack of $\cZ$ supported on the image of $U$ in $|\cZ|$. Then $\cU$ is a nonempty open substack of $\cZ$ over which $f$ satisfies the hypotheses of \autoref{propLocallyStackyAffineBundleCoUnit}, and we are done.
\end{proof}

\section{A cohomological interpretation for certain measurable sets}

Throughout this section, let $\cX$ be a smooth finite type equidimensional Artin stack over $\C$ with affine diagonal and a good moduli space, and let $\cA \subset |\sJ(\cX)|$ be a measurable set such that $\theta_n(\cA)$ is a quasi-compact locally constructible subset of $|\sJ_n(\cX)|$ for all $n \in \Z_{\geq 0}$ and $\nu_\cX(\cA) = \lim_{n \to \infty}(\nu_\cX(\theta_n^{-1}(\theta_n(\cA))))$. In this section, we will give a cohomological interpretation for the image of $\nu_\cX(\cA)$ in $\widehat{K_0(\MHS)}$. We begin with the following theorem.

\begin{theorem}\label{cohomologyForSpecialMeasurableStabilizes}
Let $i \in \Z$. Then there exists some $m \in \Z_{\geq 0}$ such that for all $n \geq m$,
\[
	H_c^{i+2n\dim\cX}(\cY^{\theta_n(\cA)})(n\dim\cX) \cong H_c^{i+2m\dim\cX}(\cY^{\theta_m(\cA)})(m\dim\cX).
\]
\end{theorem}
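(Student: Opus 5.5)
Fix $i$ and write $d = \dim\cX$ and $\cA_n = \theta_n(\cA)$. The plan is to compare $\cY^{\cA_{n+1}}$ with $\cY^{\cB}$, where $\cB = (\theta^{n+1}_n)^{-1}(\cA_n) \subset |\sJ_{n+1}(\cX)|$. First, by \autoref{theoremSmoothnessOfTruncation} the truncation $\theta^{n+1}_n$ is smooth of constant relative dimension $d$. By \autoref{canonicalDecompositionSmoothMap}, each $\cY^{\cB}_j$ is then isomorphic to $\cY^{\cA_n}_j \times_{\sJ_n(\cX)} \sJ_{n+1}(\cX)$. Its fibers over points of $\sJ_n(\cX)$ are fibers of $\theta^{n+1}_n$, and these are of the form $\bA^r_{k'}\times B\bG_{a,k'}^s$, as in \cite[Proposition 4.11]{SatrianoUsatine6} and \autoref{computingStackyTangentSpaceOfTwistedJetStack}: the fiber of $\theta^{n+1}_n$ is a torsor under the relevant $\Ext$ groups, hence a stacky affine space once it is nonempty. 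So \autoref{theoremGysinIsIsomorphism} applies and gives
\[
H_c^{i+2(n+1)d}(\cY^{\cB})((n+1)d) \cong H_c^{i+2nd}(\cY^{\cA_n})(nd).
\]
Since $\cA_{n+1} \subset \cB$, the statement reduces to showing $H_c^{i+2(n+1)d}(\cY^{\cA_{n+1}}) \cong H_c^{i+2(n+1)d}(\cY^{\cB})$ for all large $n$.

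For that second isomorphism I would use \autoref{cohomologyOfConstructibleSetsWithSmallDifference} with $\cC=\cA_{n+1}\subset\cD=\cB$. It suffices to have
\[
i+2(n+1)d > 1 + 2\dim(\cB\setminus\cA_{n+1}).
\]
Now $\cB\setminus\cA_{n+1} = \theta_{n+1}\bigl(\theta_n^{-1}(\cA_n)\bigr)\setminus\theta_{n+1}(\cA)$ is the image of $\theta_n^{-1}(\cA_n)\setminus\theta_{n+1}^{-1}(\cA_{n+1})$. Therefore
\[
\e(\cB\setminus\cA_{n+1})\,\bL^{-(n+2)d} = \nu_\cX\bigl(\theta_n^{-1}(\cA_n)\bigr) - \nu_\cX\bigl(\theta_{n+1}^{-1}(\cA_{n+1})\bigr),
\]
using that the cylinders are bounded and that volumes of cylinders are computed at level $n+1$. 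The standing hypothesis $\nu_\cX(\cA)=\lim_n \nu_\cX(\theta_n^{-1}(\cA_n))$ says these consecutive differences tend to $0$ in norm. Hence
\[
\dim(\cB\setminus\cA_{n+1}) - (n+2)d \to -\infty.
\]
In particular, for $n$ large we get $2\dim(\cB\setminus\cA_{n+1}) < 2(n+1)d + i - 1$, since the left side minus $2(n+1)d$ tends to $-\infty$ while $i$ is fixed. Taking $m$ to be such a threshold and composing the two isomorphisms from level $m$ upward proves the theorem.

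The main obstacle is the first step: verifying that the restriction of $\theta^{n+1}_n$ over each stratum $\cY^{\cA_n}_j$ satisfies the hypotheses of \autoref{theoremGysinIsIsomorphism}, namely surjectivity and fibers exactly of the form $\bA^r\times B\bG_a^s$ over arbitrary field-valued points. I would try first to extract this from the deformation-theoretic description of the fibers of truncation on smooth $\cX$, in the same way the fibers of $\sL_1$ were computed in \autoref{computingStackyTangentSpaceOfTwistedJetStack}, and to get nonemptiness from formal smoothness of $\sJ(\cX)\to\sJ_n(\cX)$ for smooth $\cX$. A secondary check is the volume identity for the difference set; I would justify it via \autoref{containedConstructiblesBound} and the fact that the norm only sees dimensions.
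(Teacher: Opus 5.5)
Your proof is correct and uses the same ingredients as the paper. These are the Gysin isomorphism (\autoref{theoremGysinIsIsomorphism}) applied to base changes of the smooth truncations via \autoref{canonicalDecompositionSmoothMap} and \cite[Proposition 4.11]{SatrianoUsatine6}, together with \autoref{cohomologyOfConstructibleSetsWithSmallDifference}, where the dimension of the difference set is controlled by the cylinder volume identity. The only difference is bookkeeping: you telescope one level at a time, comparing $\theta_{n+1}(\cA)$ with $(\theta^{n+1}_n)^{-1}(\theta_n(\cA))$ using that consecutive volume differences tend to $0$, whereas the paper compares $\theta_n(\cA)$ in a single step with $(\theta^n_m)^{-1}(\theta_m(\cA))$ and then iterates the Gysin isomorphism from level $m$ up to level $n$.
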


\begin{proof}
For each $n \in \Z_{\geq 0}$, set $\cC_n = \theta_n(\cA)$. Then each $\cC_n$ is a quasi-compact locally constructible subset of $|\sJ_n(\cX)|$ and the sequence $\{\nu_\cX(\theta_n^{-1}(\cC_n))\}_n$ converges as $n \to \infty$. Therefore there exists some $m \in \Z_{\geq 0}$ such that for all $n \geq m$,
\[
	\Vert \nu_\cX(\theta_m^{-1}(\cC_m)) - \nu_\cX(\theta_n^{-1}(\cC_n)) \Vert < \exp(-\dim\cX + (i-1)/2).
\]
For the remainder of this proof, let $n \geq m$, and set $\cC = \cC_n$ and $\cD = (\theta^n_m)^{-1}(\cC_m)$. Then
\begin{align*}
	\nu_\cX(\theta_m^{-1}(\cC_m)) - \nu_\cX(\theta_n^{-1}(\cC_n)) &= \left(\e(\cD) - \e(\cC)\right)\bL^{-(n+1)\dim\cX}\\
	&= \e(\cD \setminus \cC)\bL^{-(n+1)\dim\cX},
\end{align*}
where the second equality is because $\cC \subset \cD$. Therefore
\begin{align*}
	\dim(\cD \setminus \cC) &= \log\Vert \e(\cD \setminus \cC) \Vert \\
	&= \log\left(\Vert \bL^{(n+1)\dim\cX} \Vert \cdot \Vert \e(\cD \setminus \cC)\bL^{-(n+1)\dim\cX} \Vert\right)\\
	&= (n+1)\dim\cX + \log\Vert \e(\cD \setminus \cC)\bL^{-(n+1)\dim\cX} \Vert\\
	&< (n+1)\dim\cX - \dim\cX + (i-1)/2,
\end{align*}
where the third equality is by \autoref{RecallingNormAndDimension}. Rearranging gives
\[
	i + 2n\dim\cX > 1+ 2\dim(\cD \setminus \cC).
\]
Therefore by \autoref{cohomologyOfConstructibleSetsWithSmallDifference},
\[
	H_c^{i + 2n\dim\cX}(\cY^\cC) \cong H_c^{i + 2n\dim\cX}(\cY^\cD).
\]
Therefore in order to complete our proof, it is sufficient to prove that 
\[
	H_c^{i + 2n\dim\cX}(\cY^\cD)(n\dim\cX) \cong H^{i+2m\dim\cX}(\cY^{\cC_m})(m\dim\cX).
\]
For each $r \geq m$, set $\cD_r = (\theta^r_m)^{-1}(\cC_m)$. In particular $\cD_n = \cD$ and $\cD_m = \cC_m$. Thus it is sufficient to show that for all $r \geq m$,
\[
	H_c^{i + 2(r+1)\dim\cX}(\cY^{\cD_{r+1}})((r+1)\dim\cX) \cong H_c^{i + 2r\dim\cX}(\cY^{\cD_r})(r\dim\cX).
\]
We have that $\cD_{r+1} = (\theta^{r+1}_r)^{-1}(\cD_r)$. Therefore by \autoref{theoremSmoothnessOfTruncation} and \autoref{canonicalDecompositionSmoothMap}, there is a morphism $\cY^{\cD_{r+1}} \to \cY^{\cD_r}$ that is a base change of $\theta^{r+1}_r: \sJ_{r+1}(\cX) \to \sJ_r(\cX)$. Thus by \autoref{theoremSmoothnessOfTruncation}, \cite[Proposition 4.11]{SatrianoUsatine6}, and \autoref{theoremGysinIsIsomorphism},
\[
	H_c^{i + 2(r+1)\dim\cX}(\cY^{\cD_{r+1}})(\dim\cX) \cong H_c^{i + 2r\dim\cX}(\cY^{\cD_r}),
\]
and we are done.
\end{proof}

We are now able to set the following notation.

\begin{notation}
For $i \in \Z$, let $H^i(\cA)$ denote $H_c^{i+2m\dim\cX}(\cY^{\theta_m(\cA)})(m\dim\cX)$, where $m$ satisfies the conclusion of \autoref{cohomologyForSpecialMeasurableStabilizes}. By \autoref{cohomologyForSpecialMeasurableStabilizes}, we have that $H^i(\cA)$ is a well defined (up to isomorphism) polarizable mixed Hodge structure.
\end{notation}

\begin{remark}\label{remarkAboutCohomologyVanishingAndConvergence}
By definition, we have $h^{p,q}(H^i(\cA)) = 0$ if $p+q > i$. By \autoref{theoremSmoothnessOfTruncation}, we have $\dim(\cY^{\theta_n(\cA)}) \leq \dim \sJ_n(\cX) = (n+1)\dim\cX$ for any $n \in \Z$, so $H^i(\cA) = 0$ for all $i > 2\dim\cX$. In particular, the series $\sum_{i \in \Z} (-1)^i [H^i(\cA)]$ converges in $\widehat{K_0(\MHS)}$.
\end{remark}

We will now prove that the $H^i(\cA)$ give a cohomological interpretation for the image of $\nu_\cX(\cA)$ in $\widehat{K_0(\MHS)}$.

\begin{theorem}\label{theoremCohomologicalInterpretationForCertainMeasurableSets}
We have
 \[
 	\chi_\Hdg\left(\bL^{\dim\cX}\nu_\cX(\cA)\right) = \sum_{i \in \Z} (-1)^i [H^i(\cA)].
\]
\end{theorem}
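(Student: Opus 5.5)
We compare both sides with their finite-level approximations and control the errors using the weight (norm) filtration on $\widehat{K_0(\MHS)}$. For each $n$, set $\cC_n = \theta_n(\cA)$. By \autoref{propositionThatCanonicalDecompositionIsDecomposition}, we have $\e(\cC_n) = \e(\cY^{\cC_n})$. Hence $\nu_\cX(\theta_n^{-1}(\cC_n)) = \e(\cY^{\cC_n})\bL^{-(n+1)\dim\cX}$. Applying \autoref{ChiHdgDoesWhatWeExpect} then gives
\[
	\chi_\Hdg\left(\bL^{\dim\cX}\nu_\cX(\theta_n^{-1}(\cC_n))\right) = \sum_{j \in \Z}(-1)^j [H_c^{j+2n\dim\cX}(\cY^{\cC_n})(n\dim\cX)],
\]
after reindexing $j \mapsto j + 2n\dim\cX$. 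Here we use that twisting by $(n\dim\cX)$ corresponds to multiplying by $\chi_\Hdg(\bL^{-n\dim\cX})$, and that the sign is unchanged since the shift is even. By continuity of $\chi_\Hdg$ and the hypothesis $\nu_\cX(\cA) = \lim_n \nu_\cX(\theta_n^{-1}(\cC_n))$, the left-hand side converges to $\chi_\Hdg(\bL^{\dim\cX}\nu_\cX(\cA))$. It therefore suffices to show that the right-hand sides converge to $\sum_i (-1)^i[H^i(\cA)]$.

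Fix a threshold $N \in \Z$ and argue termwise in $j$. By \autoref{cohomologyVanishingAboveTwiceDimension} together with $\dim \cY^{\cC_n} \le (n+1)\dim\cX$ (\autoref{theoremSmoothnessOfTruncation}), the term $H_c^{j+2n\dim\cX}(\cY^{\cC_n})(n\dim\cX)$ vanishes for $j > 2\dim\cX$, uniformly in $n$. By \autoref{pqPieceVanishesAboveDegree}, after the twist every $(p,q)$-piece of the $j$th term satisfies $p+q \le j$. Its class therefore has norm at most $\exp(j/2)$. The same bounds hold for $H^j(\cA)$ by \autoref{remarkAboutCohomologyVanishingAndConvergence}. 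Consequently only the finitely many indices $N \le j \le 2\dim\cX$ contribute above norm $\exp(N/2)$, again uniformly in $n$.

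For each such $j$, \autoref{cohomologyForSpecialMeasurableStabilizes} provides $m_j$ such that for all $n \ge m_j$ the $j$th term is isomorphic to $H^j(\cA)$. Take $n \ge \max_{N \le j \le 2\dim\cX} m_j$. Then the difference between $\sum_j(-1)^j[H_c^{j+2n\dim\cX}(\cY^{\cC_n})(n\dim\cX)]$ and $\sum_j (-1)^j[H^j(\cA)]$ involves only terms with $j < N$, so its norm is at most $\exp((N-1)/2)$. Letting $N \to -\infty$ gives the convergence, and uniqueness of limits yields the theorem.

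The main obstacle is making the norm estimate precise. One must check that the completion of $K_0(\MHS)$ by weight is such that a mixed Hodge structure with all $p+q \le j$ has norm at most $\exp(j/2)$, and that the $\widehat{K_0(\MHS)}$ norm is non-Archimedean, so that a finite sum of small terms, together with a tail of arbitrarily many small terms, stays small. This follows from the definition of the filtration, but it is where the infinite alternating sums must be handled with care.
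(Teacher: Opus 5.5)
Your proposal is correct and follows essentially the same route as the paper: you approximate $\chi_\Hdg(\bL^{\dim\cX}\nu_\cX(\cA))$ by the level-$n$ alternating sums via \autoref{propositionThatCanonicalDecompositionIsDecomposition} and \autoref{ChiHdgDoesWhatWeExpect}, then combine the uniform vanishing above $2\dim\cX$, the norm bound $\exp(j/2)$, and termwise stabilization from \autoref{cohomologyForSpecialMeasurableStabilizes} for the finitely many indices above a threshold. Your threshold $N$ plays the role of the paper's $2\log\varepsilon$, and the one input you leave implicit, namely $\nu_\cX(\theta_n^{-1}(\cC_n)) = \e(\cC_n)\bL^{-(n+1)\dim\cX}$, is the fact the paper cites from \cite[Theorem 4.12]{SatrianoUsatine6}.
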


\begin{proof}
Set $\Theta = \chi_\Hdg\left(\bL^{\dim\cX}\nu_\cX(\cA)\right)$, and let $\varepsilon \in \R_{>0}$. It is sufficient to show that $\Vert \Theta - \sum_{i \in \Z} (-1)^i [H^i(\cA)] \Vert < \varepsilon$.

For each $n \in \Z_{\geq 0}$, set $\cC_n = \theta_n(\cA)$. Since $\nu_\cX(\cA) = \lim_{n \to \infty} \nu_{\cX}(\theta^{-1}_n(\cC_n))$, there exists some $m_1 \in \Z_{\geq 0}$ such that for all $n \geq m_1$,
\[
	\Vert \nu_\cX(\cA) - \e(\cY^{\cC_n})\bL^{-(n+1)\dim\cX} \Vert = \Vert \nu_{\cX}(\cA) - \nu_{\cX}(\theta^{-1}_n(\cC_n)) \Vert < \varepsilon/\exp(\dim\cX),
\]
where the first equality is by \autoref{propositionThatCanonicalDecompositionIsDecomposition} and \cite[Theorem 4.12]{SatrianoUsatine6}. Recalling that $\Vert\Psi\Vert\leq\Vert\chi_\Hdg(\Psi)\Vert\leq$ for all $\Psi\in \widehat{\sM}_\C$, it follows from \autoref{ChiHdgDoesWhatWeExpect} that
\[
	\Vert \Theta - \sum_{i \in \Z} (-1)^i[H_c^{i+2n\dim\cX}(\cY^{\theta_n(\cA)})(n\dim\cX)] \Vert < \varepsilon.
\]
By \autoref{remarkAboutCohomologyVanishingAndConvergence}, there are only finitely many $i \geq 2\log(\varepsilon)$ such that $H^i(\cA) \neq 0$, so there exists some $m_2 \in \Z_{\geq 0}$ such that
\[
	H^i(\cA) \cong H_c^{i+2n\dim\cX}(\cY^{\theta_n(\cA)})(n\dim\cX)
\]
for all $n \geq m_2$ and $i \geq 2\log(\varepsilon)$. Now set $m = \max(m_1, m_2)$. By \autoref{remarkAboutCohomologyVanishingAndConvergence},
\[
	\Vert [H^i(\cA)] \Vert \leq \exp(i/2)
\]
and
\[
	\Vert [H_c^{i+2m\dim\cX}(\cY^{\theta_m(\cA)})(m\dim\cX)] \Vert \leq \exp(i/2)
\]
for all $i$. Therefore
\[
	\Vert \left(\sum_{i \in \Z} (-1)^i [H^i(\cA)]\right) - \left(\sum_{i \in \Z} (-1)^i[H_c^{i+2m\dim\cX}(\cY^{\theta_m(\cA)})(m\dim\cX)]\right) \Vert < \varepsilon.
\]
Since $m \geq m_1$, we have
\[
	\Vert \Theta - \sum_{i \in \Z} (-1)^i[H_c^{i+2m\dim\cX}(\cY^{\theta_m(\cA)})(m\dim\cX)] \Vert < \varepsilon,
\]
so together we have
\[
	\Vert \Theta - \sum_{i \in \Z} (-1)^i [H^i(\cA)] \Vert < \varepsilon,
\]
completing our proof.
\end{proof}

We end this section by noting that we have now completed the proofs of \autoref{stringyCohomologyStabilizes} and \autoref{theoremStringyCohomologyGivesStringyHD}. Indeed the former is a direct consequence of \autoref{theoremStringyClassWellDefined} and \autoref{cohomologyForSpecialMeasurableStabilizes}, and the latter is a direct consequence of \autoref{theoremStringyClassWellDefined} and \autoref{theoremCohomologicalInterpretationForCertainMeasurableSets}.

\section{Orbifold cohomology}

As in \autoref{sectionSmoothnessOfTruncationMorphisms}, if $n \in \Z_{\geq 0}$, $\ell \in \Z_{>0}$, and $A$ is a $k$-algebra, we will let $\cD^\ell_{n, A}$ denote the stack quotient $[\Spec( A[t^{1/\ell}]/(t^{n+1}) ) / \mu_\ell]$, where $\xi \in \mu_\ell$ acts on $A[t^{1/\ell}]/(t^{n+1})$ by $f(t^{1/\ell}) \mapsto f(\xi t^{1/\ell})$. Throughout this section, let $\cX$ be a smooth finite type equidimensional tame Artin stack over $k$ with affine diagonal. For each $\ell \in \Z_{>0}$, the closed immersion $B\mu_\ell = (\cD^\ell_{0,k})_\red \hookrightarrow \cD^\ell_{0,k}$ induces a morphism $\pi_\ell: \sJ_0^\ell(\cX) \to I_{\mu_\ell}(\cX)$. The key input to proving \autoref{theoremStringyIsOrbifold} is the following proposition, which we will dedicate most of this section to proving.

\begin{proposition}\label{propComparingCohomologyInertiaToZerothJet}
Let $\ell \in \Z_{>0}$, and let $\cY$ be a connected component of $I_{\mu_\ell}(\cX)$. If $k = \C$, then for all $i \in \Z$,
\[
	H^{i+2(\dim\cX - \dim\cY)}_c(\pi_\ell^{-1}(\cY))(\dim\cX - \dim\cY) \cong H^i_c(\cY).
\]
\end{proposition}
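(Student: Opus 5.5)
Since $\pi_\ell: \pi_\ell^{-1}(\cY) \to \cY$ is a morphism of finite type stacks over $\C$, the plan is to apply \autoref{theoremGysinIsIsomorphism} to it. For that we need three things: that $\pi_\ell|_{\pi_\ell^{-1}(\cY)}$ is smooth of constant relative dimension $d = \dim\cX - \dim\cY$, and that every geometric fiber has the form $\bA^r_{k'} \times_{k'} B\bG_{a,k'}^s$. The cohomological statement then follows at once from the last sentence of \autoref{theoremGysinIsIsomorphism}. Tameness of $\cX$ matters here: locally $\cX$ looks like $[U/G]$ with $G$ linearly reductive. Representable maps $B\mu_\ell \otimes k' \to \cX$ are then rigid enough that deforming from $B\mu_\ell$ to $\cD^\ell_0$ is controlled by $\mu_\ell$-equivariant deformation theory, with no obstructions because $\mu_\ell$ is linearly reductive.

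\textbf{Fibers.} Fix a point $f: B\mu_\ell \otimes k' \to \cX$ of $\cY$. The fiber $\pi_\ell^{-1}(f)$ parametrizes extensions of $f$ along the square-zero thickening $B\mu_\ell \otimes k' \hookrightarrow \cD^\ell_{0,k'}$, whose ideal sheaf is $\cI = \bigoplus_{j=1}^{\ell-1} \cO(j)\, t^{j/\ell}$. An extension exists: the obstruction lies in $\Ext^1(Lf^*L_\cX, \cI)$, and $Lf^*L_\cX$ is perfect in amplitude $[0,1]$ while $\mu_\ell$ is linearly reductive. Deformation theory, argued exactly as in the proof of \autoref{computingStackyTangentSpaceOfTwistedJetStack}, therefore makes the fiber a torsor-gerbe modeled on $\Ext^0(Lf^*L_\cX, \cI)$ with automorphism groups $\Ext^{-1}(Lf^*L_\cX, \cI)$. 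A torsor under a vector space over a field is trivial, so the fiber is $\bA^r_{k'} \times B\bG_{a,k'}^s$ with $r = \dim \Ext^0$ and $s = \dim \Ext^{-1}$. Representability of the extension is automatic, since any extension of a representable map along a nilpotent thickening stays representable.

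\textbf{Dimension count.} We need $r - s = \dim\cX - \dim\cY$. Use the triangle $Lf^*L_\cX \to \cF \to \cG$ from \cite[Lemma 2.2]{SatrianoUsatine6}, or work locally with a presentation $[U/G]$. Then $r - s$ equals the dimension of the $\mu_\ell$-invariants of $\cF^\vee \otimes \cI$ minus those of $\cG^\vee \otimes \cI$. Because $\cI$ contains every nontrivial character exactly once, this difference is the dimension of the non-invariant part of $f^*T_\cX$. That is $\dim\cX$ minus the dimension of the $\mu_\ell$-fixed part of the tangent complex, which in turn is the dimension of $I_{\mu_\ell}(\cX)$ at $f$. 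Here we use that the cyclotomic inertia of a smooth tame stack is smooth, with tangent complex the invariant part of $f^*T_\cX$; this gives $\dim\cY$. Since $\cY$ is connected and smooth, $d$ is constant on $\cY$.

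\textbf{Smoothness.} This is the step I expect to be the main obstacle. I would imitate \autoref{theoremSmoothnessOfTruncation}: show that $\pi_\ell^{-1}(\cY)$ is smooth (it is open and closed in the smooth stack $\sJ^\ell_0(\cX)$, by \autoref{TwistedJetStacksAreSmoothAndEquidimensionalOfCorrectDimension}) and that $\cY$ is smooth. Then \autoref{miracleFlatnessForStacks} gives flatness, using the fiber dimension $d$ and the fact that $\dim \pi_\ell^{-1}(\cY) = \dim\cX = \dim\cY + d$. Flatness together with the smooth fibers found above gives smoothness. The care needed is in identifying $\pi_\ell^{-1}(\cY)$ as a union of components of equidimension $\dim\cX$, and in showing that $I_{\mu_\ell}(\cX)$ is smooth of the stated dimension. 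For both I would pass to an étale-local quotient presentation $[U/\GL_n]$ given by tameness and the local structure theorem, where $I_{\mu_\ell}$ becomes a disjoint union of quotients of fixed loci $U^{\mu_\ell}$, which are smooth.
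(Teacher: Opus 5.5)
Your fiber computation treats $B\mu_\ell\otimes_k k'\hookrightarrow\cD^\ell_{0,k'}$ as a square-zero thickening, and for $\ell\geq 3$ it is not one. Its ideal is $I=(t^{1/\ell})\subset k'[t^{1/\ell}]/(t)$, and $I^2=(t^{2/\ell})\neq 0$; only $I^\ell=0$. So the formalism you invoke---extensions of $f$ have obstructions in $\Ext^1(Lf^*L_\cX,I)$ and form a torsor under $\Ext^0(Lf^*L_\cX,I)$---does not apply to this thickening in one step. Indeed $I$ is not even an $\cO_{B\mu_\ell}$-module, since it is not killed by $I$. Your count $r-s=\dim\cX-\dim\cY$ comes out right only because, character by character, $I$ agrees with $\bigoplus_{j=1}^{\ell-1}I^j/I^{j+1}$. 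But \autoref{theoremGysinIsIsomorphism} needs each fiber of $\pi_\ell$ over a field point to actually be isomorphic to $\bA^r_{k'}\times_{k'}B\bG_{a,k'}^s$. Without the torsor description, nothing in your argument establishes that. Since this is the central step, the proposal as written has a genuine gap.

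The missing idea is to filter the thickening into square-zero pieces, which is exactly how the paper proceeds. Set $\cC^\ell_{n,k}=[\Spec(k[t^{1/\ell}]/(t^{n/\ell}))/\mu_\ell]$ for $1\le n\le\ell$. Then $\cC^\ell_{1,k}=B\mu_\ell$, $\cC^\ell_{\ell,k}=\cD^\ell_{0,k}$, and each $\cC^\ell_{n,k}\hookrightarrow\cC^\ell_{n+1,k}$ is square-zero with ideal $(t^{n/\ell})\cong\cO_{B\mu_\ell}(-n)$. Let $\cY^\ell_n$ be the part of $\uHom^{\rep}_k(\cC^\ell_{n,k},\cX)$ lying over $\cY$. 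The paper's proof runs as follows.
\begin{enumerate}
\item $\cY^\ell_n$ is smooth of equidimension $\sum_{a=0}^{n-1}d_a(\cY)$, by computing the fibers of $\sL_1(\cY^\ell_n)\to\cY^\ell_n$. The case $n=1$ already gives smoothness of $\cY$ and $\dim\cY=d_0(\cY)$, with no local quotient presentations needed.
\item Every fiber of $\cY^\ell_{n+1}\to\cY^\ell_n$ over a field point is $\bA^{d_n(\cY)}_{k'}$. This is honest square-zero deformation theory, and there are no $B\bG_a$ factors since $\cX$ is Deligne--Mumford.
\item Hence each step is smooth of constant relative dimension by \autoref{miracleFlatnessForStacks}.
\item Apply \autoref{theoremGysinIsIsomorphism} to each of the $\ell-1$ steps and compose, using $\dim\pi_\ell^{-1}(\cY)=\dim\cX$ from \autoref{theoremSmoothnessOfTruncation} to identify the total shift.
\end{enumerate}
Working step by step means one never needs the fiber of the composite $\pi_\ell$ to be an affine space. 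If you want to keep a one-step application, you must prove that separately, e.g.\ by an iterated-torsor argument over affine bases. Once the fibers are handled correctly, your miracle-flatness argument for smoothness and the final appeal to \autoref{theoremGysinIsIsomorphism} are fine.
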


We need some notation that we will use for the remainder of this section. First note that since $k$ has characteristic 0 and $\cX$ is smooth and tame, $\cX$ is smooth and Deligne--Mumford and therefore has a tangent bundle $T_\cX$. If $\ell \in \Z_{>0}$, $\cY$ is a connected component of $I_{\mu_\ell}(\cX)$, and $a \in \Z$, we will set 
\[
	d_a(\cY) = \dim_{k'} H^0((f^* T_\cX)(-a)),
\]
where $f$ is any morphism $B\mu_\ell \otimes_k k' \to \cX$, with $k'$ a field extension of $k$, such that the corresponding point $\Spec(k') \to I_{\mu_\ell}(\cX)$ is in $\cY$. It is a standard fact that $d_a(\cY)$ does not depend on the choice of $f$. See e.g., the proof of \cite[Proposition 4.19]{SatrianoUsatine6}. If $\ell, n \in \Z_{>0}$, and $A$ is a $k$-algebra, we will set
\[
	\cC^\ell_{n,A} = [\Spec( A[t^{1/\ell}]/(t^{n/\ell}) ) / \mu_\ell],
\]
where $\xi \in \mu_\ell$ acts on $A[t^{1/\ell}]/(t^{n/\ell})$ by $f(t^{1/\ell}) \mapsto f(\xi t^{1/\ell})$. In particular, $\cC^\ell_{1,A} = B\mu_\ell \otimes_k A$ and $\cC^\ell_{\ell, A} = \cD^\ell_{0,A}$. For each $\ell, n \in \Z_{>0}$, we will set
\[
	\cZ^\ell_n = \uHom^{\rep}_k(\cC^\ell_{n,k}, \cX).
\]
In particular, $\cZ^\ell_1 = I_{\mu_\ell}(\cX)$ and $\cZ^\ell_\ell = \sJ^\ell_0(\cX)$.

\begin{remark}\label{remarkAboutHomRepSubJetStack}
Suppose that $n \leq \ell$. Then the structure map $\cC^\ell_{n,k} \to \Spec(k)$ is a good moduli space map with affine diagonal. Thus by \cite[Proposition 4.4]{SatrianoUsatine4}, $\cZ^\ell_n$ is an open substack of $\uHom_k(\cC^\ell_{n,k}, \cX)$, which is a finite type Artin stack over $k$ with affine diagonal by \cite[Theorem 3.12(xv, xii) and Remark 3.13]{Rydh}. Therefore $\cZ^\ell_n$ is a finite type Artin stack over $k$ with affine diagonal.
\end{remark}

For each $\ell, n \in \Z_{>0}$, the closed immersion $\cC^\ell_{n, k} \hookrightarrow \cC^\ell_{n+1, k}$ induces a map 
\[
	\cZ^\ell_{n+1} \to \cZ^\ell_n.
\]
If in addition $\cY$ is a connected component of $I_{\mu_\ell}(\cX)$, we will let $\cY^\ell_n$ denote the preimage of $\cY$ along the composition
\[
	\cZ^\ell_n \to \cZ^\ell_{n-1} \to \dots \to \cZ^\ell_1 = I_{\mu_\ell}(\cX).
\]

\begin{proposition}\label{propositionSubJetDimension}
Let $\ell, n \in \Z_{>0}$, assume $n \leq \ell$, and let $\cY$ be a connected component of $I_{\mu_\ell}(\cX)$. Then $\cY^\ell_n$ is smooth over $k$ and equidimension $\sum_{a = 0}^{n-1} d_a(\cY)$.
\end{proposition}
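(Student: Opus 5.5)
We argue by induction on $n$, mirroring the proof of \autoref{TwistedJetStacksAreSmoothAndEquidimensionalOfCorrectDimension}. For $n = 1$, $\cY^\ell_1 = \cY$ is a connected component of $I_{\mu_\ell}(\cX)$. Since $\cX$ is smooth and Deligne--Mumford in characteristic $0$, the inertia $I_{\mu_\ell}(\cX)$ is smooth. Its tangent space at $f: B\mu_\ell \otimes_k k' \to \cX$ is the $\mu_\ell$-invariant part $H^0(f^*T_\cX) = H^0((f^*T_\cX)(0))$, so $\dim\cY = d_0(\cY)$. For the inductive step, with $2 \le n \le \ell$, we study the map $\rho: \cY^\ell_n \to \cY^\ell_{n-1}$ induced by the closed immersion $\cC^\ell_{n-1,k} \hookrightarrow \cC^\ell_{n,k}$. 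This is a square-zero extension whose ideal is generated by $t^{(n-1)/\ell}$. As a sheaf on $B\mu_\ell$ this ideal is $\cO(n-1)$ (up to the sign convention of \cite[Proposition 3.1]{SatrianoUsatine6}), which is a $k'$-line on which $\mu_\ell$ acts through the character indexed by $n-1$.

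The main step is to show that $\rho$ is smooth with fibers that are torsors under $H^0((f^*T_\cX)(-(n-1)))$, so that the relative dimension is $d_{n-1}(\cY)$. We use deformation theory of representable maps from $\cC^\ell_{n,k'}$ into the smooth Deligne--Mumford stack $\cX$. Given a map $\varphi: \cC^\ell_{n-1,k'} \to \cX$, its extensions to $\cC^\ell_{n,k'}$ have obstruction in $\Ext^1(L\varphi^*L_\cX, I)$ and form a torsor under $\Ext^0(L\varphi^*L_\cX, I)$, where $I$ is the ideal above. Because $\cX$ is smooth and Deligne--Mumford, $L_\cX = \Omega_\cX$ is locally free. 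Moreover $I$ is scheme-theoretically supported on the reduced substack $B\mu_\ell \otimes k'$, and $\cC^\ell_{n-1,k'}$ is tame with affine coarse space. Hence the obstruction group is $H^1(B\mu_\ell, f^*T_\cX \otimes I) = 0$, and the torsor group is $H^0(B\mu_\ell, f^*T_\cX \otimes \cO(n-1))$, which has dimension $d_{n-1}(\cY)$ with the paper's sign convention $(-a)$. We also need automorphisms to be trivial, $\Ext^{-1} = 0$, which holds because $L_\cX$ is a sheaf. Representability of the extension is automatic, since it restricts to a representable map on the reduced substack.

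To conclude smoothness, we apply the infinitesimal lifting criterion in families, or equivalently \autoref{miracleFlatnessForStacks} together with smoothness of the fibers. This requires checking that the relative deformation theory is unobstructed over arbitrary Artin local bases, which follows from the same $H^1$ vanishing after base change. By induction $\cY^\ell_{n-1}$ is smooth and equidimensional of dimension $\sum_{a=0}^{n-2} d_a(\cY)$. Then $\cY^\ell_n$ is smooth and equidimensional of dimension $\sum_{a=0}^{n-1}d_a(\cY)$, and \autoref{remarkAboutHomRepSubJetStack} ensures that all of these stacks are finite type.

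The main obstacle is pinning down the twist of the ideal $(t^{(n-1)/\ell})/(t^{n/\ell})$ so that it matches the sign convention in the definition of $d_a$, and making the deformation argument work in families rather than just on fibers. If the family version is awkward, we instead compute the tangent and obstruction spaces of $\cZ^\ell_n$ directly, as in \autoref{computingStackyTangentSpaceOfTwistedJetStack}, using $\Ext^1$ on $\cC^\ell_{n,k'}$. Vanishing of $H^1$ on the tame stack $\cC^\ell_{n,k'}$ gives smoothness, and the filtration of $\cO_{\cC^\ell_{n,k'}}$ by powers of $t^{1/\ell}$, whose graded pieces are $\cO(a)$ for $0 \le a \le n-1$, gives the dimension count $\sum_a d_a(\cY)$ directly.
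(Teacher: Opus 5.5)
Your proof is correct, but its main line is organized differently from the paper's. The paper does not induct on $n$. It gets smoothness of every $\cY^\ell_n$ at once from Rydh's smoothness of $\uHom_k(\cC^\ell_{n,k},\cX)$, in which $\cZ^\ell_n$ is open by \autoref{remarkAboutHomRepSubJetStack}. It then reads off the dimension via \cite[Lemma 3.34]{SatrianoUsatine1}: the fiber of $\sL_1(\cY^\ell_n)\to\cY^\ell_n$ over $f\colon\cC^\ell_{n,k'}\to\cX$ is an affine space of dimension $\dim_{k'}H^0(f^*T_\cX)$. Splitting $f^*T_\cX\cong\bigoplus_a\cO(a)^{d_a(\cY)}$ on $\cC^\ell_{n,k'}$, with $h^0(\cO(a))=1$ exactly when $a\le n-1$, gives the sum. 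That is essentially your fallback in the last paragraph; your filtration by powers of $t^{1/\ell}$, together with exactness of $H^0$ on a tame stack, replaces the splitting.

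Your main route instead shows that each $\rho\colon\cY^\ell_n\to\cY^\ell_{n-1}$ is smooth with fibers $\bA^{d_{n-1}(\cY)}$. In effect you fold \autoref{propSubTruncationFiberComputation} into the proof and check formal smoothness of $\rho$ directly. For a square-zero extension $A'\to A$ with kernel $J$, the lifting problem is along $\cC^\ell_{n,A}\cup_{\cC^\ell_{n-1,A}}\cC^\ell_{n-1,A'}\hookrightarrow\cC^\ell_{n,A'}$. Its ideal $J\,t^{(n-1)/\ell}$ is a quasi-coherent sheaf on $B\mu_\ell\otimes_k A$, so the obstruction is an $H^1$ on a tame stack and vanishes. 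This costs a family version of the deformation argument, which the paper sidesteps by citation. In return it gives smoothness of the truncation maps $\cY^\ell_{n+1}\to\cY^\ell_n$ directly. The paper only obtains that afterwards, in the proof of \autoref{propComparingCohomologyInertiaToZerothJet}, by combining this proposition with \autoref{propSubTruncationFiberComputation} and \autoref{miracleFlatnessForStacks}.

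For the same reason, your parenthetical alternative ``equivalently \autoref{miracleFlatnessForStacks} together with smoothness of the fibers'' is circular inside your induction. Miracle flatness requires already knowing that $\cY^\ell_n$ is equidimensional of dimension $\dim\cY^\ell_{n-1}+d_{n-1}(\cY)$, which is the conclusion. So you must use the lifting criterion.

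As for the sign you flagged: the paper's convention, as in the proof of \autoref{truncatedTwistedDiscRankDimensionGlobalSections}, is that $t^{a/\ell}$ is an invariant section of $\cO(a)$. With this convention the ideal spanned by $t^{(n-1)/\ell}$ is $\cO(-(n-1))$, matching \autoref{propSubTruncationFiberComputation}. So the torsor group is $H^0((f_0^*T_\cX)(-(n-1)))$, of dimension $d_{n-1}(\cY)$ by definition.
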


\begin{proof}
By \cite[Remark 3.13(v)]{Rydh}, the stack $\uHom_k(\cC^\ell_{n,k}, \cX)$ is smooth over $k$. Since $\cZ^\ell_n$ is an open substack of $\uHom_k(\cC^\ell_{n,k}, \cX)$ by \autoref{remarkAboutHomRepSubJetStack}, we have that $\cZ^\ell_n$ is smooth over $k$. Since $\cY^\ell_n$ is a union of connected components of $\cZ^\ell_n$, we have that $\cY^\ell_n$ is smooth over $k$. Thus by \cite[Lemma 3.34]{SatrianoUsatine1}, we only need to show that every fiber of $\sL_1(\cY^\ell_n) \to \cY^\ell_n$ has dimension $\sum_{a = 0}^{n-1} d_a(\cY)$.

Let $\Spec(k') \to \cY^\ell_n$ be a morphism with $k'$ a field extension of $k$. Then $\Spec(k') \to \cY^\ell_n$ corresponds to a map $f: \cC^\ell_{n, k'} \to \cX$ such that the composition $f_0: B\mu_\ell \otimes_k k' = \cC^\ell_{1, k'} \hookrightarrow \cC^\ell_{n, k'} \xrightarrow{f} \cX$ corresponds to a point of $\cY$. By considering the space that parametrizes deformations of the map  $f: \cC^\ell_{n,k'} \to \cX$ to a map $\cC^\ell_{n,k'} \otimes_{k'} k'[x]/(x^2) \to \cX$ and noting that the ideal defining $\cC^\ell_{n,k'}$ in $\cC^\ell_{n,k'} \otimes_{k'} k'[x]/(x^2)$ is isomorphic as a module to $\cO_{\cC^\ell_{n,k'}}$, we see that the fiber of $\sL_1(\cY^\ell_n) \to \cY^\ell_n$ over $\Spec(k') \to \cY^\ell_n$ is an affine space over $k'$ of dimension equal to
\[
	\dim_{k'} H^0(f^*T_\cX).
\]
Since $f^*T_\cX$ is a locally free sheaf on $\cC^\ell_{n,k'}$, it must be isomorphic to a direct sum of twists of $\cO_{\cC^\ell_{n,k'}}$ by \cite[Proposition 3.1]{SatrianoUsatine6}. Thus by definition of the $d_a(\cY)$ and the fact that $f_0$ corresponds to a point of $\cY$, we have that
\[
	f^*T_\cX \cong \bigoplus_{a = 0}^{\ell -1} \cO_{\cC^\ell_{n,k'}}(a)^{d_a(\cY)}.
\]
The proposition thus follows from the fact that
\[
	\dim_{k'} H^0(\cO_{\cC^\ell_{n,k'}}(a)) = \begin{cases}1, & a = 0, \dots, n-1\\ 0, & a= n, \dots, \ell-1 \end{cases},
\]
which is immediate from considering $\cO_{\cC^\ell_{n,k'}}$ as the module $k'[t^{1/\ell}]/(t^{n/\ell})$ with $\mu_\ell$-action.
\end{proof}

\begin{proposition}\label{propSubTruncationFiberComputation}
Let $\ell, n \in \Z_{>0}$, assume $n+1 \leq \ell$, let $\cY$ be a connected component of $I_{\mu_\ell}(\cX)$, let $k'$ be a field extension of $k$, and let $\Spec(k') \to \cY^\ell_n$ be a morphism. Then
\[
	\cY^\ell_{n+1} \times_{\cY^\ell_n} \Spec(k') \cong \bA_{k'}^{d_n(\cY)}
\]
as stacks over $k'$.
\end{proposition}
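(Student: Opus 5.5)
The plan is to compute the fiber of $\cZ^\ell_{n+1} \to \cZ^\ell_n$ over the given point by deformation theory, in the spirit of the proofs of \autoref{computingStackyTangentSpaceOfTwistedJetStack} and \autoref{propositionSubJetDimension}. First, since $\cY^\ell_{n+1}$ is by definition the preimage of $\cY^\ell_n$ along $\cZ^\ell_{n+1} \to \cZ^\ell_n$, the fiber $\cY^\ell_{n+1} \times_{\cY^\ell_n} \Spec(k')$ equals $\cZ^\ell_{n+1} \times_{\cZ^\ell_n} \Spec(k')$. So it suffices to describe the latter.

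The point $\Spec(k') \to \cY^\ell_n$ corresponds to a representable map $f: \cC^\ell_{n,k'} \to \cX$. For a $k'$-algebra $A$, an $A$-point of the fiber is an extension of $f_A: \cC^\ell_{n,A} \to \cX$ to $\cC^\ell_{n+1,A}$, together with the $2$-isomorphism data. The key observation is that the closed immersion $\cC^\ell_{n,A} \hookrightarrow \cC^\ell_{n+1,A}$ is a square-zero thickening. Its ideal is generated by $t^{n/\ell}$ in $A[t^{1/\ell}]/(t^{(n+1)/\ell})$, and $t^{2n/\ell}=0$ since $2n \geq n+1$. As an $\cO_{\cC^\ell_{n,A}}$-module this ideal $\cI_A$ is killed by $t^{1/\ell}$, so it is the pushforward from $B\mu_\ell \otimes_k A$ of $A$ with $\mu_\ell$ acting through the character of $t^{n/\ell}$, i.e. a twist $\cO(\pm n)$.

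Since $\cX$ is smooth and Deligne--Mumford, $L_\cX = \Omega_\cX$ is locally free. Standard deformation theory then says the following. The obstruction lies in $\Ext^1(f_A^*\Omega_\cX, \cI_A) = H^1(f_A^*T_\cX \otimes \cI_A)$. Extensions form a pseudo-torsor under $\Hom(f_A^*\Omega_\cX, \cI_A) = H^0(f_A^*T_\cX \otimes \cI_A)$. Automorphisms of an extension restricting to the identity are given by $\Ext^{-1} = 0$. The higher cohomology vanishes because $\cC^\ell_{n,A} \to \Spec(A)$ is a good moduli space of a tame stack, so $H^0$ is exact and commutes with base change in $A$. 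Hence the obstruction vanishes, and the fiber is a set-valued functor that is a torsor under $A \mapsto H^0(f^*T_\cX \otimes \cI_{k'}) \otimes_{k'} A$. Representability of every extension is automatic, since representability of a map from $\cC^\ell_{n+1,A}$ can be checked on the reduced closed substack $B\mu_\ell \otimes A$, where the map agrees with $f_0$.

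Choosing one extension over $k'$ trivializes the torsor, giving an isomorphism of the fiber with the affine space of the vector space $H^0(f_0^*T_\cX \otimes \cO(\pm n))$. By \cite[Proposition 3.1]{SatrianoUsatine6}, $f_0^*T_\cX$ decomposes into twists, and only the summands of the correct weight contribute. So the dimension is $d_n(\cY)$, by the definition $d_a(\cY) = \dim H^0((f_0^*T_\cX)(-a))$.

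The step I expect to be the main obstacle is the sign and weight bookkeeping: checking that the character of $\mu_\ell$ on $t^{n/\ell}$ matches the twist $(-n)$ in the definition of $d_n(\cY)$, under the paper's convention for $\cO(a)$. I would verify this against the computation $\dim H^0(\cO_{\cC^\ell_{n,k'}}(a)) = 1$ for $a<n$ in \autoref{propositionSubJetDimension}, which fixes the convention. A secondary issue is making the torsor identification functorial in $A$, so that one gets an isomorphism of stacks rather than just of $k'$-points. This I would handle using the cohomology-and-base-change property of tame stacks noted above.
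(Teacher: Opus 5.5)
Your proposal is correct and follows the same deformation-theoretic route as the paper, which likewise identifies the fiber with the space of extensions of $f$ across the square-zero thickening $\cC^\ell_{n,k'} \hookrightarrow \cC^\ell_{n+1,k'}$ with ideal generated by $t^{n/\ell}$; you supply details the paper leaves implicit, namely vanishing of obstructions and automorphisms, representability, and functoriality in $A$. The sign you flag comes out as the paper states: the ideal is $\cO_{B\mu_\ell \otimes_k k'}(-n)$ in the paper's convention, so the fiber has dimension $\dim_{k'} H^0((f_0^*T_\cX)(-n)) = d_n(\cY)$.
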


\begin{proof}
$\Spec(k') \to \cY^\ell_n$ corresponds to a map $f: \cC^\ell_{n, k'} \to \cX$ such that the composition $f_0: B\mu_\ell \otimes_k k' = \cC^\ell_{1, k'} \hookrightarrow \cC^\ell_{n, k'} \xrightarrow{f} \cX$ corresponds to a point of $\cY$. By considering the space that parametrizes deformations of the map  $f: \cC^\ell_{n,k'} \to \cX$ to a map $\cC^\ell_{n+1,k'} \to \cX$ and noting that the ideal defining $\cC^\ell_{n,k'}$ in $\cC^\ell_{n+1,k'}$ is generated by $t^{n/\ell}$ and is thus isomorphic as a module to $\cO_{B\mu_\ell \otimes_k k'}(-n)$, we see that $\cY^\ell_{n+1} \times_{\cY^\ell_n} \Spec(k')$ is isomorphic to an affine space over $k'$ of dimension equal to
\[
	\dim_{k'}H^0((f_0^*T_\cX)(-n)) = d_n(\cY).
\]
\end{proof}

We are now able to prove \autoref{propComparingCohomologyInertiaToZerothJet}.

\begin{proof}[Proof of \autoref{propComparingCohomologyInertiaToZerothJet}]
For each $n = 1, \dots, \ell-1$, the map $\cY^\ell_{n+1} \to \cY^\ell_n$ is smooth and has constant relative dimension by \autoref{miracleFlatnessForStacks}, \autoref{propositionSubJetDimension}, and \autoref{propSubTruncationFiberComputation}. Then along with \autoref{propSubTruncationFiberComputation}, we may apply \autoref{theoremGysinIsIsomorphism} to each $\cY^\ell_{n+1} \to \cY^\ell_n$, and we get
\[
	H^{i+2(\dim\pi_\ell^{-1}(\cY) - \dim\cY)}_c(\pi_\ell^{-1}(\cY))(\dim\pi_\ell^{-1}(\cY) - \dim\cY) \cong H^i_c(\cY).
\]
Thus in order to get the desired result, we just need to show that $\dim\pi_\ell^{-1}(\cY) = \dim\cX$, but this follows from the fact that $\pi_\ell^{-1}(\cY)$ is a union of connected components of $\sJ^\ell_0(\cX)$, which has equidimension $\dim\cX$ by \autoref{theoremSmoothnessOfTruncation}.
\end{proof}

For the remainder of this section, let $\overline{\wt}_\cX: \bigsqcup_{\ell \in \Z_{>0}} |I_{\mu_\ell}(\cX)| \to \Q$ be as defined in \cite[Definition 4.17]{SatrianoUsatine6}, and for any $\ell \in \Z_{>0}$ and connected component $\cY$ of $I_{\mu_\ell}(\cX)$, set
\[
	w(\cY) = \dim\cX - (1/\ell)\sum_{a=1}^{\ell} a d_{-a}(\cY).
\]

\begin{remark}\label{remarkWeightFunctionAndWeightOfComponent}
It is immediate from the definitions that the function $\overline{\wt}_\cX$ takes value $w(\cY)$ on all of $|\cY|$.
\end{remark}

Before we can prove \autoref{theoremStringyIsOrbifold}, we will need the following lemma.

\begin{lemma}\label{lemmaShiftAndWeight}
Let $\cY$ be a connected component of $\bigsqcup_{\ell \in \Z_{>0}}I_{\mu_\ell}(\cX)$. Then
\[
	-\shft_\cX(\cY) = w(\cY) + \dim\cY - \dim\cX.
\]
\end{lemma}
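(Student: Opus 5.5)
The plan is to prove the identity by a direct computation at a single point of $\cY$. The main work is to express both sides in terms of the weights $a_1, \dots, a_{\dim\cX}$ of the $\mu_\ell$-action on $f^*T_\cX$. Fix $\ell$ with $\cY \subset I_{\mu_\ell}(\cX)$, a field $k'$, and a point $f: B\mu_\ell \otimes_k k' \to \cX$ of $\cY$. Choose $a_i \in \{1, \dots, \ell\}$ as in the definition of $\shft_\cX$, so that $f^*T_\cX \cong \bigoplus_i \cO(a_i)$ and
\[
	-\shft_\cX(\cY) = -\dim\cX + (1/\ell)\sum_{i} a_i.
\]

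First I will compute the numbers $d_{-a}(\cY) = \dim_{k'} H^0((f^*T_\cX)(a))$ for $a \in \{1, \dots, \ell\}$. I will use the same convention $\cO(b)(c) = \cO(b+c)$ and the fact that $H^0(\cO_{B\mu_\ell \otimes k'}(b))$ is one-dimensional if $\ell \mid b$ and zero otherwise. This is the $n=1$ case of the computation in the proof of \autoref{propositionSubJetDimension}. It gives that $d_{-a}(\cY)$ is the number of $i$ with $a_i \equiv -a \pmod \ell$. For each $i$ there is then exactly one $a \in \{1,\dots,\ell\}$ counting it: $a = \ell - a_i$ if $a_i < \ell$, and $a = \ell$ if $a_i = \ell$. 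Hence
\[
	\sum_{a=1}^{\ell} a\, d_{-a}(\cY) = \sum_{a_i < \ell} (\ell - a_i) + \ell\cdot\#\{i : a_i = \ell\} = \ell\dim\cX - \sum_i a_i + \ell\cdot\#\{i : a_i = \ell\}.
\]

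Second, I need $\dim\cY = \#\{i : a_i = \ell\} = d_0(\cY)$, i.e.\ the dimension of the $\mu_\ell$-invariant part of $f^*T_\cX$. I would get this from \autoref{propositionSubJetDimension} with $n = 1$: $\cY^\ell_1 = \cY$ is smooth of equidimension $d_0(\cY)$. Alternatively, one can use the standard fact that the tangent space to the cyclotomic inertia of a smooth DM stack at $f$ is the fixed subspace of the tangent space.

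Substituting both facts into $w(\cY) + \dim\cY - \dim\cX$ gives
\[
	\dim\cX - \dim\cX + (1/\ell)\sum_i a_i - d_0(\cY) + d_0(\cY) - \dim\cX = -\dim\cX + (1/\ell)\sum_i a_i,
\]
which equals $-\shft_\cX(\cY)$. The only real obstacle is bookkeeping of sign conventions. I must make sure that the twist $(f^*T_\cX)(-a)$ in the definition of $d_a$, with $a$ replaced by $-a$, matches the convention for $\cO(a_i)$ used in defining $\shft_\cX$, so that $d_{-a}$ counts $a_i \equiv -a$ rather than $a_i \equiv a$. I would verify this against \autoref{YasudaShiftConvention} and the description of $H^0(\cO(a))$ via the $\mu_\ell$-invariants of $k'[t^{1/\ell}]$ used in \autoref{truncatedTwistedDiscRankDimensionGlobalSections}.
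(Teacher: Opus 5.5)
Your proposal is correct and is essentially the paper's argument. Both compute the two sides at a single point in terms of the $\mu_\ell$-weights on $f^*T_\cX$, take $\dim\cY = d_0(\cY)$ from \autoref{propositionSubJetDimension} with $n=1$, and your sign convention ($d_{-a}(\cY)$ counts the $i$ with $a_i \equiv -a \pmod{\ell}$) is the right one. The only cosmetic difference is that you sum over the individual weights $a_i$, whereas the paper writes $f^*T_\cX \cong \bigoplus_{a=1}^{\ell}\cO(a)^{d_a(\cY)}$ and reindexes using $d_{-a} = d_{\ell-a}$, $d_0 = d_\ell$, and $\sum_{b=0}^{\ell-1} d_b(\cY) = \dim\cX$.
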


\begin{proof}
Let $\ell \in \Z_{>0}$ be such that $\cY$ is a connected component of $I_{\mu_\ell}(\cX)$. If $k'$ is a field extension of $k$ and $f: B\mu_\ell \otimes_k k' \to \cX$ corresponds to a $k'$-point of $\cY$, then
\[
	f^*T_\cX \cong \bigoplus_{a = 1}^{\ell} \cO_{B\mu_\ell \otimes_k k'} (a)^{d_{a}(\cY)}.
\]
Therefore,
\[
	\shft_\cX(\cY) = \dim\cX - (1/\ell)\sum_{a = 1}^\ell a d_{a}(\cY).
\]
On the other hand,
\begin{align*}
	w(\cY) + \dim\cY - \dim\cX &= \dim\cY - (1/\ell)\sum_{a=1}^\ell ad_{-a}(\cY)\\
	&= d_0(\cY) - (1/\ell)\sum_{a=1}^\ell ad_{-a}(\cY)\\
	&= d_0(\cY) - (1/\ell)\sum_{b=0}^{\ell-1} (\ell - b)d_{b}(\cY)\\
	&= d_0(\cY) - \dim\cX + (1/\ell) \sum_{b=0}^{\ell - 1} b d_b(\cY)\\
	&= -\dim\cX + d_\ell(\cY) + (1/\ell)\sum_{b=1}^{\ell-1} b d_b(\cY)\\
	&= -\dim\cX + (1/\ell)\sum_{b=1}^{\ell} b d_b(\cY) = -\shft_\cX(\cY),
\end{align*}
where the second equality is by \autoref{propositionSubJetDimension}.
\end{proof}

Finally, we may prove \autoref{theoremStringyIsOrbifold}.

\begin{proof}[Proof of \autoref{theoremStringyIsOrbifold}]
Since $\cX$ is tame, $\cA_\cX = |\sJ(\cX)|$, so each $\cY^w_{\cX, n}$ is the open substack of $\sJ_n(\cX)$ supported on the preimage of $\overline{\wt}_\cX^{-1}(w)$ in $\sJ_n(\cX)$. Thus each $\cY^w_{\cX, n+1} \to \cY^w_{\cX, n}$ satisfies the hypotheses of \autoref{theoremGysinIsIsomorphism} by \autoref{theoremSmoothnessOfTruncation} and \cite[Proposition 4.11]{SatrianoUsatine6}, so \autoref{theoremGysinIsIsomorphism} gives that for each $i \in \Z$ and $w \in \Q$,
\[
	H^{i,w}_{\str}(\cX) \cong H^i_c(\cY^w_{\cX, 0}).
\]
For each $\ell \in \Z_{>0}$ and connected component $\cY$ of $I_{\mu_\ell}(\cX)$, let $\cY'$ denote the preimage of $\cY$ in $\sJ_0(\cX)$. By \autoref{remarkWeightFunctionAndWeightOfComponent},
\[
	\cY^w_{\cX, 0} = \bigsqcup_{\cY, w(\cY) = w} \cY',
\]
where $\cY$ varies over all connected components of $\bigsqcup_{\ell \in \Z_{>0}} I_{\mu_\ell}(\cX)$ with $w(\cY) = w$. Therefore for each $i \in \Q$,
\begin{align*}
	H^{\even, i}_{\str}(\cX) &\cong \bigoplus_{w \in \Q, \text{ $i+2w$ is an even integer}} H^{i+2w}_c(\cY^w_{\cX, 0})(w)\\
	&\cong \bigoplus_{w \in \Q, \text{ $i+2w$ is an even integer}} \left( \bigoplus_{\cY, w(\cY) = w} H^{i+2w}_c(\cY')(w) \right)\\
	&\cong \bigoplus_{\cY, \text{ $i+2w(\cY)$ is an even integer}} H^{i+2w(\cY)}_c(\cY')(w(\cY))\\
	&\cong \bigoplus_{\cY, \text{ $i+2w(\cY)$ is an even integer}} H^{i-2(\shft_\cX(\cY))}_c(\cY)(-\shft_\cX(\cY))\\
	&\cong H^{\even, i}_{\orb}(\cX),
\end{align*}
where the second-to-last isomorphism is by \autoref{propComparingCohomologyInertiaToZerothJet} and \autoref{lemmaShiftAndWeight}. Similarly,
\[
	H^{\odd, i}_{\str}(\cX) \cong H^{\odd, i}_{\orb}(\cX).
\]
Finally, this implies
\[
	H^i_{\str}(\cX) \cong H^i_{\orb}(\cX)
\]
by definition and
\[
	\HD_{\str}(\cX) = \HD_{\orb}(\cX)
\]
by definition and \autoref{theoremStringyCohomologyGivesStringyHD}.
\end{proof}

\bibliographystyle{alpha}
\bibliography{CISHN}

\begin{thebibliography}{{Eke}09}

\bibitem[Bat98]{Batyrev}
Victor~V. Batyrev.
\newblock Stringy {H}odge numbers of varieties with {G}orenstein canonical
  singularities.
\newblock In {\em Integrable systems and algebraic geometry ({K}obe/{K}yoto,
  1997)}, pages 1--32. World Sci. Publ., River Edge, NJ, 1998.

\bibitem[BD96]{BatyrevDais}
Victor~V. Batyrev and Dimitrios~I. Dais.
\newblock Strong {M}c{K}ay correspondence, string-theoretic {H}odge numbers and
  mirror symmetry.
\newblock {\em Topology}, 35(4):901--929, 1996.

\bibitem[BM13]{BatyrevMoreau}
Victor Batyrev and Anne Moreau.
\newblock The arc space of horospherical varieties and motivic integration.
\newblock {\em Compos. Math.}, 149(8):1327--1352, 2013.

\bibitem[Bor14]{Borisov}
Lev~A. Borisov.
\newblock On stringy cohomology spaces.
\newblock {\em Duke Math. J.}, 163(6):1105--1126, 2014.

\bibitem[CR04]{ChenRuan}
Weimin Chen and Yongbin Ruan.
\newblock A new cohomology theory of orbifold.
\newblock {\em Comm. Math. Phys.}, 248(1):1--31, 2004.

\bibitem[{Eke}09]{Ekedahl}
Torsten {Ekedahl}.
\newblock {The Grothendieck group of algebraic stacks}.
\newblock {\em arXiv e-prints}, page arXiv:0903.3143, March 2009.

\bibitem[ER21]{EdidinRydh}
Dan Edidin and David Rydh.
\newblock Canonical reduction of stabilizers for {A}rtin stacks with good
  moduli spaces.
\newblock {\em Duke Math. J.}, 170(5):827--880, 2021.

\bibitem[HR17]{HallRydh}
Jack Hall and David Rydh.
\newblock The telescope conjecture for algebraic stacks.
\newblock {\em J. Topol.}, 10(3):776--794, 2017.

\bibitem[Ley01]{Leykin}
Anton Leykin.
\newblock Constructibility of the set of polynomials with a fixed
  {B}ernstein-{S}ato polynomial: an algorithmic approach.
\newblock volume~32, pages 663--675. 2001.
\newblock Effective methods in rings of differential operators.

\bibitem[Ryd11]{Rydh}
David Rydh.
\newblock Representability of {H}ilbert schemes and {H}ilbert stacks of points.
\newblock {\em Comm. Algebra}, 39(7):2632--2646, 2011.

\bibitem[{Sta}18]{stacks-project}
The {Stacks Project Authors}.
\newblock \textit{Stacks Project}.
\newblock \url{https://stacks.math.columbia.edu}, 2018.

\bibitem[SU22]{SatrianoUsatine1}
Matthew Satriano and Jeremy Usatine.
\newblock Stringy invariants and toric {A}rtin stacks.
\newblock {\em Forum Math. Sigma}, 10:Paper No. e9, 60, 2022.

\bibitem[SU23a]{SatrianoUsatine4}
Matthew {Satriano} and Jeremy {Usatine}.
\newblock {Beyond twisted maps: applications to motivic integration}.
\newblock {\em arXiv e-prints}, page arXiv:2309.11434, September 2023.

\bibitem[SU23b]{SatrianoUsatine5}
Matthew {Satriano} and Jeremy {Usatine}.
\newblock {Motivic integration for singular Artin stacks}.
\newblock {\em arXiv e-prints}, page arXiv:2309.11442, September 2023.

\bibitem[SU24a]{SatrianoUsatine3}
Matthew Satriano and Jeremy Usatine.
\newblock Crepant resolutions of log-terminal singularities via {A}rtin stacks.
\newblock {\em Duke Math. J.}, 173(18):3605--3646, 2024.

\bibitem[SU24b]{SatrianoUsatine6}
Matthew {Satriano} and Jeremy {Usatine}.
\newblock {Stringy Hodge numbers via crepant resolutions by Artin stacks}.
\newblock {\em arXiv e-prints}, page arXiv:2410.23951, October 2024.

\bibitem[SV07]{SchepersVeys}
Jan Schepers and Willem Veys.
\newblock Stringy {H}odge numbers for a class of isolated singularities and for
  threefolds.
\newblock {\em Int. Math. Res. Not. IMRN}, (2):Art. ID rnm016, 14, 2007.

\bibitem[Tub25a]{Tubach2}
Swann Tubach.
\newblock Mixed {H}odge modules on stacks.
\newblock {\em Forum Math. Sigma}, 13:Paper No. e175, 29, 2025.

\bibitem[Tub25b]{Tubach1}
Swann Tubach.
\newblock On the {N}ori and {H}odge realisations of {V}oevodsky motives.
\newblock {\em Compos. Math.}, 161(9):2155--2201, 2025.

\bibitem[Vak25]{Vakil}
Ravi Vakil.
\newblock {\em The rising sea---foundations of algebraic geometry}.
\newblock Princeton University Press, Princeton, NJ, [2025] \copyright 2025.

\bibitem[VdB23]{vandenBergh}
Michel Van~den Bergh.
\newblock Noncommutative crepant resolutions, an overview.
\newblock In {\em I{CM}---{I}nternational {C}ongress of {M}athematicians.
  {V}ol. 2. {P}lenary lectures}, pages 1354--1391. EMS Press, Berlin, [2023]
  \copyright 2023.

\bibitem[Yas04]{Yasuda2004}
Takehiko Yasuda.
\newblock Twisted jets, motivic measures and orbifold cohomology.
\newblock {\em Compos. Math.}, 140(2):396--422, 2004.

\bibitem[Yas06]{Yasuda}
Takehiko Yasuda.
\newblock Motivic integration over {D}eligne-{M}umford stacks.
\newblock {\em Adv. Math.}, 207(2):707--761, 2006.

\end{thebibliography}

\end{document}